\newcommand{\Int}[1]{%
  {\kern0pt#1}^{\mathrm{o}}%
}
\newcommand{\bp}{\begin{pmatrix}}
\newcommand{\ep}{\end{pmatrix}}
\newcommand{\arrow}{\longrightarrow}
\newcommand{\sauceremph}[1]{\textbf{#1}}
\newcommand{\volthick}[2]{\vol^{#1}\prn{#2}}
\newcommand{\volsolid}[2]{\vol^{#1}\prn{#2}}
\newcommand{\volsphere}[2]{\vol^{#1}\prn{#2}}
\newcommand{\Dthick}[2]{D^{#1}\prn{#2}}
\newcommand{\Dsolid}[2]{D^{#1}\prn{#2}}
\newcommand{\Dsphere}[2]{D^{#1}\prn{#2}}
\newcommand{\oct}{\text{oct}}
\newcommand{\tet}{\text{tet}}
\def\Wknot{W}
\def\Mknot{U}
\newcommand{\Wknoti}[1]{W_{#1}}
\newcommand{\Mknoti}[1]{U_{#1}}
\def\off{\Delta}
\newtheorem{construction}[theorem]{Construction}
\theoremstyle{remark}
\newtheorem{remark}[theorem]{Remark}
\newtheorem{warning}[theorem]{Warning}
\title{Lower Bounds on Volumes of Hyperbolic 3-Manifolds via Decomposition}
\author[C. Adams]{Colin Adams}
\address{Department of Mathematics and Statistics, 18 Hoxsey St., Williams College, USA}
\email{cadams@williams.edu}
\author[M. Capovilla-Searle]{Michele Capovilla-Searle}
\address{Department of Mathematics, University of Iowa, 14 MacLean Hall, Iowa City, Iowa 52242-1419}
\email{mcapovillasearle@uiowa.edu}
\author[D. Li]{Darin Li}
\address{4592 Terra Pl., San Jose, CA95121}
\email{darinli@gmail.com}
\author[L. Li]{Lily Qiao Li}
\address{Department of Mathematics, University of California, Berkeley, 970 Evans Hall, Berkeley, CA 94720-3840}
\email{lilyli@berkeley.edu}
\author[J. McErlean]{Jacob McErlean}
\address{Department of Mathematics, Duke University, 120 Science Dr, Durham, NC 27708-0320}
\email{jacob.mcerlean@duke.edu}
\author[A. Simons]{Alexander Simons}
\address{6903 Rosemont Drive, McLean VA 22101}
\email{ads4@williams.edu}
\author[N. Stewart]{Natalie Stewart}
\address{Mathematics Department, Harvard University, 1 Oxford St, Cambridge, MA 02138}
\email{nataliestewart@math.harvard.edu}
\author[X. Wang]{Xiwen Wang}
\address{Room 501, No. 3, Lane 555, Guangyan Road, Shanghai, China 200072}
\email{xiwenw2@gmail.com}
\begin{document}

\begin{abstract}In a variety of settings we provide a method for decomposing a 3-manifold $M$ into pieces. When the pieces have the appropriate type of hyperbolicity, then the manifold $M$ is hyperbolic and its volume is bounded below by the sum of the appropriately defined hyperbolic volumes of the pieces. A variety of examples of appropriately hyperbolic pieces and volumes are provided, with many examples from link complements in the 3-sphere.
\end{abstract}

\maketitle

\section{Introduction}\label{introduction section}

Throughout this paper, $M$ is either a compact 3-manifold or a compact manifold with a link removed.
We assume we are in the piecewise-linear category, and all manifolds may have boundary unless specified to be closed. When a manifold is said to be hyperbolic, then there are no spherical or projective plane boundaries, and there is a hyperbolic metric on the manifold obtained by removing torus boundaries and Klein bottle boundaries, with finite hyperbolic volume. 
When there are additional higher genus boundaries, for hyperbolicity, we further assume that they are realized as totally geodesic surfaces in the hyperbolic metric on the manifold. This  is called tg-hyperbolicity. We do not assume that surfaces in $M$ are connected.

When a 3-manifold possesses a hyperbolic metric, we can use the geometric invariants from hyperbolicity to understand the 3-manifold. In the case that the 3-manifold has a hyperbolic metric of finite hyperbolic volume, the Mostow-Prasad Rigidity Theorem says that this metric, and hence its volume, is a topological invariant.
A great amount of research has been devoted to approximating the volume of particular families of hyperbolic 3-manifolds. Much of this work has focused on link complements in the 3-sphere $S^3$.

For instance, in \cite{Lackenby}, it was proved that if $L$ is an alternating link in $S^3$ that is hyperbolic, then \[v_{\oct} (t(D) -2)/2 < \vol(S^3\setminus L )< 10 v_{\tet} (t(D) -1)\] where $t(D)$ is the number of twist regions in a twist-reduced alternating diagram of $L$, $v_{\oct}$ is the volume of an ideal regular octahedron, approximately 3.6638, and $v_{\tet}$ is the volume of an ideal regular tetrahedron, approximately 1.0149. Additional lower bounds for hyperbolic alternating link volumes appear in \cite{CKP2}.

If we restrict to  hyperbolic Montesinos links,  lower bounds for volumes that depend on the combinatorics of the link diagram are provided in \cite{FP} and \cite{FKP}. Other categories of links with lower bounds include prime link diagrams with twist-reduced diagrams of twist number at least two and all twist regions containing at least seven crossings \cite{FKP2}, homogeneously adequate links \cite{FKP3}, and positive braids \cite{FKP, Giambrone}.

However, in all cases, obtaining  accurate lower bounds on volume has proved elusive. In this paper, given a link in $S^3$ that can be appropriately decomposed along surfaces into tangles, we define certain types of hyperbolicity for the tangles which, when satisfied, provide an associated volume for each tangle.
Using results from \cite{Agol-Storm-Thurston}, as generalized in \cite{CFW}, we can then show that links obtained from the tangles are themselves hyperbolic. 
Further,  we obtain surprisingly accurate lower bounds on the volumes of  the links from the sum of the volumes of the constituent tangles. However, unlike the previous lower bounds that depend only on the combinatorics of the diagrams, this  method does assume that the volumes of the constituent tangles are known. We provide examples of the various volumes of tangles in the last section. 

These results for links in $S^3$ are particular cases of a much more general theorem that shows that if there is an appropriate decomposition of a 3-manifold into pieces, each of which is hyperbolic in the appropriate way, the manifold is hyperbolic with volume at least as large as the sum of the associated volumes of the pieces, versions of which we prove in Sections 2 and 3. In the rest of the introduction, we focus on the application to links in $S^3$.

For our purposes, a tangle is  a properly embedded disjoint union of $m$ arcs and potentially some additional circles in a ball.
These are sometimes called $2m$-tangles in the literature.

The first situation applies to a link $L$ in the open thickened torus $T \times (0,1)$, which we can think of as the complement of the Hopf link in $S^3$. 
Suppose $L$ decomposes into $n$ tangles $(\cT_i)_{i=1}^n$, arranged in a cycle that follows the core of $T \times (0,1)$ as pictured in Figure \ref{Torus cycle figure}(a). There may be an arbitrary nonzero number of connecting strands between any two adjacent tangles.

\begin{figure}
    \centering
    \includegraphics[width=.6\textwidth]{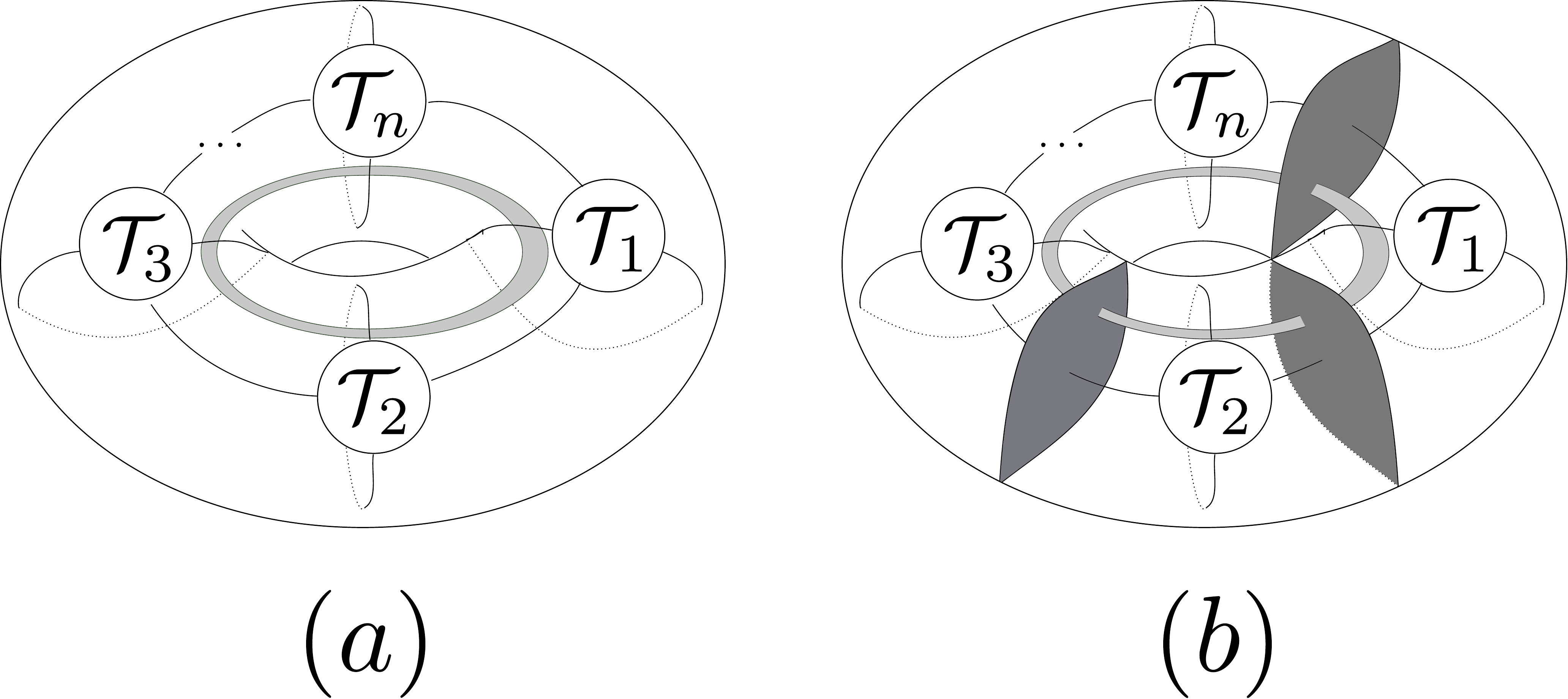}
    \caption{Part (a) shows a link decomposing into a cycle of tangles in $T \times (0,1)$.
    Each node is replaced with a tangle.
    Part (b) shows surfaces that decompose the link into tangles.
    }
    \label{Torus cycle figure}
\end{figure}

We may construct a surface $\Sigma \subset M \setminus L$ by taking the disjoint union of a collection of multiply-punctured annuli, each corresponding to a meridianal annulus between adjacent tangles as shown in Figure \ref{Torus cycle figure}(b). 
Cutting the manifold open along $\Sigma$ yields the disjoint union of the individual tangles in thickened cylinders ending at the ``boundary'' meridianal annuli.
Since there is a homeomorphism of $T \times (0,1)$ taking meridians to $(p,q)$-curves, we could have just as easily cut $L$ along a collection of annuli which bound $(p,q)$ curves on each component of $T \times \partial I$. 

We refer to such a tangle as a \emph{thickened-cylinder tangle}, and we associate with it a link in a thickened torus called its \emph{double} or \emph{2-replicant} $\Dthick{2}{\cT_i}$, formed by doubling across the image of $\Sigma$.
If this link is hyperbolic in $T \times (0,1)$, we say that $\cT_i$ is \emph{2-hyperbolic}, and define the 2-volume $\volsolid{2}{\cT_i}$ to be half of the volume of this double.

We prove that if   $L \subset T \times (0,1)$ is a link in a thickened torus such that there exist surfaces separating $L$ into thickened-cylinder tangles $\cT_i$ and each $\cT_i$ is 2-hyperbolic,
    then $L$ is hyperbolic, and its volume satisfies
    \[
        \vol(L) \geq \sum_i \volsolid{2}{\cT_i}.
    \]

If a given thickened-cylinder tangle can be further decomposed into a cycle, we may ask about hyperbolicity of the individual tangles in cubes, called \emph{cubical tangles}.
An example of this is illustrated in Figure \ref{thickened torus example}.

A cubical tangle $\cT_{ij}$ doubles to a thickened-cylinder tangle.
We say that a cubical tangle is \emph{$(2,2)$-hyperbolic} if the double of its double $\Dthick{(2,2)}{\cT_{ij}}$ is a hyperbolic link, in which case we define the \emph{$(2,2)$-volume}
\[
    \volthick{(2,2)}{\cT_{ij}} := \frac{1}{4} \vol\prn{\Dthick{(2,2)}{\cT_{ij}}}
\]

In fact, given a decomposition of a fixed thickened-cylinder tangle $\cT_i$ into cubical tangles $\cT_{ij}$, the double of the thickened-cylinder tangle can be realized as a cycle of thickened-cylinder tangles, each of which is doubled from some $\cT_{ij}$.
Then, if  $\cT_i$ is  a thickened-cylinder tangle, and there are  surfaces separating $\cT_i$ into a cycle $\cT_{ij}$ of cubical tangles and each $\cT_{ij}$ is $(2,2)$-hyperbolic,
    then, $\cT_i$ is $2$-hyperbolic, with volume satisfying
    \[
        \volthick{2}{\cT_i} \geq \sum_j \volthick{(2,2)}{\cT_{ij}}.
    \]
    Moreover, if a link $L$ in a thickened torus decomposes into cubical tangles such that each $\cT_{ij}$ is $(2,2)$-hyperbolic, then $L$ is hyperbolic with volume satisfying
    \[
        \vol(L) \geq \sum_{ij} \volthick{(2,2)}{\cT_{ij}}.
    \]

In a future paper \cite{selectalternating}, we will give a large class of $2$-hyperbolic thickened-cylinder tangles, called \emph{select alternating thickened cylinder tangles} and a class of $(2,2)$-hyperbolic cubical tangles called \emph{select alternating cubical tangles}.
As a corollary to the second of those, it is shown that all rational tangles in a cube  with one endpoint on each edge of the square, other than the integer tangles 0, $\pm 1$, and $\infty$,  are (2,2)-hyperbolic cubical tangles.

\begin{figure}[htbp]
    \centering
    \includegraphics[scale=.15]{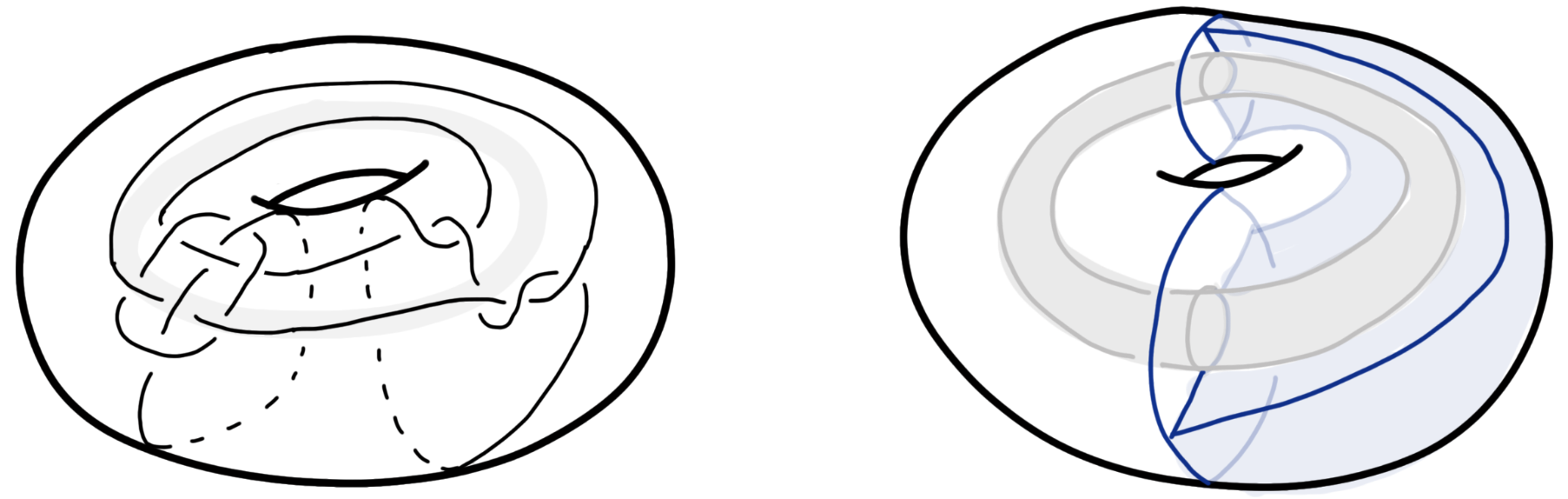}
    \caption{A link $L$ in $T \times I$ and a decomposition of $T \times I$ into a thickened cylinder  and two cubes corresponding to the tangles. One of the cubes is shaded.}
    \label{thickened torus example}
\end{figure}

\begin{example} Consider the link $L$ in $T \times (0,1)$ shown in Figure \ref{thickened torus example}. We can separate $L$ along a surface into a thickened-cylinder tangle $\cT_1$ on the left, and two cubical tangles $\cT_{21}$ and $\cT_{22}$ on the right. By \cite{selectalternating}, we know $\cT_1$ is 2-hyperbolic, while $\cT_{21}$ and $\cT_{22}$ are both $(2,2)$-hyperbolic. Thus, by Theorem \ref{Torus theorem} and Theorem \ref{Square tangles theorem}, $\mbox{vol}^2_{T \times I}(\cT_1) + \mbox{vol}^{(2,2)}_{T \times I}(\cT_{21}) + \mbox{vol}^{(2,2)}_{T \times I}(\cT_{22})$ gives a lower bound of 34.7259 on the volume of $L$. The actual volume is 37.8023.
\end{example}

The next result applies to a link $L$ in $S^3$ constructed as a cycle of $2n$ tangles, which we call a {\it bracelet link}. We define the length of the bracelet link to be $2n$.
For the case $n=2$, this is pictured in Figure \ref{braceletlink}. Each tangle is connected to the subsequent tangle in the cycle by two or more strands. For example,  a Montesinos link is an example of a bracelet link, where the individual tangles are rational tangles and there are exactly two strands between adjacent tangles (absorbing the additional twist allowed into one of the tangles). If the number of tangles used in the construction of the Montesinos link is odd, we can treat one adjacent pair of tangles as a single tangle so that the number of tangles is even.

\begin{figure}[htbp]
    \centering
    \includegraphics[scale=1.2]{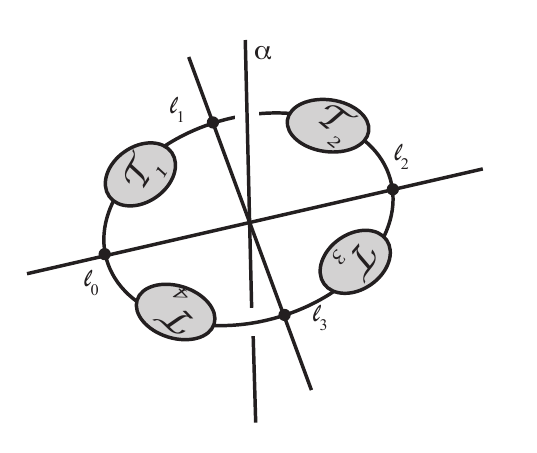}
    \caption{A bracelet link of length 4.}
    \label{braceletlink}
\end{figure}

 Given a bracelet link, we cut the 3-sphere into ``flying" \emph{saucers} $S_1, \dots, S_{2n}$ obtained by taking $n$ spheres all sharing the circle $\alpha$ in $S^3$ corresponding to the vertical axis at the center of the link and each cutting through the strands connecting two adjacent tangles. The result is $2n$ tangles, each in a  unique saucer, the boundary of which is two bowed disks sharing the central circle. The endpoints of the tangle lie on the two disks. We call the $i$th saucer $S_i$ and its tangle $\cT_i$, as in Figure \ref{saucer}. For convenience, in certain situations we let $\cT_i$ represent both the tangle and its saucer. We call the two punctured disks that make up the boundary of the saucer the \emph{faces} of the saucer. 
  Let $\ell_i$ be the number of stands connecting $\cT_i$ and $\cT_{i+1}$, and 
 $\ell_{0}$ the number of strands connecting $\cT_{2n}$ with $\cT_1$. Note that $\ell_i \geq 2$. 
 
 \begin{figure}[htbp]
    \centering
    \includegraphics[scale=.7]{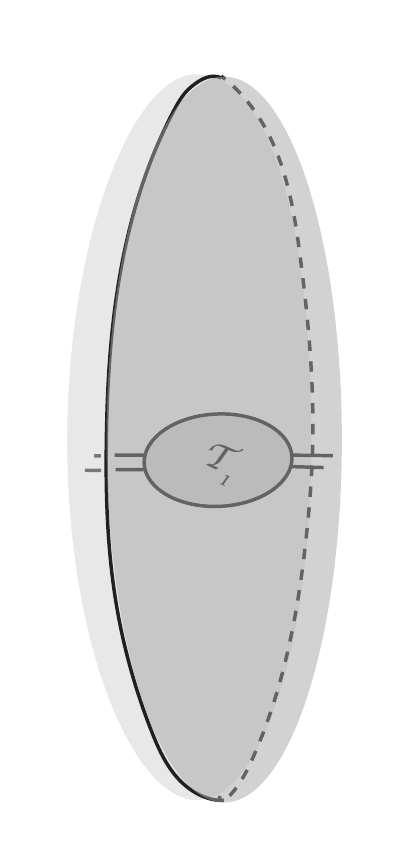}
    \caption{A saucer and tangle.}
    \label{saucer}
\end{figure}

We define the \emph{$2n$-replicant} $\Dsphere{2n}{\cT_i}$ of $\cT_i$ to be the bracelet link formed by the cycle $(\cT_i,\cT_i^R,\cT_i, \dots,\cT_i^R)$ of $2n$ tangles where $\cT_i^R$ is the ``reflected tangle'' obtained by reflecting the saucer for $\cT_i$ through the face containing the endpoints to one side of $\cT_i$ and perpendicular to the strands.
So $\cT_i$ has $\ell_{i-1}$ endpoints on the first face and $\ell_i$ on the second, whereas $\cT_i^R$ has $\ell_i$ on the first face and $\ell_{i-1}$ on the second. See Figure \ref{braceletreflection}.

\begin{figure}[htbp]
    \centering
    \includegraphics[scale=1.2]{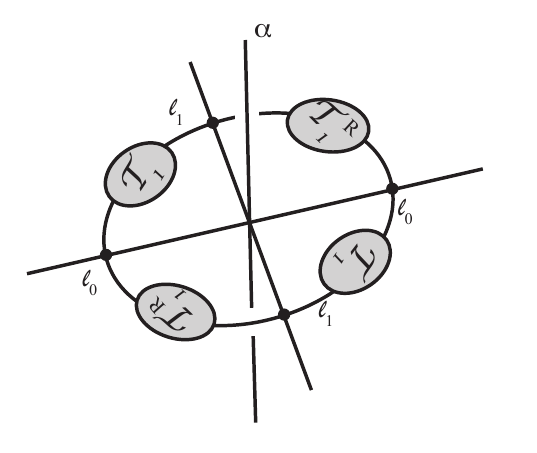}
    \caption{The $2n$-replica $\Dsphere{2n}{\cT_i}$ of $\cT_i$.}
    \label{braceletreflection}
\end{figure}

 We say that $\cT_i$ is \emph{$2n$-hyperbolic} if $\Dsphere{2n}{\cT_i}$ is hyperbolic, in which case we define
\[
    \volsphere{2n}{\cT_i} := \frac{\vol\prn{\Dsphere{2n}{\cT_i}}}{2n}.
\] 

We prove that if $L$ is a bracelet link made of a cycle $(\cT_i)_{i=1}^{2n}$ of $2n$ saucer tangles such that each $\cT_i$ is $2n$-hyperbolic, then $L$ is hyperbolic and the volumes satisfy
    \[
        \vol(L) \geq \sum_i \volsphere{2n}{\cT_i}.
    \]

\begin{figure}[htbp]
    \centering
    \includegraphics[scale=.5]{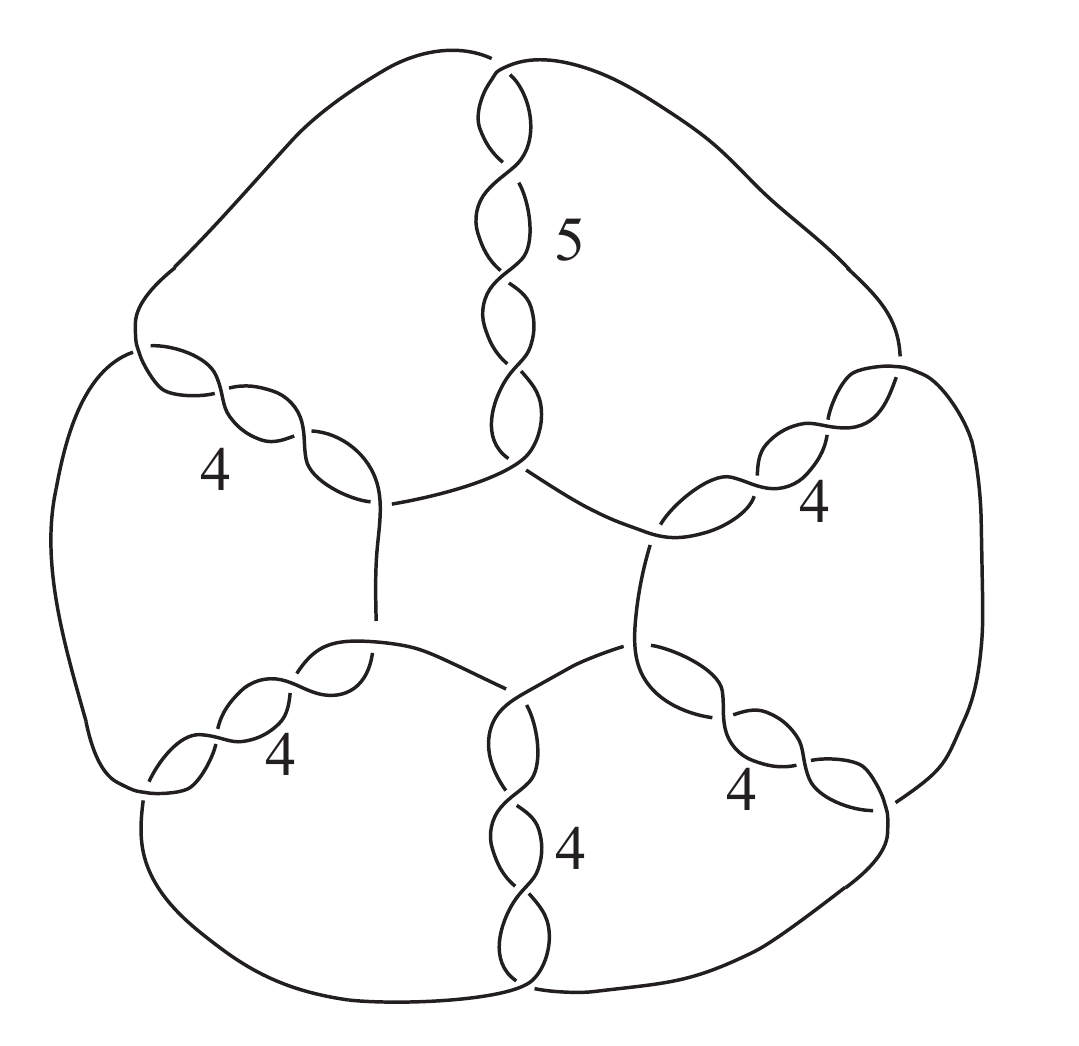}
    \caption{A bracelet link.}
    \label{pretzellink}
\end{figure}

\begin{example}
To see an explicit example, consider the length six bracelet link projection of Figure \ref{pretzellink}. The SnapPy computer program \cite{SnapPy} yields a volume of approximately 32.9819. 
The link is prime, alternating and twist-reduced with twist number 6, so Lackenby's lower  bound on volume applies and yields $2v_{\oct} \approx 7.3276.$ The lower bound that comes from \cite{FP} and \cite{FKP} is $3v_{\oct} \approx 10.9914.$ Other known bounds are lower than these. To apply our theorem, we split the link into six saucer tangles along the union of three spheres all intersecting at the central circle, five of the tangles of which are vertical integer 4 tangles and one of which is a vertical integer 5 tangle. Summing $\volsphere{6}{\cT_i}$ for $i = 1, \dots, 6$, we obtain a lower bound of 32.7858.  

The theorem also implies that if we alter this bracelet link by replacing some of the tangles by their reflections, so that the resulting link is non-alternating, it will still be hyperbolic and the same lower bound on volume applies. All of the volumes for this collection of links are lower than the volume of the alternating bracelet link with which we started and the least volume is realized for the links where half of the tangles have been reflected. The corresponding volume for these is approximately 32.7926. Thus, in this case, the lower bound is within .01 of the actual volume.
\end{example}

 
 In certain situations, we  can further decompose the saucer tangles. A tangle in a tetrahedron such that all endpoints are on the cross-sectional square as in Figure \ref{tetrahedraltangle}, with at least one endpoint on each edge of the square and none at the corners, is called a {\it tetrahedral tangle.}

 A saucer tangle can sometimes be decomposed into tetrahedral tangles. In particular, as in Figure \ref{toruslattice}, certain links form an $(m,n)$-rectangular lattice of tangles on an unknotted torus $T$. In this case, the 3-sphere can be decomposed into $mn$ tetrahedra, each containing a tetrahedral tangle. For each tetrahedron, one edge lies on the core curve of the solid torus to the inside of $T$ and is shared by $n$ other tetrahedra, and one edge lies on the core curve of the outside solid torus,  and is shared by $m$ tetrahedra. The remaining four edges of each tetrahedron are each  shared by four others. In particular, on the standardly embedded torus, there are $m$ columns and $n$ rows of tangles.

 \begin{figure}[htbp]
    \centering
    \includegraphics[scale=.7]{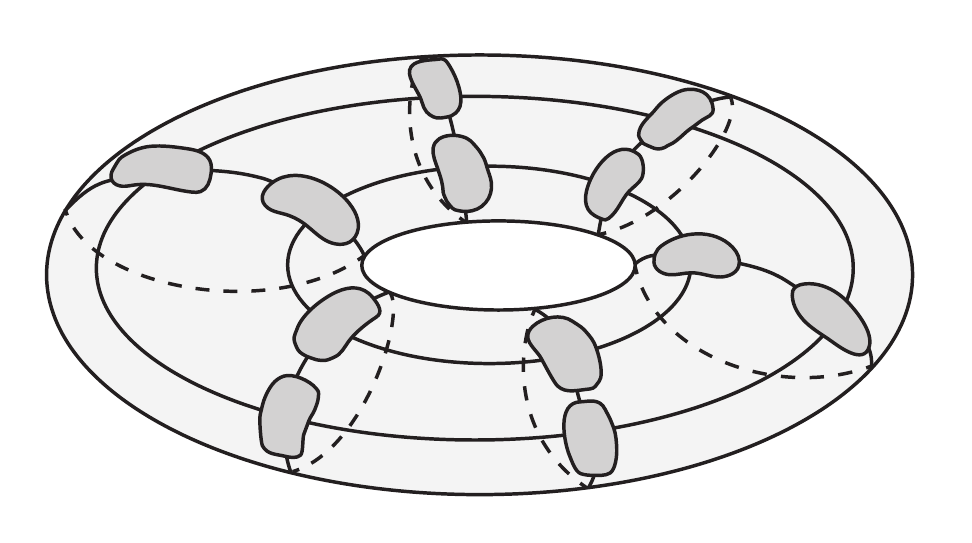}
    \caption{A link given as a torus lattice in $S^3$.}
    \label{toruslattice}
\end{figure}

\begin{figure}[htbp]
  \begin{subfigure}[b]{0.2\textwidth}
    \includegraphics[width=\textwidth]{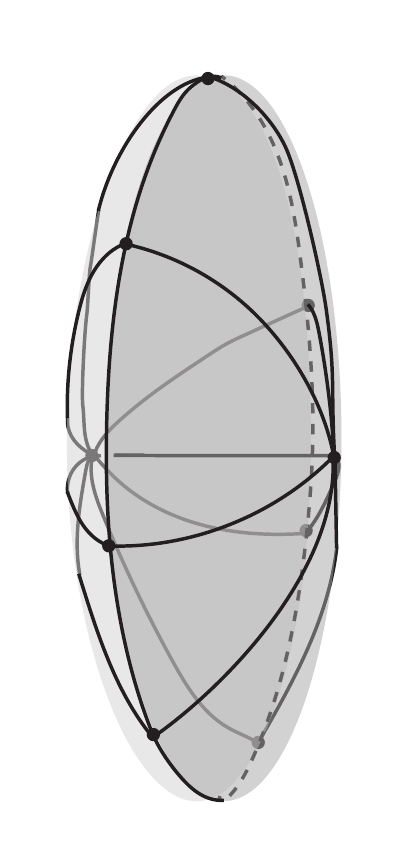}
    \caption{Decomposing a saucer into tetrahedra.}
    \label{fig:f1}
  \end{subfigure}
  \begin{subfigure}[b]{0.2\textwidth}
    \includegraphics[width=\textwidth]{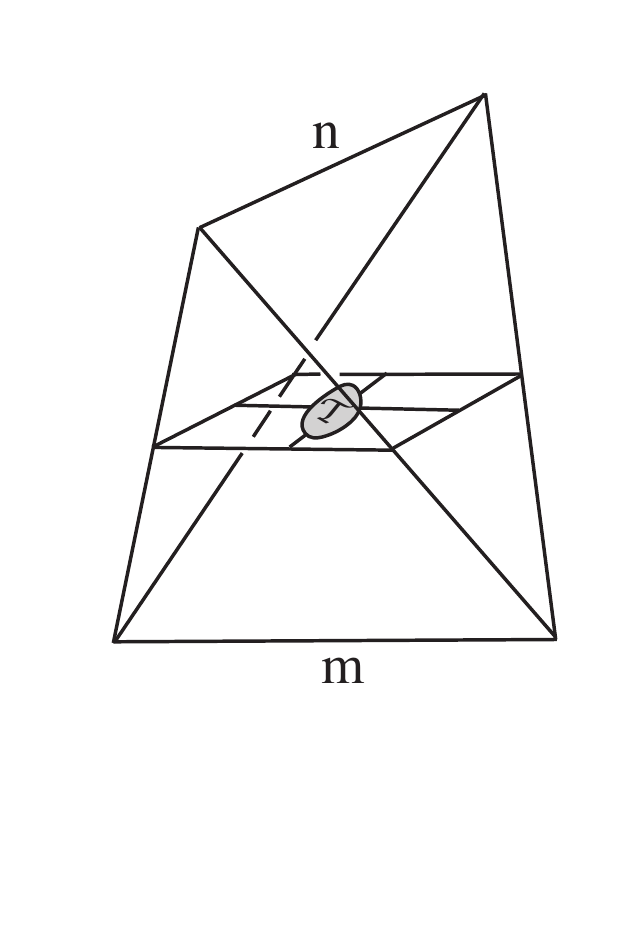}
    \caption{A tetrahedral tangle.}
    \label{tetrahedraltangle}
  \end{subfigure}
  \caption{Decomposing saucer tangles into tetrahedral tangles.}
  \label{suacertet2}
\end{figure}


Given a tetrahedral tangle $\cT_i$, we can form the $(2m, 2n)$-replicant, denoted $D^{(2m, 2n)}(\cT_i)$ by creating a $2m \times 2n$ torus lattice link with copies of $\cT_i$ replacing each vertex, such that two connected tangles are reflections of one another.

We say that a tetrahedral tangle $\cT_i$ is \emph{$(2m, 2n)$-hyperbolic} if $\Dsphere{(2m,2n)}{\cT_i}$ is hyperbolic, in which case we define
\[
    \volsphere{(2m,2n)}{\cT_i} := \frac{\vol\prn{\Dsphere{2n}{\cT_i}}}{4mn}.
\]

We show that if  $L$ is a $2m \times 2n$ torus lattice link made of a collection $\{\cT_{i,j}\}_{i=1}^{2n}$ of $4mn$ tetrahedral tangles such that each $\cT_{i,j}$ is $(2m,2n)$-hyperbolic, then, $L$ is hyperbolic and the volumes satisfy
    \[
        \vol(L) \geq \sum_{i,j} \volsphere{(2m,2n)}{\cT_{i,j}}.\]

In fact, it is not necessary that every saucer tangle decompose into tetrahedral tangles. If even a subset of them do, we obtain a similar result. 

\begin{example} \label{tettangleexample} The tetrahedral tangle $\cT$ in its projection to the cross-sectional square in Figure \ref{tetrahedralbigon}(a) is $(2m,2n)$-hyperbolic for all $m,n \geq 1$; this can be checked for $m = n = 1$ explicitly, and extended to arbitrary $(2m,2n)$ by Theorem \ref{Saucer hyperbolicity theorem}.
Thus any $(2m, 2n)$-torus lattice link in $S^3$ constructed from copies of just this tangle is hyperbolic and has volume at least $4mn \volsphere{(2m,2n)}{\cT}$. For example, since $\volsphere{(2,2)}{\cT} = 3.1322306$, any $(2,2)$-torus lattice link in $S^3$ constructed by placing this tangle or its reflections over a vertical or horizontal line into the four slots in the (2,2)-lattice has volume at least 12.5289226. The lower bound on volume is realized by the link in Figure \ref{tetrahedralbigon}(b) obtained by placing the tangles in the slots so each is a reflection of the others across the separating surfaces, making those surfaces totally geodesic. This is in fact the $(2,2)$-replicant of the tangle.

However, caution must be exercised. If the tangle is rotated $90^\circ$ before being placed in the same tetrahedron or if its crossings are switched, it is no longer $(2,2)$-hyperbolic. And if that tangle is included as an option in the (2,2)-lattice, the link may no longer be hyperbolic and even if it is, the volume of the link is no longer necessarily bounded below by 12.5289226. 

If on the other hand, we considered this tangle as a tangle in a cube, it  and its reflections and all of their rotations are (2,2)-hyperbolic in the thickened torus.
\end{example}

\begin{figure}[htbp]
    \centering
    \includegraphics[scale=.7]{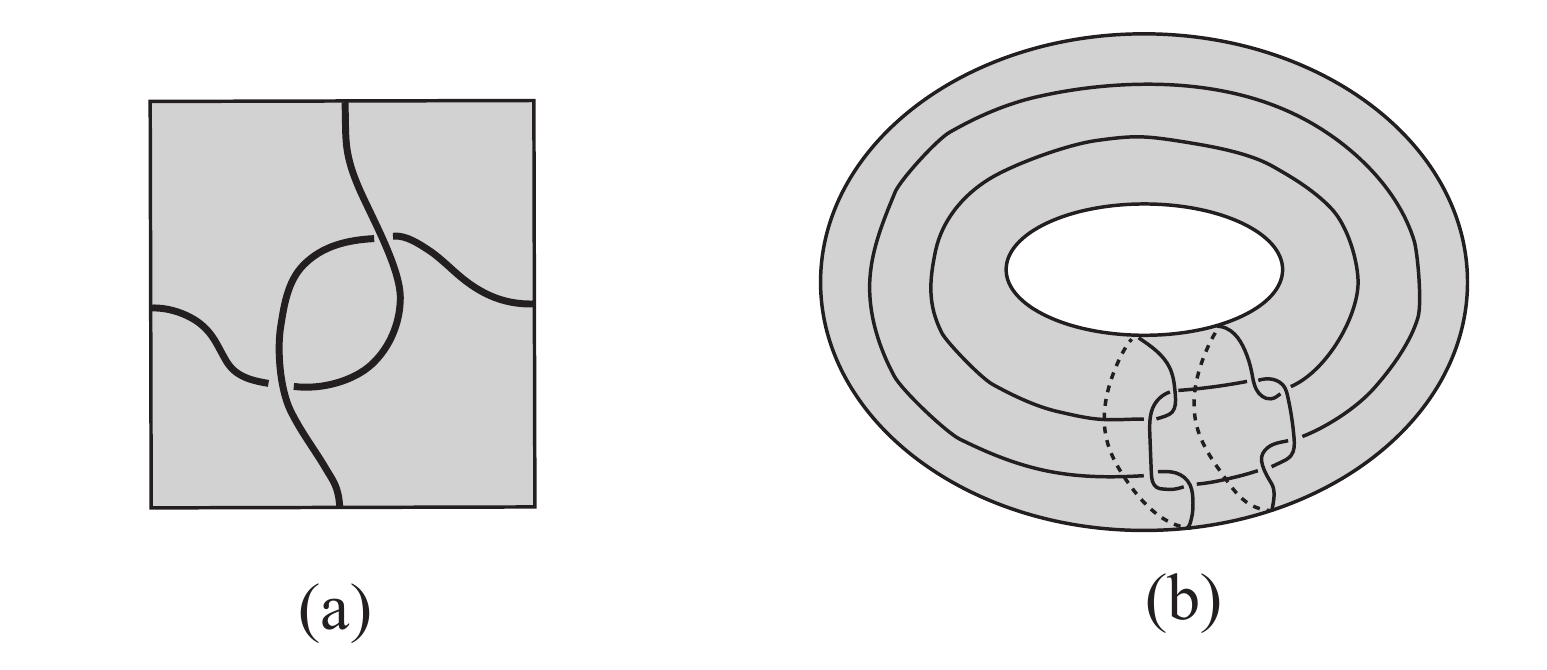}
    \caption{A 2-tangle and a (2,2)-lattice link generated by copies of it.}
    \label{tetrahedralbigon}
\end{figure}

Similar results for tangles that we have described in the thickened torus and in the 3-sphere also apply to links in an open solid torus, which we can think of as the complement of a single unknotted component, as well as to links in $S^2 \times S^1$.

The proofs of the main result and generalizations rely on the following theorems.
The first is well known \cite[Thm 1.10.15]{Klingenberg}.
\begin{theorem}\label{fixed surface lemma}
    Let $M$ be a Riemannian manifold and let $f:M \rightarrow M$ be a nontrivial isometry.
    Then, the subset $\Fix(f)$ is a union of embedded totally geodesic submanifolds.
\end{theorem}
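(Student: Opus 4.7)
The plan is to work locally around an arbitrary fixed point $p \in \Fix(f)$ and exhibit a neighborhood of $p$ in $\Fix(f)$ as the image under $\exp_p$ of a linear subspace of $T_pM$. Since $f(p) = p$, the differential $df_p : T_pM \to T_pM$ is a linear isometry of the inner product space $T_pM$, and I set $V_p := \ker(df_p - \id_{T_pM})$, the $1$-eigenspace.

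The key identity I would invoke is that isometries intertwine exponentials: for any $v \in T_pM$ for which $\exp_p(v)$ is defined,
\[
    f(\exp_p(v)) = \exp_{f(p)}(df_p(v)) = \exp_p(df_p(v)).
\]
Let $U \subset T_pM$ be a ball on which $\exp_p$ is a diffeomorphism onto a normal neighborhood $W$ of $p$ and small enough that $f(W) \cap W$ is contained in another such normal neighborhood. First, for $v \in V_p \cap U$ the identity above gives $f(\exp_p(v)) = \exp_p(v)$, so $\exp_p(V_p \cap U) \subset \Fix(f) \cap W$. Conversely, if $q = \exp_p(v) \in \Fix(f) \cap W$, then $\exp_p(df_p(v)) = f(q) = q = \exp_p(v)$, and injectivity of $\exp_p$ on $U$ forces $df_p(v) = v$, i.e.\ $v \in V_p$. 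Hence $\Fix(f) \cap W = \exp_p(V_p \cap U)$, which is a smoothly embedded submanifold of dimension $\dim V_p$. Since $\dim V_p$ can vary with $p$, globally $\Fix(f)$ decomposes as a disjoint union of such embedded submanifolds, one in each possible dimension.

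To see that each such submanifold is totally geodesic, note that any geodesic $\gamma$ of $M$ with $\gamma(0) = p$ and $\dot\gamma(0) = v \in V_p$ satisfies $\gamma(t) = \exp_p(tv)$ for small $t$, and the computation above shows $\gamma(t) \in \Fix(f)$ for all such $t$. Thus every geodesic of $M$ tangent to the submanifold at $p$ stays in it locally, which is the definition of totally geodesic.

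The only subtlety, and the part I would be most careful to address, is to make sure the local picture assembles into a genuine embedded submanifold structure in a neighborhood of $p$ in $\Fix(f)$ and not merely a set-theoretic coincidence: this requires choosing $U$ small enough that the normal neighborhood property holds and that $f$ maps $W$ into a single chart, so that the equality $\Fix(f) \cap W = \exp_p(V_p \cap U)$ is valid in the smooth category. Once this is done, the various components of $\Fix(f)$ grouped by the dimension of $V_p$ are embedded totally geodesic submanifolds, giving the desired conclusion.
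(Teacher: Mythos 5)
Your argument is correct and is the standard proof of this fact (essentially Kobayashi's); the paper itself gives no proof, simply citing the result from Klingenberg, so there is nothing internal to compare against. Two small points are worth making explicit: in the converse inclusion, to conclude $df_p(v) = v$ from $\exp_p(df_p(v)) = \exp_p(v)$ you need $df_p(v) \in U$, which holds because $df_p$ is a linear isometry and $U$ is a metric ball centered at the origin; and total geodesy must be verified at every point $q$ of the component, not only at $p$ --- but since every such $q$ is itself a fixed point, your local description applies verbatim at $q$, with the tangent space of the component at $q$ equal to $\ker(df_q - \id)$, so the argument closes.
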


The second is  due to Agol, Storm, and Thurston in  \cite[Thm. 9.1]{Agol-Storm-Thurston}, as  generalized by Calegari, Freedman and Walker in \cite[Thm. 5.5]{CFW}.
\begin{theorem}\label{Cutting and pasting theorem}
    Let $\overline{M}$ be a compact manifold with interior $M$ a hyperbolic 3-manifold of finite-volume.
    Let $\Sigma$ be a properly embedded two-sided totally geodesic surface, let $\psi:\Sigma \rightarrow \Sigma$ be a diffeomorphism, and let $M'$ be the manifold formed by surgering along $\Sigma$ and gluing the resulting pieces together via $\psi$.
        Then, $M'$ is a hyperbolic 3-manifold of finite volume, satisfying
        \[
            \vol(M') \geq \vol(M).
        \]
        Equality is attained if and only if $\Sigma$ is totally geodesic in $M'$.
\end{theorem}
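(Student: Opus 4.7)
The plan is to follow the Agol--Storm--Thurston strategy, extended to cusped finite-volume hyperbolic manifolds by Calegari--Freedman--Walker, whose engine is Perelman's monotonicity of the $\overline{\lambda}$-invariant combined with a minimal-surface realization of the gluing surface inside the putative hyperbolic structure on $M'$.

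First, I would establish that $M'$ admits a finite-volume hyperbolic metric. Because $\Sigma$ is totally geodesic in $M$, it is incompressible and $\pi_1$-injective; these properties survive the cut-and-paste operation, so the image of $\Sigma$ in $M'$ remains an incompressible surface. Together with irreducibility and atoroidality, which transfer across the gluing because $\Sigma$ is incompressible, Thurston--Perelman hyperbolization produces a finite-volume hyperbolic metric on $M'$. The absence of essential spheres, disks, or tori must be checked by a standard innermost-curve/homotopy argument exploiting the $\pi_1$-injectivity of $\Sigma$.

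Second, in this hyperbolic metric on $M'$, I would realize the image of $\Sigma$ as a least-area minimal surface $\Sigma^{\ast}$ in its isotopy class via Freedman--Hass--Scott or Hass--Schoen (taking care with the cusps). Cutting $M'$ along $\Sigma^{\ast}$ yields a hyperbolic manifold $N^{\ast}$ whose boundary has vanishing mean curvature, while cutting $M$ along $\Sigma$ yields a hyperbolic manifold $N$ with totally geodesic boundary. The pieces $N$ and $N^{\ast}$ are homeomorphic as compact 3-manifolds with boundary, so they carry two finite-volume hyperbolic metrics on the same underlying topological space whose only difference is boundary geometry.

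Third, I would invoke the key volume comparison from \cite{Agol-Storm-Thurston} as extended in \cite{CFW}: among finite-volume hyperbolic structures on a fixed compact 3-manifold whose boundary is mean-convex, the totally geodesic representative minimizes hyperbolic volume, with equality iff the boundary is totally geodesic. This gives $\vol(N) \leq \vol(N^{\ast})$; since cutting along a codimension-one surface does not alter volume, $\vol(N)=\vol(M)$ and $\vol(N^{\ast})=\vol(M')$, yielding the desired inequality. The rigidity clause forces equality only when $\Sigma^{\ast}$ is totally geodesic, i.e., when $\Sigma$ in $M'$ is already totally geodesic up to isotopy. The main obstacle is the comparison principle itself: its proof smooths the mean-convex metric on $N^{\ast}$ to a metric with scalar curvature $\geq -6$ whose volume is arbitrarily close to $\vol(N^{\ast})$, doubles along the smoothed boundary, and applies Perelman's $\overline{\lambda}$-monotonicity on the double to extract a lower bound on its hyperbolic volume via the $\sigma$-invariant. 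Controlling the non-compactness of the cusps when computing $\overline{\lambda}$ is the technical heart of the Calegari--Freedman--Walker extension, as is the rigidity upgrade needed for the equality statement.
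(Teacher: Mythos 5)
This statement is not proved in the paper: it is quoted verbatim as Theorem 9.1 of \cite{Agol-Storm-Thurston}, as generalized in Theorem 5.5 of \cite{CFW}, and the authors rely on it as an external input. Your outline correctly reconstructs the strategy of those cited proofs (hyperbolization of $M'$ via incompressibility of the glued surface, realization of $\Sigma$ by a least-area minimal surface, and the volume comparison driven by Perelman's $\overline{\lambda}$-monotonicity, with the cusped case and the rigidity/equality clause being the technical content of \cite{CFW}), so it matches the approach the paper implicitly depends on.
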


We describe the idea of the proof of our main result for bracelet links. We begin with a copy of the $2n$-replicant of each tangle making up the bracelet link. Each of the decomposition surfaces in these link complements are totally geodesic by Theorem \ref{fixed surface lemma}. Then by repeated cutting and regluing, always along totally geodesic surfaces, we obtain $2n$ copies of the original link complement. Then Theorem \ref{Cutting and pasting theorem} implies that the resulting set of copies must have total volume greater than the sum of the volume of the replicants, and therefore the original link complement has volume greater than the sum of the volumes of the individual tangles. This turns a geometric question into a combinatorial question. We answer that question in a very general setting in Section 2, where the main theorem is stated and proved. 

In Section 3, we generalize the main theorem in various ways.
In Section 4, we apply these results to prove the theorems listed above for the various link complements in $S^3$. 
In Section 5,  we consider the determination of appropriate hyperbolicity for various kinds of tangles. 

In particular, we prove that if  $\cT$ is a $2n$-hyperbolic saucer tangle for some $n \geq 1$,
    then, $\cT$ is $2m$-hyperbolic for all $m \geq n$.
Thus, once we know a given tangle is $2n$-hyperbolic, and so it can be used to prove hyperbolicity and lower bound volume of a  $2n$-bracelet link, it can also be used in any bracelet link of greater length.
A corollary to this result shows that if a tetrahedral tangle is $(2m,2n)$-hyperbolic, then it is $(2r,2s)$-hyperbolic for all $r \geq m$ and $s \geq n$.

In Section 6, we consider explicit examples of saucer tangles. We use results of \cite{Volz}, \cite{Wu}  and Theorem \ref{Saucer hyperbolicity theorem} to  determine $2n$-hyperbolicity for all arborescent tangles.
We go on to prove certain solid-cylinder tangles are 2-hyperbolic by relating them to $2m$-hyperbolic saucer tangles having alternating projections.

Both the  cyclic graph on $2n$ vertices and the torus lattice graph on $mn$ vertices are examples of graphs in $S^3$ such that the replacement of the vertices with tangles and the edges with strands connecting the tangles yields a link for which a version of Theorem \ref{Main theorem separating} holds. That is to say, if all of the constituent tangles of a given link $L$ are hyperbolic in the appropriate way, then they each have an associated volume and the link is hyperbolic with volume at least as large as the sum of the volumes of the tangles.  We call such a graph a \emph{tangle reflection graph}. In Section 8, we extend beyond cyclic graphs and torus lattice graphs to a variety of other tangle reflection graphs.

In Section 9, we provide tables of various types of volumes for some tangles. 

\section{The main theorem for a separating starburst}\label{Main theorem separating section}
\subsection{Definitions and statement of the theorem}

In this section we prove that if given an appropriately defined collection of surfaces in a 3-manifold that together intersect in a 1-manifold, such that the pieces they decompose the manifold into are appropriately hyperbolic, then the manifold is hyperbolic with volume at least as large as the sum of the appropriately defined volumes of the pieces.

In the case that the manifold $N$ has boundary components of genus greater than one, we double the manifold across these boundaries. The resulting manifold $N'$ has only torus boundary, and the collection of surfaces doubles to a collection of surfaces in the new manifold. Then we can work with that manifold. The manifold $N$ is tg-hyperbolic if and only if the manifold $N'$ is hyperbolic. Thus, from now on we will work with manifolds with no higher genus boundary, keeping in mind the results for those manifolds imply results for those that have such boundary components.

We motivate the more general results by returning to  the example of a bracelet link $L \subset S^3$ that decomposes into a cycle of $2m$ saucer tangles $(\cT_i)_{i=1}^{2m}.$
There is a collection $\bS = (\Sigma_i)_{i=1}^m$ of surfaces in $M = S^3 \setminus L$ that separates $M$ into $2m$ pieces that may each be viewed as the complement of a particular tangle $\cT_i$ in a saucer.
Local to the locus where these surfaces intersect, these resemble the product of a circle with Figure \ref{newstarburstfigure}(a).
We cut  along $\bS$ and analyze the resulting pieces, generalizing to the following situation.
\begin{definition}
    Let $M$ be a compact orientable 3-manifold, possibly with torus boundaries, and let $\bS = \cbr{\Sigma_1,\dots,\Sigma_m}$ be a collection of $m$ two-sided  surfaces properly embedded in $M$.
    We denote by $M \dbs \bS$ the manifold $M \dbs (\Sigma_1 \cup \cdots \cup \Sigma_m)$ obtained by removing the interior of a regular neighborhood of the union of the surfaces. 
    
    We assume that any point in the intersection set of two of the surfaces is in fact in the intersection set of all of the surfaces, which is to say, the intersection set $E = \Sigma_i \cap \Sigma_j = \bigcap _{k = 1}^m \Sigma_k$ for any $i \neq j$. We further assume that $E$ is a compact 1-manifold properly embedded in $M$.
    
    We suppose that  there is a regular neighborhood $U$ of $E$ such that the induced pair $(U,U \cap \cup_{k=1}^m \Sigma_k)$ is diffeomorphic to $(D \times E, S_m \times E)$, where $D$ is an open disk and $S_m$ is the union of $m$ open line segments $L_i$ in $D$, in cyclic order, each passing directly through a single intersection point at the center,  as in Figure \ref{newstarburstfigure}(a). 
    We further require that the diffeomorphism takes $U \cap \Sigma_i$ to $L_i \times E$ in cyclic order. 
    Suppose further that $M \dbs \bS$ can be expressed as the disjoint union of $2m$ 3-manifolds, denoted $M_1, \dots M_{2m}$, each intersecting the boundaries of the regular neighborhoods of two of the surfaces $\Sigma_j$ with indices differing by one modulo $2m$, as in Figure \ref{newstarburstfigure}(b).
     
    Then we say that $(M, \bS)$ is an {\bf $m$-sheeted separating starburst}.
\end{definition}

For convenience we allow the case of $E = \emptyset$ in the definition. 

\begin{figure}
    \centering
    \includegraphics[scale=0.6]{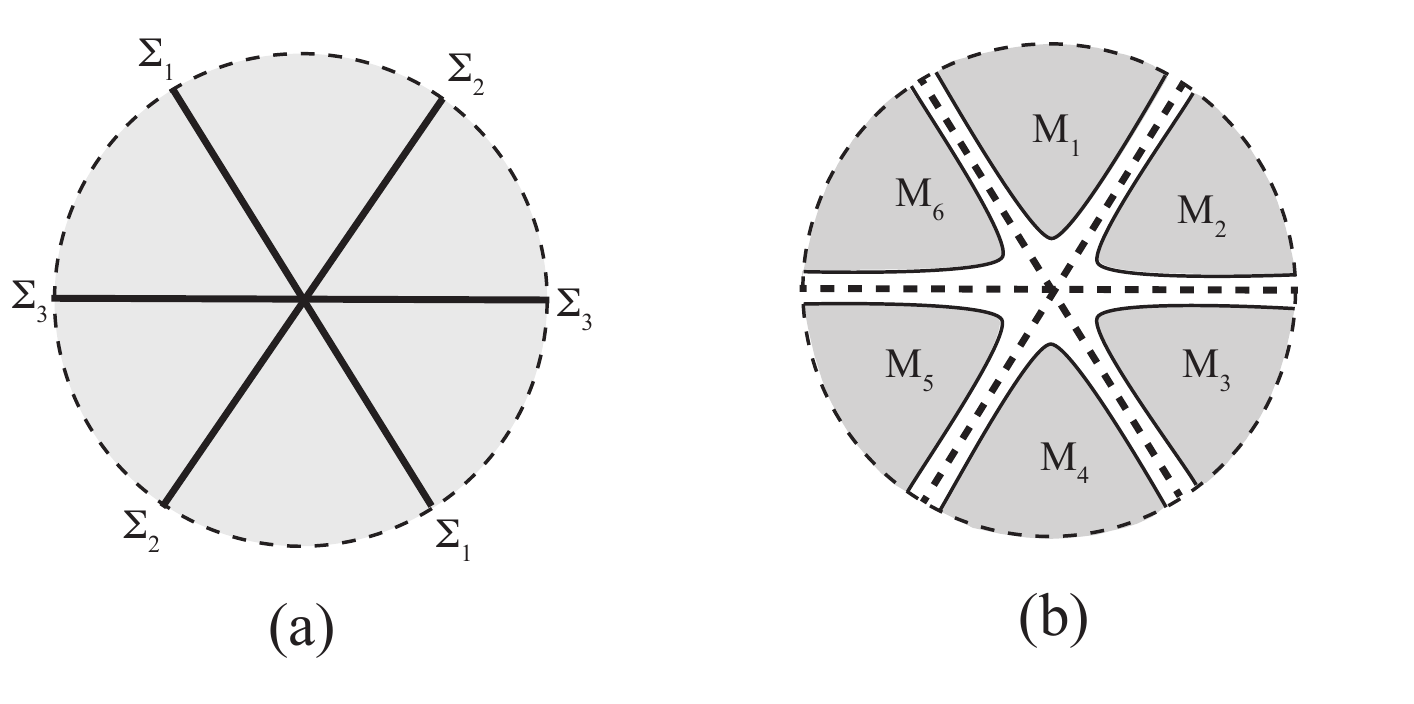}
    \caption{The starburst $S_m$ for $m = 3$ appears in (a). The cross section of the neighborhood $U$, after the removal of the regular neighborhood of the union of the surfaces appears in (b). 
    }
    \label{newstarburstfigure}
\end{figure}

In the case of a bracelet link $N = S^3 \setminus L$, the set $\bS$ consists of $m$ punctured spheres, all intersecting in a single circle. 
The manifold $N \dbs \bS$ is a disjoint union of $2m$ connected 3-manifolds $N_i$, each  given by the complement of a tangle in a saucer. 

We seek to associate with each tangle complement $N_i$ a notion of hyperbolicity, and we do so by bootstrapping the definition of hyperbolicity defined for links.
As defined in the introduction, in the case of a saucer tangle, there is a well-defined notion of a \emph{$2m$-replicant} formed by gluing together a cycle of $2m$ copies of $\cT_i$ along compatible endpoints.
We say that $\cT_i$ is \emph{$2m$-hyperbolic} if its $2m$-replicant is a hyperbolic link.

This gluing occurs by specifying surfaces $V^1_i,V^2_i \subset \partial N_i$ such that  $V^1_i \cap V^2_i = E$, given by the \emph{counter-clockwise pointing} and \emph{clockwise pointing} sides of the subsurface of $\partial N_i$ corresponding with $\bS$, and gluing copies of $V^j_i$ in different copies of $N_i$ together in a cycle. 

In analogy with the case of tangles, we may define the $2m$-replicant of this data as follows.

\begin{definition}
    A \emph{piece} is a pair $(P,(V^1,V^2))$ where $P$ is a 3-manifold with distinguished surfaces $V^1,V^2 \subset \partial P$ such that $V^1 \cap V^2$ is a 1-manifold (possibly disconnected and possibly with boundary).
    We may define the $2m$-replicant of a piece $D^{2m}(P,\prn{V^1,V^2})$ by reflecting $2m$ times over $V^1$ or $V^2$ as follows.
    \begin{equation}\label{2n replication equation}
        D^{2m}(P,\prn{V^1,V^2}) := \frac{\coprod_{i=1}^{2m} P}{(V^1,2i) \sim (V^1,2i+1) \hspace{10pt} \text{ and } \hspace{10pt} (V^2,2i-1) \sim (V^2,2i)}
    \end{equation}
    Here, arithmetic is performed modulo $2m$.
    We say that $(P,\prn{V^1,V^2})$ is \emph{$2m$-hyperbolic} if $D^{2m}(P,\prn{V^1,V^2})$ is hyperbolic, in which case we define
    \[
        \vol^{2m}(P,\prn{V^1,V^2}) := \frac{1}{2m} \vol(D^{2m}(P,\prn{V^1,V^2})).
    \]
    In the case that the surfaces $(V^i)$ are clear, we will often simply refer to such a piece as $P$, and write $D^{2m}(P)$ and $\vol^{2m}(P)$ for the replicant and volume. 
\end{definition}
\begin{remark}
    The order of $V^1$ and $V^2$ can be interchanged without changing the diffeomorphism class of the replicant.
    However, we use the syntax of ordered pairs for reasons that will be clear in the later \emph{$\ell$-piece} definition.
\end{remark}

We let $H$ represent the surface that is the closure of the complement of $V^1 \cup V^2$ in $\partial P$. Then $H$ corresponds to boundary components in $D^{2m}(P,\prn{V^1,V^2})$. When  $D^{2m}(P, (V^1, V^2))$ has  boundary components of genus one, keep in mind that we remove that boundary component when determining the hyperbolicity of $D^{2m}(P, (V^1, V^2))$.

For an $m$-sheeted separating starburst $(M, \bS)$, the manifold $M \dbs \bS$ is a disjoint union of $2m$ pieces, each with two possibly disconnected surfaces  on its boundary corresponding to two consecutive surfaces in the starburst. We denote the $i$th piece and its two distinguished surfaces on its boundary by $(M_i,(V^1_i, V^2_i))$ where gluing $V^2_i$ in the boundary of the $M_i$ to $V^1_{i+1}$ in the boundary of $M_i$, for all $i$ cyclically returns us to the manifold $M$. 

We will prove the following result, which demonstrates that one can use $2m$-hyperbolicity of suitably defined pieces of a manifold to prove hyperbolicity and also to obtain a lower bound on the volume of the manifold. This version of the theorem applies to bracelet links in the 3-sphere.

\begin{theorem}\label{Main theorem separating}
    Suppose $(M,\bS)$ is an $m$-sheeted separating starburst in a 3-manifold $M$ with all boundary components tori, and $(M_i,(V^1_i,V^2_i))_{i=1}^{2m}$ are the associated pieces. If 
    $E$ does not intersect $\partial M$ and if all $M_i$ are $2m$-hyperbolic, then $M$ is hyperbolic, with volume satisfying
    \[
        \vol(M) \geq \sum_{i=1}^{2m} \vol^{2m}(M_i).
    \]  
    Equality is attained if and only if each surface $\Sigma_i \subset M$ is totally geodesic.
\end{theorem}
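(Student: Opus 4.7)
The strategy follows the sketch given in the introduction: assemble the $2m$ replicants into a single finite-volume hyperbolic manifold, identify its gluing surfaces as totally geodesic, and apply Theorem \ref{Cutting and pasting theorem} along their union to rearrange the pieces into $2m$ disjoint copies of $M$, from which the volume bound will fall out.

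\emph{Setup and totally geodesic surfaces.} I will first form the disjoint union $N := \bigsqcup_{i=1}^{2m} D^{2m}(M_i)$. By the $2m$-hyperbolicity hypothesis each replicant $D^{2m}(M_i)$ is hyperbolic of finite volume, so $N$ is as well, with
\[
    \vol(N) \;=\; 2m \sum_{i=1}^{2m} \vol^{2m}(M_i).
\]
Each replicant $D^{2m}(M_i)$ carries a natural isometric involution for each of its $2m$ gluing surfaces --- the doubling involution that swaps the two adjacent copies of $M_i$ across the surface while fixing it pointwise. By Theorem \ref{fixed surface lemma} each gluing surface is totally geodesic, so the union $\Sigma \subset N$ of all $4m^2$ gluing surfaces is a properly embedded, two-sided, totally geodesic surface.

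\emph{Cut and reglue.} Cutting $N$ along $\Sigma$ yields a disjoint union $Y$ of $4m^2$ pieces consisting of $2m$ copies of each $M_i$. Both the ``replicant pattern'' (reassembling $Y$ into $N$) and the ``target pattern'' (reassembling $Y$ into $\bigsqcup_{k=1}^{2m} M$) are pairwise identifications of the $V^1$- and $V^2$-boundary subsurfaces of $Y$, using the same collection of boundary subsurfaces but pairing them differently. Because consecutive $V^j_i$ subsurfaces are pieces of a common $\Sigma_k \in \bS$ and $E \cap \partial M = \emptyset$, these two pairings are related by a permutation realized by a diffeomorphism $\psi: \Sigma \to \Sigma$. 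Applying Theorem \ref{Cutting and pasting theorem} to the triple $(N,\Sigma,\psi)$ produces a finite-volume hyperbolic manifold $N'$ with $\vol(N') \geq \vol(N)$, whose $2m$ connected components are each a copy of $M$. Hence $M$ is hyperbolic and $\vol(N') = 2m\,\vol(M)$; dividing $\vol(N') \geq \vol(N)$ by $2m$ yields the desired bound. The equality clause of Theorem \ref{Cutting and pasting theorem} translates, via the identifications, into each $\Sigma_k \subset M$ being totally geodesic.

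\emph{Main obstacle.} The central technical step is specifying the diffeomorphism $\psi$ precisely and verifying that it globally rearranges $Y$ into $2m$ copies of $M$. This requires careful bookkeeping along the intersection locus $E$: the $m$ surfaces of $\bS$ meet cyclically at $E$, producing the $2m$ ``wedges'' $M_i$, and one must track consistently how the $V^1$- and $V^2$-labels permute as the replicant pattern is recast into the target pattern. The hypothesis $E \cap \partial M = \emptyset$ is used here to ensure that all gluing surfaces have uniform structure (no extra boundary on $\partial M$), allowing the pairing permutation to be realized by a single global diffeomorphism of $\Sigma$. Once $\psi$ has been constructed, the volume bound and the hyperbolicity of $M$ follow directly from Theorem \ref{Cutting and pasting theorem}.
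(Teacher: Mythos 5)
Your argument is precisely the paper's argument in the special case $E = \emptyset$, where the paper notes the theorem is an immediate corollary of Theorem \ref{Cutting and pasting theorem}. In the general case there is a genuine gap at the step where you assert that ``the union $\Sigma \subset N$ of all gluing surfaces is a properly embedded, two-sided, totally geodesic surface.'' When $E \neq \emptyset$ and $m \geq 2$, the $m$ totally geodesic gluing surfaces inside a single replicant $D^{2m}(M_i)$ are not disjoint: they all meet along the copies of $E$ (in the bracelet-link model they are $m$ punctured spheres sharing the central circle), so their union is not an embedded surface and Theorem \ref{Cutting and pasting theorem} cannot be applied to it in one shot. This is not merely the bookkeeping issue you flag about specifying the permutation $\psi$; there is no single surface to surger along. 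Nor can the argument be repaired by cutting along the surfaces one at a time in an arbitrary order: after the first cut-and-reglue the hyperbolic metric changes, and the images of the remaining surfaces need not be totally geodesic in the new manifold, so the hypothesis of Theorem \ref{Cutting and pasting theorem} fails at the second step.

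The paper's proof is built entirely around this obstruction. It only ever cuts along one totally geodesic surface at a time, and it guarantees total geodesity at each stage by arranging for each intermediate manifold to be a palindromic cycle $\cS(aa^R)$, so that the cutting surface is the fixed-point set of a reflection and Theorem \ref{fixed surface lemma} applies (this is Lemma \ref{Volume relation lemma separating}). Each such move yields $2\vol(a_1a_2) \geq \vol(a_1a_1^R) + \vol(a_2a_2^R)$, and the cyclic-word algebra of Construction \ref{Cyclic word construction}, Lemma \ref{Pizza inductive lemma} and Proposition \ref{Reducability proposition} shows that a finite chain of these moves reduces $T_1\cdots T_{2m}$ to a rational combination of the single-letter words $T_i^{2m}$, which is what produces the volume inequality and, with a little more care, the equality characterization. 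To complete your proof you would need to supply this (or an equivalent) sequential scheme; the one-step cut-and-reglue only establishes the theorem when the surfaces in $\bS$ are pairwise disjoint.
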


\begin{warning}
    The notation $V^j_i$ is suppressed in the volume bound because the surfaces are chosen canonically, but in general, differing choice of surfaces $V^j_i$ may yield different geometry, which may not obey the volume bound.
\end{warning}

In the case that $E = \emptyset$, so the surfaces $\{\Sigma_i\}$ do not intersect one another, the result is an immediate corollary of Theorem \ref{Cutting and pasting theorem}. For each $i$, we form $D^{2m}\prn{M_i}$, and then, cutting and pasting along the $2m$ totally geodesic surfaces in each of the $D^{2m}\prn{M_i}$, we can form $2m$ copies of $M$, with total volume at least as large as the total volume of the $2m$-replicants. Dividing by $2m$ yields the result. 
This is the case for thickened-cylinder tangles in the thickened torus, for instance. 

We henceforth assume $E \neq \emptyset$.
In this case, we prove Theorem \ref{Main theorem separating} in Subsection \ref{Separating subsection}.
Then, we consider a generalization of this to \emph{separating starbursts in a piece} in Subsection \ref{Separating inductive subsection}.

A further generalization holds in the case that $M \dbs \bS$ does not necessarily decompose as the disjoint union of $\cbr{M_i}$, 
$E$ may intersect $\partial M$, and $M$ is not necessarily orientable.
These will be considered in Section \ref{Main theorem nonseparating section}.

\subsection{Proof of the main theorem for a separating starburst in a manifold}\label{Separating subsection}
For the duration of this subsection, we fix the index $m$, starburst $(M,\bS),$ and resulting pieces $(M_i,(V^1_i,V^2_i))$ as in Theorem \ref{Main theorem separating}, and we assume that each $(M_i,(V^1_i,V^2_i))$ is $2m$-hyperbolic.
It will help to refer to Example \ref{Algebra example} while following the proof.
We suppress the notation $V^j_i$ most of the time. 

Note that both $M$ and the $2m$-replicants $D^{2m}(M_i,(V^1_i,V^2_i))$ are formed by gluing together $2m$ elements of $\cbr{M_i}_{i=1}^{2m}$ in a cycle such that all such gluings are given by canonical isometries $V^j_i \simeq V^j_i$ or $V^2_i \simeq V^1_{i+1}$, with arithmetic performed modulo $2m$.

Furthermore, $D^{2m}(M_i) = D^{2m}(M_i,(V^1_i,V^2_i))$ possesses $m$ totally geodesic surfaces. Choosing one of them,  $F$, the manifold $D^{2m}(M_i) \dbs F$ is composed of two connected components, where each connected component is formed by gluing together $m$ many copies of $M_i$.
Forming a manifold that is composed of a cycle of pieces, where $m$-many copies of $M_i$ are connected, followed by $m$-many copies of $M_j$, for any $1 \leq i,j \leq 2m$, Theorem \ref{Cutting and pasting theorem} implies that it is hyperbolic with volume at least $\frac{1}{2} \prn{\vol^{2m}(M_i) + \vol^{2m}(M_j)}$.

Our strategy will use this logic to confer properties between different manifolds formed by cycles of $2m$ elements of $\cbr{M_i}$.
We use vector spaces to represent the possible configurations, which we now define.
\begin{construction}\label{Cyclic word construction}
    Let $\widetilde W$ be the set of cyclic words of length $2m$ from the alphabet $\cbr{T_1, T_1^R,\dots,T_{2m},T_{2m}^R}$ such that the words are expressed as a product
    \[
        t_1 \cdots t_{2m}
    \]
    where for all $i$, the product $t_it_{i+1}$ is a word from the following set
    \[
        \cbr{T_jT_{j+1}}_{j=1}^m \cup \cbr{T_j^R T_j}_{j=1}^m \cup \cbr{T_j T_j^R}_{j=1}^m \cup \cbr{T_{j+1}^R T_j^R}_{j=1}^m
    \]
    with arithmetic done modulo $2m$; 
    these correspond precisely with the manifolds that can be formed formally as cycles of $2m$ elements of $\cbr{M_i}$ along the canonical diffeomorphisms described in the preceding discussion.
    We accordingly call the letters $T_i^R$ \emph{reflected}. 
    
    There is a map from $\widetilde W$ into the cyclic words on alphabet $\cbr{T_i}$ made by replacing $T_i^R$ with $T_i$.
    This map is not injective in the case $2m = 2$;
    in this case, the elements $T_1T_2$ and $T_1^RT_2^R$ constitute the only pair of distinct cyclic words that map to the same word without reflections.
    
    We argue that the map is injective when $2m \geq 4$.
    Indeed, given the image of a valid word $w$ containing multiple letters, whether a letter is reflected is determined by its exponent and whether the following letter has one-larger or one-smaller index than the preceding letter.
    Since these are distinct indices mod $2m$, no two distinct words have the same image containing multiple letters.
    Noting that $(T_iT_i^R)^m = (T_i^RT_i)^m$ as cyclic words, this yields injectivity.
    We refer to the image of this map as $\Wknot$.
    
    For an arbitrary word $w = t_1\cdots t_{2m} \in \Wknot$, define $w^R := t_{2m}\cdots t_1$.
    Define the subset $\Wknoti i \subset \Wknot$ of \emph{palindromic words on $i$ letters} to be those words expressed as a product $t_1^{a_1} \cdots t_i^{a_i}t_i^{a_i} \cdots t_1^{a_1}$, where each $a_i \geq 0$.
    This defines a filtration on $\Wknot$:
    \[
        \Wknoti 1 \subset \cdots \subset \Wknoti m \subset \Wknot.
    \]
    Define the $\QQ$-vector space $\Mknoti i$ generated by $\Wknoti i$, and define $\Mknot$ to be generated by $\Wknot$.
\end{construction}

As in the above discussion, the set $\widetilde W$ corresponds naturally with a collection of 3-manifolds.
In fact, $T_1T_2$ and $T_1^RT_2^R$ correspond with the same 3-manifold up to homeomorphism by reflecting one to obtain the other.

Hence for all $w \in W$, we may unambiguously define the 3-manifold $\cS(w)$ to be the 3-manifold formed by gluing together a cycle of $2m$ many pieces from $\cbr{\prn{M_i,\bS}}$ such that the associated indices correspond with the indices of the letters in $w$.
Further, we may realize the free commutative monoid $\ZZ_{\geq 0}[\Wknot] \subset \Mknot$ via 3-manifolds by defining
\[
    \cS(w_1 + \cdots + w_k) := \cS(w_1) \sqcup \cdots \sqcup \cS(w_k).
\]

Using this, we may define volume. By taking hyperbolic volumes (with non-hyperbolic manifolds having volume 0), there is a map 
\[
    \vol:\Wknot \rightarrow \RR_{\geq 0}
    \hspace{50pt}
    \vol(w) := \vol(\cS(w)).
\]  
This extends linearly to a map $\vol:\Mknot \rightarrow \RR$.
Further, whenever $\cS(w_1)$ and $\cS(w_2)$ are hyperbolic, we have $\vol(w_1 + w_2) = \vol(\cS(w_1) \sqcup \cS(w_2)) = \vol(w_1) + \vol(w_2)$.

By the preceding discussion, we have $\cS(T_1T_2\cdots T_{2m}) = M$, so that
\begin{equation}\label{Volume of manifold}
    \vol(T_1T_2 \cdots T_{2m}) = \vol(M).
\end{equation}

Additionally, we have $\cS(T_i^{2m}) = D^{2m}(M_i,(V^1_i,V^2_i))$ for all $i$.
Hence the linear transformation $\vol:\Mknoti 1 \rightarrow \RR$ is simply the map 
\begin{equation}\label{Evaluation morphism}
    \vol : \Mknoti 1 \rightarrow \RR \hspace{50pt} \text{ defined on basis by } \hspace{50pt} T_i^{2m} \mapsto 2m \cdot \vol^{2m}(M_i) 
\end{equation}

We prove our theorem by using Theorem \ref{fixed surface lemma} and Theorem \ref{Cutting and pasting theorem} to relate the volumes of different elements of $\Mknot$.
These relations will be realized via the following quotient map
\[
    \pi: \Mknot \twoheadrightarrow V := \Mknot / \Span\cbr{2a_1a_2 - a_1a_1^R - a_2a_2^R \mid \operatorname{len}(a_1) = m}
\]
which equates a cyclic word with the average of the ``doubled'' halves made when splitting the word into sub-words of length $m$. 
We will see in the following proposition that these relations reduce cyclic words to sums of words made up of a single letter.
\begin{proposition}\label{Reducability proposition}
    The restriction $\pi|_{\Mknoti 1}:\Mknoti i \rightarrow V$ is an isomorphism. 
    In particular, we have
    \[
        2m \cdot \vol(w) \geq \sum_{i=1}^{2m} q_i(w) \cdot \vol(T_i^{2m}).    
    \]
    for all $w \in W$, where $q_i(w)$ denotes the number of appearances of the letter $T_i$ in $w$.
\end{proposition}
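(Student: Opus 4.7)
The plan is to establish the isomorphism $\pi|_{\Mknoti{1}}$ by treating injectivity and surjectivity separately, and then to derive the volume inequality by telescoping instances of Theorem~\ref{Cutting and pasting theorem}.

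For injectivity I would exhibit linear functionals that separate the generators $T_i^{2m}$ in $V$. Extend each letter-count map $q_i : \Mknot \to \QQ$ linearly from its values on $W$. The identities $q_i(a_j a_j^R) = 2 q_i(a_j)$ and $q_i(a_1 a_2) = q_i(a_1) + q_i(a_2)$ force $q_i(2 a_1 a_2 - a_1 a_1^R - a_2 a_2^R) = 0$, so each $q_i$ descends to a well-defined functional on $V$. Since $q_j(T_i^{2m}) = 2m\,\delta_{ij}$, the classes of $T_i^{2m}$ remain linearly independent in $V$.

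For surjectivity I would reduce an arbitrary $w \in W$ modulo relations to an element of $\Mknoti{1}$. Splitting $w = a_1 a_2$ with $|a_1| = m$ immediately gives $\pi(w) = \tfrac{1}{2}(\pi(a_1 a_1^R) + \pi(a_2 a_2^R))$, reducing to the palindromic case. For a palindrome in canonical form $t_1^{a_1} \cdots t_k^{a_k} t_k^{a_k} \cdots t_1^{a_1}$ with $k \geq 2$, an asymmetric cyclic split — for example, shifting the cut into the long ``corner'' block $t_1^{2 a_1}$ or ``center'' block $t_k^{2 a_k}$ after the WLOG $a_1 \geq a_k$, or across the union of two adjacent blocks when neither outer block is long enough — produces palindromes $c_1 c_1^R$ and $c_2 c_2^R$ that strictly decrease a well-founded complexity such as the lexicographic pair (number of distinct letters, number of cyclic runs). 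Iteration terminates in $\pi(\Mknoti{1})$, proving surjectivity.

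Combining the two, the unique $V$-representation $\pi(w) = \sum_i c_i \pi(T_i^{2m})$ must have $c_i = q_i(w)/(2m)$ by evaluating each $q_j$. This identity is realized concretely as a composition of the single-step substitutions $\pi(w') = \tfrac{1}{2}(\pi(a_1 a_1^R) + \pi(a_2 a_2^R))$. At each such step, Theorem~\ref{fixed surface lemma} applied to the reflection involution of $\cS(a_j a_j^R)$ shows the two ``doubling walls'' are totally geodesic, so Theorem~\ref{Cutting and pasting theorem} simultaneously propagates hyperbolicity of $\cS(w')$ from the hyperbolic atoms $\cS(T_i^{2m})$ and yields the inequality $2 \vol(w') \geq \vol(a_1 a_1^R) + \vol(a_2 a_2^R)$, obtained by cutting $\cS(a_1 a_1^R) \sqcup \cS(a_2 a_2^R)$ along the doubling walls and regluing to two copies of $\cS(w')$. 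Telescoping along the reduction tree, with coefficients on the $T_i^{2m}$ forced to collapse by $q_i$-conservation at each node, yields $2m\, \vol(w) \geq \sum_i q_i(w)\,\vol(T_i^{2m})$.

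The main anticipated obstacle is the combinatorial surjectivity step: fixing a well-founded complexity on cyclic words and verifying, for every non-constant palindromic word, that an asymmetric cyclic split exists under which both resulting palindromes are strictly simpler. Once this reduction algorithm is in place, injectivity, the inductive propagation of hyperbolicity, and the telescoping of the volume inequality are routine consequences of the cutting-and-pasting framework.
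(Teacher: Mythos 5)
Your injectivity argument via the letter-count functionals $q_i$ is correct and in fact cleaner than what the paper offers: since every relator $2a_1a_2 - a_1a_1^R - a_2a_2^R$ is killed by each $q_i$, these functionals descend to $V$ and separate the classes $\pi(T_i^{2m})$. The difficulty is in your surjectivity step. You assert that for every non-constant palindrome one can choose an asymmetric cyclic split so that \emph{both} resulting palindromes $c_1c_1^R$ and $c_2c_2^R$ strictly decrease a well-founded complexity, so that iteration terminates. No such complexity exists: the reduction relation genuinely contains cycles. The paper's own Example \ref{Algebra example} (with $2m=10$, $v_i = T_1^{2i}T_2^{2(m-i)}$) is a counterexample — every split of $v_2 = T_1^4T_2^6$ into halves of length $5$ yields at least one doubled half that is again some $v_j$ (the available reductions are $2v_2 = x_2 + v_4$, $2v_2 = v_1 + v_3$, or $2v_2 = 2v_2$), and the best one can do is the cycle $v_4 \to v_3 \to v_1 \to v_2 \to v_4$. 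So your iteration never reaches $\Mknoti 1$ in finitely many steps, and neither the membership claim $\pi(w) \in \pi(\Mknoti 1)$ nor the telescoped volume inequality follows from a finite reduction tree.

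The paper's Lemma \ref{Pizza inductive lemma} accepts this: one half drops to $\Wknoti{i-1}$ while the other remains in $\Wknoti i$, producing a sequence $w_1, w_2, \dots$ in $\Wknoti i$ with $\pi(w_j) = \tfrac12(\pi(u_j) + \pi(w_{j+1}))$, $u_j \in \Wknoti{i-1}$. Finiteness of $\Wknoti i$ forces a repetition $w_j = w_k$, and unfolding the recurrence gives $\pi(w_j) = (\text{element of }\pi(\Mknoti{i-1})) + 2^{-(k-j)}\pi(w_j)$, which one solves for $\pi(w_j)$ because the coefficient $2^{-(k-j)}$ is strictly less than $1$ (this is exactly the computation $v_4 = \tfrac45 x_1 + \tfrac15 x_2$ in the example). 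The volume inequality is extracted the same way: the unfolded chain of inequalities $2\vol(w_j) \geq \vol(u_j) + \vol(w_{j+1})$ gives $\vol(w_j) \geq \sum_l 2^{-l}\vol(u_l) + 2^{-(k-j)}\vol(w_j)$, and one rearranges. Your telescoping idea and the $q_i$-conservation bookkeeping are the right ingredients, but you must replace "terminating reduction" with "eventually periodic reduction plus solving the linear recurrence"; without that, the proof does not close.
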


\begin{figure}[htpb]
    \centering
    \includegraphics[width=.8\textwidth]{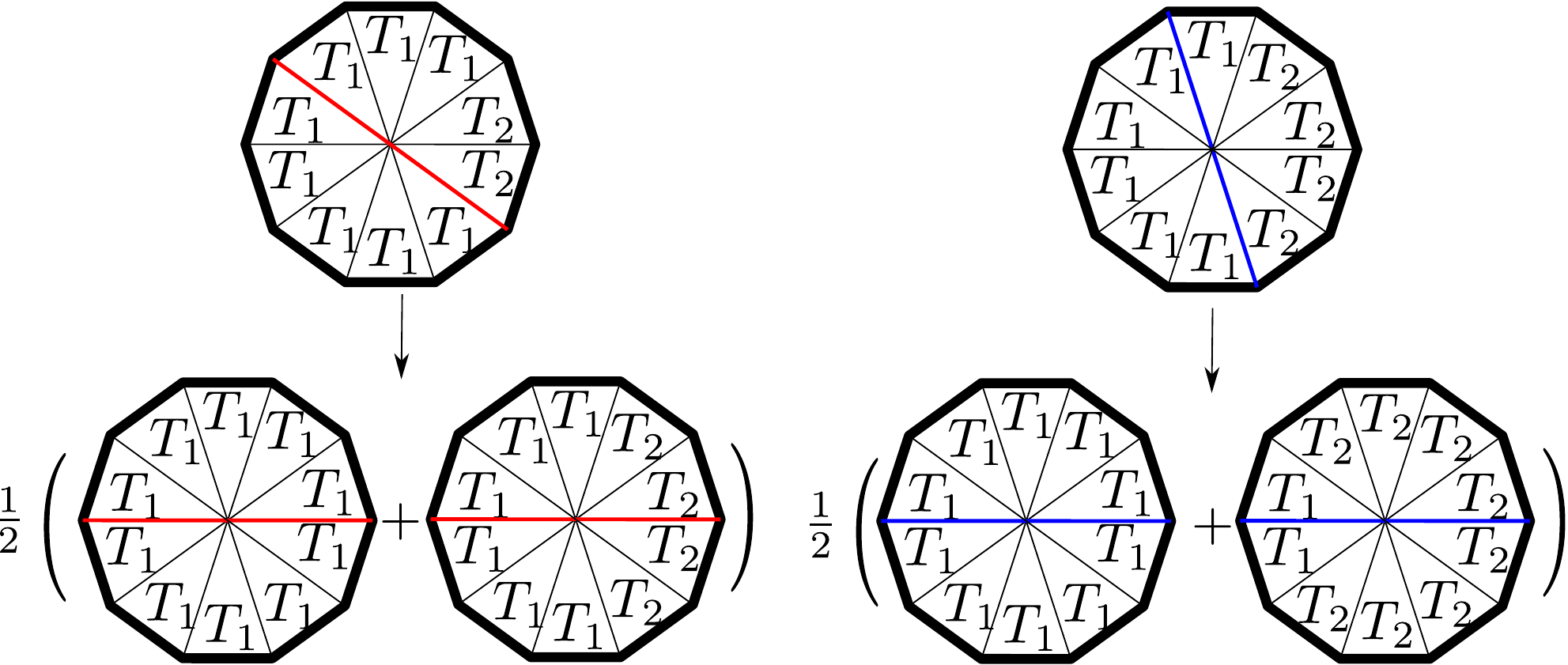}
    \caption{Some examples of relations laid out by $\pi$ in the case $m = 5$.the roles of $T_1$ and $T_2$ can be reversed.
    }
    \label{Pizza figure}
\end{figure}

First we work through an example.
\begin{example}\label{Algebra example}
    Fix $2m = 10$.
    Let $v_i \in \Wknoti 2$ be the monotonic cyclic word $T_1^i T_2^{m-i} T_2^{m-i} T_1^i$ and define $x_j := T_j^{2m}$.
    Then, by Figure \ref{Pizza figure} there is a cycle of words $(v_4,v_3,v_1,v_2)$ such that each element of this cycle is expressed as an $\QQ$-linear combination in $V$ of the next in the cycle and an element of $\Wknoti 1$.
    In particular, repeatedly applying the relations depicted, we obtain
    \[
        v_4 = \frac{1}{2}(x_1 + v_3) = \frac{1}{2}(x_1 + \frac{1}{2} (x_1 + v_1)) = \frac{1}{2}(x_1 + \frac{1}{2} (x_1 + \frac{1}{2}(x_2 + v_2))) = \frac{1}{2}(x_1 + \frac{1}{2} (x_1 + \frac{1}{2}(x_2 + \frac{1}{2}(x_2 + v_4))))  
    \] 
    This in turn is equivalent to the linear relation $v_4 = \frac{4}{5}x_1 + \frac{1}{5}x_2$, so $\pi(v_4)$ is generated by $\pi(\Wknoti 1)$.
    
    In particular, each of the moves outlined in Figure \ref{Pizza figure} are implemented by Theorem \ref{Cutting and pasting theorem}, so that we have 
    \[
        \vol(v_4) \geq \vol\prn{\frac{1}{2}(x_1 + \frac{1}{2} (x_1 + \frac{1}{2}(x_2 + \frac{1}{2}(x_2 + v_4))))  }
    \] 
    and hence $10\vol(v_4) \geq 8\vol(T_1^{10}) + 2\vol(T_2^{10})$, verifying Proposition \ref{Reducability proposition} in this case.
\end{example}

The following lemma establishes that the map $\pi$ corresponds with relations on volume. 
\begin{lemma}\label{Volume relation lemma separating}
    Suppose that $w = a_1a_2 \in \Wknot$ where $\operatorname{len}(a_1) = m$, and define $w_j := a_ja_j^R$.
    Then,
    \begin{equation}\label{Volume relation equation}
        2\vol(w) \geq \vol\prn{w_1 + w_2}.
    \end{equation}
    Further, we have $2q_i(w) = q_i(w_1) + q_i(w_2)$ for all $i$, where $q_i(w)$ denotes the number of appearances of the letter $T_i$ in $w$.
    \end{lemma}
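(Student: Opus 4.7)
The plan is to verify the combinatorial identity by direct counting and to prove the volume inequality by realizing two disjoint copies of $\cS(w)$ as a cut-and-paste of $\cS(w_1) \sqcup \cS(w_2)$ along a totally geodesic surface. For the count, reversing a word preserves letter multiplicities, so $q_i(w_j) = q_i(a_ja_j^R) = 2q_i(a_j)$ and $q_i(w) = q_i(a_1) + q_i(a_2)$; summing gives $q_i(w_1) + q_i(w_2) = 2q_i(w)$.

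For the volume inequality we may assume both $\cS(w_1)$ and $\cS(w_2)$ are hyperbolic, as otherwise $\vol(w_1 + w_2) = 0$ and the bound is automatic. Each palindromic manifold $\cS(w_j) = \cS(a_j a_j^R)$ carries a natural self-diffeomorphism induced by the combinatorial reflection $k \mapsto 2m+1-k$ on its cyclic positions, which sends the piece at position $k$ to its copy at position $2m+1-k$ via the canonical identification of their distinguished boundary surfaces $(V^1_i,V^2_i)$. By Mostow--Prasad rigidity this diffeomorphism is isotopic to an isometric involution of $\cS(w_j)$, whose fixed locus $\Sigma_j$ is a totally geodesic surface by Theorem \ref{fixed surface lemma}. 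A direct combinatorial check shows that the fixed interfaces under the position reflection are exactly the palindromic interfaces between positions $m, m+1$ and between positions $2m, 1$; thus $\Sigma_j$ is precisely the disjoint union of these two interface surfaces.

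Cutting $\cS(w_j)$ along $\Sigma_j$ severs its cyclic chain at both palindromic interfaces and produces two homeomorphic linear chains of $m$ pieces, each realizing $a_j$ and bounded by one interface surface on either end. Performing this cut in both summands of $\cS(w_1) \sqcup \cS(w_2)$ yields four linear chains: two realizing $a_1$ and two realizing $a_2$. We now reglue along $\Sigma_1 \cup \Sigma_2$ via a diffeomorphism $\psi$ that pairs the ends of each linear $a_1$ with the ends of a distinct linear $a_2$, reassembling the four chains into two cyclic words $a_1 a_2 = w$, i.e., two disjoint copies of $\cS(w)$. Theorem \ref{Cutting and pasting theorem} then yields $\vol(\cS(w) \sqcup \cS(w)) \geq \vol(\cS(w_1) \sqcup \cS(w_2))$, which is exactly $2\vol(w) \geq \vol(w_1 + w_2)$.

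The main technical point is verifying that the regluing diffeomorphism $\psi$ is consistent with the canonical identifications that define the various $\cS(\cdot)$ manifolds --- specifically, that when the four linear chains are reassembled, each new two-letter junction falls into one of the allowed forms $\{T_jT_{j+1},\, T_j^RT_j,\, T_jT_j^R,\, T_{j+1}^RT_j^R\}$ of Construction \ref{Cyclic word construction}. This reduces to tracking which orientation (reflected or not) each linear chain inherits from the palindromic cycles $w_j$, and noting that the palindromic structure forces both choices for $\psi$ to produce valid cyclic words.
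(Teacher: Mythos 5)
Your argument is essentially the paper's own proof: the palindromic symmetry of $\cS(w_j)$ fixes exactly the two interface surfaces (a totally geodesic surface $F_j$ by Theorem \ref{fixed surface lemma}), cutting $\cS(w_1)\sqcup\cS(w_2)$ along $F_1\sqcup F_2$ produces two linear chains realizing each $a_j$, and regluing them into two copies of $\cS(w)$ gives the inequality via Theorem \ref{Cutting and pasting theorem}, with the letter count immediate. The one imprecision is your reduction step: if exactly one of $\cS(w_1),\cS(w_2)$ is hyperbolic then $\vol(w_1+w_2)$ equals the volume of the hyperbolic summand rather than $0$, so that case is not ``automatic'' --- though the paper's proof is equally silent on this, and in the application hyperbolicity of both summands is supplied by the induction in Proposition \ref{Reducability proposition}.
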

\begin{proof}
    We prove Equation \eqref{Volume relation equation} purely topologically, and note that the second statement is clear.
    
    Since $w_j = a_ja_j^R$, there is a diffeomorphism of $\cS(w_j)$ which sends the piece corresponding with a letter in $a_j$ to the corresponding letter in $a_j^R$;
    this diffeomorphism fixes a surface $F_j \subset \cS(w_j)$ such that $\cS(w_j) \dbs F_j$ is the disjoint union of two copies of a manifold formed by gluing pieces of $\cbr{M_i}$ together corresponding with the word $a_j$.
    Furthermore, we may glue together one such manifold for each $w_1$ and $w_2$ to form $\cS(w)$;
    this outlines a totally geodesic surface $F_1 \sqcup F_2 \subset \cS(w_1 + w_2)$ such that cutting and gluing along $F_1 \sqcup F_2$ yields $\cS(2w)$.
    Hence equation \eqref{Volume relation equation} follows from Theorem \ref{Cutting and pasting theorem}.
\end{proof}

Proposition \ref{Reducability proposition} will be proved primarily via the following lemma.
\begin{lemma}\label{Pizza inductive lemma}
    Fix $i \geq 2$.
    For every $w \in \Wknoti i$, there is a relation of the form $2w = w' + w''$ where $w' \in \Wknoti i$ and $w'' \in \Wknoti{i-1}$.
\end{lemma}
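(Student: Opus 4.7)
The plan is to apply Lemma \ref{Volume relation lemma separating} to a carefully chosen splitting of $w$ into two length-$m$ halves $w = \alpha\beta$, arranged so that $\alpha\alpha^R \in \Wknoti i$ and $\beta\beta^R \in \Wknoti{i-1}$. A word lies in $\Wknoti{i-1}$ precisely when it admits a palindromic presentation omitting one of the letters $t_1,\dots,t_i$, so I will shift the cut point around $w$ until one of the two halves avoids some letter entirely, then verify that the block patterns in both halves reassemble palindromically after doubling.

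If $w$ already uses strictly fewer than $i$ distinct letters, then $w \in \Wknoti{i-1}$ and the trivial relation $2w = w + w$ suffices. Otherwise every exponent is strictly positive, so reading $w$ cyclically as
\[
    t_1^{2a_1}\,t_2^{a_2}\cdots t_{i-1}^{a_{i-1}}\,t_i^{2a_i}\,t_{i-1}^{a_{i-1}}\cdots t_2^{a_2},
\]
the key numerical observation is $a_1 + a_i \le m - (i-2) \le m$, which forces $\min(a_1,a_i) \le m/2$. Using the palindromic symmetry $t_k \leftrightarrow t_{i-k+1}$, I may assume without loss of generality that $a_i \le a_1$, so in particular $a_i \le m/2$.

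I then cut at position $a_i$ of $w$ (measured from the start of the first $t_1$-block in the linear presentation) and at the antipodal position $a_i + m$. A direct block-level reading yields
\[
    \alpha = t_1^{a_1 - a_i}\,t_2^{a_2}\cdots t_{i-1}^{a_{i-1}}\,t_i^{2a_i}, \qquad
    \beta = t_{i-1}^{a_{i-1}}\cdots t_2^{a_2}\,t_1^{a_1 + a_i},
\]
each of length $m$. The first has a monotonically ascending block sequence, so $\alpha\alpha^R$ assembles into the standard palindromic form on $t_1,\dots,t_i$ with exponents $(a_1 - a_i, a_2, \dots, a_{i-1}, 2a_i)$, placing it in $\Wknoti i$. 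The second omits $t_i$ and has a monotonically descending block sequence, so $\beta\beta^R$ assembles into the standard palindromic form on $t_1,\dots,t_{i-1}$ with exponents $(a_1 + a_i, a_2, \dots, a_{i-1})$, placing it in $\Wknoti{i-1}$. Applying Lemma \ref{Volume relation lemma separating} to this split then produces $2w = \alpha\alpha^R + \beta\beta^R$, as desired.

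The main subtlety, and the step I expect to require the most care, is ensuring that both halves actually have the monotone block sequences claimed: a carelessly chosen shift would cause one half to straddle the middle $t_i^{2a_i}$ block and acquire a ``peak'' block pattern, after which the doubling would fail to be a palindrome in the standard form. Fixing the shift to exactly $a_i$ (and exploiting $a_i \le a_1$, so that the cut position stays inside the first $t_1$-block) is what guarantees monotonicity; the weaker bound $a_i \le m/2$ alone would not have sufficed.
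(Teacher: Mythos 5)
Your proof is correct and takes essentially the same approach as the paper's: both split the palindromic cyclic word into two length-$m$ halves, placing one cut point inside one doubled block and the other at the boundary of the other doubled block, so that one doubled half stays in $\Wknoti{i}$ while the other omits a letter and drops to $\Wknoti{i-1}$. The differences are cosmetic --- the paper normalizes $a_1 \le a_i$ and uses the mirror-image cut (its second half omits $t_1$ rather than $t_i$), and the relation $2w = w' + w''$ holds in $V$ by definition of $\pi$ for any such split, so your appeal to Lemma \ref{Volume relation lemma separating} at that step is harmless but not needed.
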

\begin{proof}[Proof of Lemma \ref{Pizza inductive lemma}] Let $2n$ be the length of $w$.
    Write 
    \begin{equation}\label{The cycle notation}
        w = t_1^{a_1} \cdots t_i^{a_i}t_i^{a_i} \cdots t_1^{a_1} = t_1^{2a_1}t_2^{a_2} \cdots t_i^{a_i}t_i^{a_i} \cdots t_2^{a_2}.
    \end{equation}
    We may replace $w$ with $w^R$, so suppose without loss of generality that $a_1 \leq a_i$.
    
    Group the leftmost $n$ terms in the right-hand side of \eqref{The cycle notation} into a word $x$, and write $w = xy$.
    Note that, since $a_1 \leq a_i$, we have
    \[
        x = t_1^{2a_1} \cdots t_{i-1}^{a_{i-1}} t_{i}^{b}
    \]
    for some $b \leq a_i$.
    Hence $xx^R \in \Wknoti i$.
    Further, note that we have $2a_1 \leq n$, so that
    \[
        y = t_i^{2a_i - b} \cdots t_2^{a_2}
    \]
    implying that $yy^R \in \Wknoti{i-1}$.
    Setting $w' := xx^R$ and $w'' := yy^R$ yields the desired relation.
\end{proof}

\begin{proof}[Proof of Proposition \ref{Reducability proposition}]
    Note that there are no nontrivial relations imposed on $\Mknoti 1$;
    hence $\pi|_{\Mknoti 1}$ is injective.
    It suffices to prove that $\pi(\Wknoti 1)$ generates $V$.
    
    First we will prove inductively that $\pi(\Mknoti 1)$ contains $\pi(\Wknoti i)$ for each $i$, with base case $i = 1$ trivially verified.
    For the case $i \geq 2$ and element $w_1 \in \Wknoti i$, repeated application of Lemma \ref{Pizza inductive lemma} yields a sequence
    \[
        (w_1,w_2,w_3,\dots)
    \]
    where $w_j \in \Wknoti i$ for all $j$ and there exists a relation of the form $\pi(w_j) = \frac{\pi(w) + \pi(w_{j+1})}{2}$ where $w \in W_{i-1} \subset \pi(\Mknoti 1)$.
    Since $W_i$ is finite, there exist two indices $j < k$ such that $w_j = c \cdot w_k$ for some $c > 1$;
    then, there is some $u' \in \Mknoti{i-1}$ such that $\pi(w_j) = \frac{\pi(u')}{c - 1}$ by construction.
    The inductive hypothesis then guarantees that $\pi(w_j) \in \pi(\Mknoti 1)$, completing the inductive step.
    
    In particular, the above construction yields the inequality $\vol \pi(w_j) \geq \frac{\vol \pi(u')}{c-1}$, and induction yields that $\vol \pi(w_j)$ is at least the volume of an element of $\Mknoti 1$ with coordinates summing to 1, and hence $\vol \pi(w_j) \geq 2m \cdot \sum_i q_i(w_j) \vol(T_i^{2m})$, completing the inductive step.
    
    Using this, for an arbitrary word $w \in \Wknot$, we may pick an arbitrary relation $\pi(w) = \pi(w_1) + \pi(w_2)$, and note that $w_1,w_2 \in \Wknoti m$.
    By the above statement, $\pi(w) \in \pi(\Mknoti m) = \pi(\Mknoti 1)$ and $2\vol(w) \geq \vol(w_1) + \vol(w_2) \geq 2m \sum_i q_i(w) \vol(T_i^{2m})$, as desired.
\end{proof}

\begin{proof}[Proof of Theorem \ref{Main theorem separating}]
    For hyperbolicity and the volume bound, we simply combine Theorem \ref{Cutting and pasting theorem}, Proposition \ref{Reducability proposition} and Equation \eqref{Evaluation morphism}.
    Together these yield hyperbolicity and
    \begin{align*}
        2m \cdot \vol(M) 
        &= 2m \cdot \vol(T_1 \cdots T_{2m})\\
        &\geq \sum_{i=1}^{2m} \vol^{2m}(M_i).
    \end{align*}

    We now characterize equality of this bound.
    Suppose there is some $\Sigma \in \bS$ that is not totally geodesic.
    Then Lemma \ref{Volume relation lemma separating} and Theorem \ref{Cutting and pasting theorem} together imply that
    \begin{align*}
          2m \cdot \vol(M) 
          &= 2m \cdot \vol(w)\\
          &> m \cdot \prn{\vol(w_1) + \vol(w_2)}\\ &\geq \sum_i \vol^{2m}(M_i)
    \end{align*}
    where $\Sigma$ splits $w$ into $w_1w_2$.
    
    Suppose conversely that each $\Sigma \in \bS$ is totally geodesic.
    By repeatedly cutting along totally geodesic surfaces, then gluing along isometric totally geodesic surfaces, we may directly construct $\coprod_{i=1}^{2m} D^{2m}(M_i)$ from $\coprod_{i=1}^{2m} M$.
    By Theorem \ref{Cutting and pasting theorem}, this implies that
    \[
        \vol(M) = \sum_i \vol^{2m}(M^i)
    \]
    as desired.
\end{proof}

\subsection{Proof of the main theorem for a separating starburst in an \texorpdfstring{$\ell$}{l}-piece}\label{Separating inductive subsection}

The example from Section \ref{introduction section}, where a given saucer tangle can be decomposed into tetrahedral tangles as in Figure \ref{suacertet2} motivates the following definition.

\begin{definition}
    Let $P$ be a 3-manifold with distinguished surfaces $V^1,V^2,\dots,V^{2\ell} \subset \partial P$ such that $V^{i} \cap V^{j}$ is a 1-manifold (possibly with boundary, or possibly empty) for each distinct $1 \leq i,j \leq 2\ell$.
    We call such data an \emph{$\ell$-piece}, and we call a 3-manifold with no distinguished surfaces a \emph{0-piece}.
\end{definition}
For an $\ell$-piece, define $R = \overline{\partial P \setminus \cup V_i}$. Given an $\ell$-piece $(P,(V_i)_{i=1}^{2\ell}))$, some index $1 \leq k \leq \ell$, and some positive integer $m$, we may form the 3-manifold $D^{2m}(P,(V^{2k - 1},V^{2k}))$.
For any $j \neq 2k-1,2k$, the points $D^{2m}(V^j,(V^{2k - 1},V^{2k}))$ form a surface contained in $\partial D^{2m}(P,(V^{2k - 1},V^{2k}))$, and the intersections of these surfaces remain 1-manifolds.
Hence the data $\prn{D^{2m}(P,(V^{2k - 1},V^{2k})),\prn{D^{2m}(V^{j},(V^{2k - 1},V^{2k}))}_{j \neq 2k-1,2k}}$ constitute an $(\ell-1)$-piece.

That is, one can $2m$-replicate over one of the pairs of surfaces in an $\ell$-piece, which yields an $(\ell-1)$-piece.
Iteration of this construction yields the notion of an $\bm$-replicant, which we define now

\begin{definition}    
    We define the $\bm = (2m_1,\dots,2m_\ell)$-replicant of an $\ell$-piece $D^{\bm}(P,\prn{V^1,V^2,\dots,V^{2\ell}})$ inductively as 
    \[
        D^{(2m_1,\dots,2m_\ell)}(P,\prn{V^j}_{j=1}^{2\ell}) := D^{(2m_2,\dots,2m_\ell)}\prn{D^{2m}(P,(V^{2k - 1},V^{2k})),\prn{D^{2m}(V^{j},(V^{2k - 1},V^{2k}))}_{j=3}^{2\ell}}
    \]
    that is, the $(2m_1,\dots,2m_\ell)$-replicant is the $(2m_2,\dots,2m_{\ell})$-replicant of the $(\ell-1)$-piece resulting from doubling everything across $V^{1}$ and $V^{2}$.
    
    We say that $(P,\prn{V^j})$ is \emph{$\bm$-hyperbolic} if $D^{\bm}(P,\prn{V^j})$ is hyperbolic, in which case we define
    \[
        \vol^{\bm}(P,\prn{V^j}) := \frac{1}{\prod_{j=1}^\ell 2m_j}\vol(D^{\bm}(P,\prn{V^j}).
    \]
    
    Similar to before, if the tuple $\prn{V^j}$ is clear, we often simply refer to the piece as $P$, and refer to its $\bm$-replicant as $D^\bm(P)$ and its $\bm$-volume as $\vol^\bm(P)$.
\end{definition}
The above definition appears to involve a choice of permutation on $\ell$ letters defining the order over which the pairs are ``doubled over.''
However, the following result shows that every such choice yields the same manifold.

We say that an \emph{isomorphism of pieces} is a diffeomorphism of their underlying manifolds which restricts to a diffeomorphism of each surface.
\begin{proposition}\label{Replication commuting proposition}
    Let $(P,(V^j))$ be an $\ell$-piece.
    For any $1 \leq k \leq \ell$, define the notation
    \[
        D^{2m}_k(P,(V^j)) := \prn{D^{2m}(P,(V^{2k - 1},V^{2k})),\prn{D^{2m}(V^{j},(V^{2k - 1},V^{2k}))}_{j \neq 2k-1,2k}}.
    \]
    Then, for any distinct $1 \leq k,k' \leq \ell$ and any positive integers $m$ and $m'$, there is an isomorphism of pieces
    \begin{equation}\label{Double replication equation}
        D^{2m}_{\widetilde k} D^{2m'}_{k'} (P,(V^j)) \simeq D^{2m'}_{\widetilde k'} D^{2m}_{k} (P,(V^j))
    \end{equation}
    where $\widetilde k = k + 1$ if $k > k'$ and $k$ otherwise and $\widetilde k' = k' + 1$ if $ k' > k$ and $k'$ otherwise.
    In particular, for any $\bm = (2m_1,\dots,2m_\ell)$ and $1 \leq k \leq 2\ell$, we have
    \begin{equation}\label{Replication commuting equation}
    D^{\bm}\prn{P,(V^j)_{j=1}^{2\ell}} = D^{(2m_1,\dots,\widehat{2m_k},\dots,2m_\ell)}\prn{D^{2m_k}(P,(V^{2k - 1},V^{2k})),\prn{D^{2m_k}(V^{j},(V^{2k - 1},V^{2k}))}_{j \neq 2k-1,2k}}
    \end{equation}
    where $\widehat .$ denotes exclusion from a list.
\end{proposition}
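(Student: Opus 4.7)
The plan is to realize $D^{\bm}(P,(V^j))$ as a single quotient of a disjoint union of copies of $P$, indexed by $\prod_j \ZZ/2m_j$, in which the symmetry among the distinguished pairs of surfaces becomes manifest. Equation~\eqref{Double replication equation} is the essential commutativity statement; Equation~\eqref{Replication commuting equation} will then follow by iteration.

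First I would unfold the definition of a single doubling. By Equation~\eqref{2n replication equation}, the $(\ell-1)$-piece $D^{2m'}_{k'}(P,(V^j))$ is a quotient of $\coprod_{b \in \ZZ/2m'} P$ in which only the surfaces $V^{2k'-1}$ and $V^{2k'}$ participate in identifications, while each remaining surface $V^{j}$ descends to the quotient as $2m'$ copies of $V^{j}$ glued along the traces $V^{j} \cap V^{2k'-1}$ and $V^{j} \cap V^{2k'}$. Applying $D^{2m}_{\widetilde k}$ to this, and separately applying $D^{2m'}_{\widetilde{k}'}$ to $D^{2m}_{k}(P,(V^j))$, both sides of Equation~\eqref{Double replication equation} can then be described as quotients of the common disjoint union $\coprod_{(a,b) \in \ZZ/2m \times \ZZ/2m'} P$ by two families of identifications: one along $V^{2k-1}, V^{2k}$ relating copies with fixed $b$-coordinate, and another along $V^{2k'-1}, V^{2k'}$ relating copies with fixed $a$-coordinate.

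The key observation is that these two families act on disjoint pairs of boundary surfaces of each copy of $P$, so the equivalence relation they generate on $\coprod_{(a,b)} P$ is independent of the order in which the quotients are performed. This is the heart of the proof of Equation~\eqref{Double replication equation}. The remaining surfaces $V^{j}$ for $j \notin \{2k-1,2k,2k'-1,2k'\}$ descend in the same way on both sides, and the index shifts $\widetilde k, \widetilde{k}'$ simply reflect the bookkeeping convention that removing a pair from the middle of the list causes later indices to shift down by one.

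I expect the main subtlety to arise at the 1-manifolds where distinct pairs of surfaces meet, such as points of $V^{2k-1} \cap V^{2k'-1}$, where both families of identifications act simultaneously. To handle this I would invoke the general principle that a quotient by an equivalence relation is insensitive to the order in which its generating relations are imposed: the resulting equivalence closure on $\coprod_{(a,b)} P$ is the same either way, so nothing is lost at such coincidence loci. Once Equation~\eqref{Double replication equation} is established, Equation~\eqref{Replication commuting equation} follows by induction on $\ell$: repeatedly transposing adjacent doublings via Equation~\eqref{Double replication equation} allows one to move the $k$-th doubling into the outermost slot of the iterative definition of $D^\bm$, yielding the claimed equality.
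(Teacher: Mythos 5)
Your proposal is correct and matches the paper's own argument: both realize the two sides of Equation \eqref{Double replication equation} as the same quotient of a grid of $4mm'$ copies of $P$ by order-independent east--west and north--south identifications (including at the coincidence loci), and both deduce Equation \eqref{Replication commuting equation} by iterating adjacent transpositions of the doubling operations. No gaps.
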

\begin{proof}
    Note that both sides of \eqref{Double replication equation} are expressed as a quotient of the disjoint union of $4mm'$ copies of $P$.
    For the left hand side, we first quotient together subspaces of $2m'$ copies of $P$ along $V^{2k' - 1}$ and $V^{2k'}$, then we quotient together such subspaces along the images of $V^{2k-1}$ and $V^{2k}$.
    The right hand side is analogous, with $(m',k')$ and $(m,k)$ interchanged.
    
    This may be expressed in both cases as a quotient of the disjoint union of $4mm'$ copies of $P$, arranged in a ``grid,'' where a point is either identified with nothing, identified with a point on the east or west, or identified with a point on the north or south (with combinations of the last two allowed).
    These relations are the same in both cases, so the resulting manifolds are diffeomorphic;
    furthermore, this diffeomorphism clearly carries the piece surfaces to each other, so the resulting pieces are isomorphic.
    
    We prove Equation \eqref{Replication commuting equation} via a generalization. Each permutation $\pi \in S_\ell$ defines a manifold $D^{\bm}_\pi(P,(V^j))$, where one replicates over the $\pi(1)$ surface first, the $\pi(2)$ second etc. 
    Equation \eqref{Double replication equation} yields a diffeomorphism $D^{\bm}_\pi(P,(V^j)) \simeq D^{\bm}_{\tau \pi}(P,(V^j))$ for any simple transposition $\tau$.
    Since simple transpositions generate $S_\ell$, this implies that $D^{\bm}_\pi(P,(V^j)) \simeq D^{\bm}_{\rho}(P,(V^j))$, for any $\pi,\rho \in S_\ell$, which generalizes Equation \eqref{Replication commuting equation}.
\end{proof}

By similar logic to the $\ell = 1$ case, an $m$-sheeted separating starburst in an $(\ell-1)$-piece $\prn{M,\prn{V^j}_{j=1}^{2(\ell-1)}}$ yields a cycle of $2m$ many $\ell$-pieces $\prn{M_i,\prn{V_i^j}_{j=1}^{2\ell}}$. We define $H$ to be the closure of $\partial M \setminus \cup_{j = 1}^{2(\ell-1)} V^j$. We denote the closure of the complement of $V^1_i \cup V^2_i$ in $\partial M_i$ by $H_i$. Note that $H_i \subset \partial M$.


We generalize Theorem \ref{Main theorem separating} directly to $\ell$-pieces in the following theorem.
\begin{theorem} \label{Main theorem separating inductive}
    Let $(M,\bS)$ be an $m$-sheeted separating starburst with all boundary components tori. Suppose that $\prn{M,\prn{V^j}_{j=1}^{2(\ell-1)}}$ is an $(\ell-1)$-piece, and suppose the associated $\ell$-pieces $(M_i,\prn{V_i^j}_{j=1}^{2\ell})$ are each $(2m_1,\dots,2m_\ell)$-hyperbolic and  $V^{2j -1}_i \cap V^{2j}_i$ does not intersect $H_i$ for all $j$.
    Then, $(M,\prn{V_i}_{i=1}^{2(\ell-1)})$ is $(2m_1,\dots,2m_{\ell-1})$-hyperbolic with volume satisfying
    \[   
        \vol^{(2m_1,\dots,2m_{\ell-1})}\prn{M} \geq \sum_{i=1}^{2m_\ell} \vol^{(2m_1,\dots,2m_\ell)}\prn{M_i}.
    \]
    This equality is attained if and only if each $\Sigma \in \bS$ is totally geodesic in $M$.
\end{theorem}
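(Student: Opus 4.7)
The plan is to reduce to the base case Theorem \ref{Main theorem separating} by first performing the $(2m_1,\dots,2m_{\ell-1})$-replication, thereby converting the $\ell$-piece situation into a 1-piece situation. Concretely, define $M' := D^{(2m_1,\dots,2m_{\ell-1})}(M)$, and for each $i$, let $M'_i$ denote the 1-piece obtained from the $\ell$-piece $M_i$ by replicating over the first $\ell-1$ surface pairs, leaving only the ``starburst pair'' $(V^{2\ell-1}_i, V^{2\ell}_i)$ intact. By Proposition \ref{Replication commuting proposition}, these replications commute, so $D^{2m_\ell}(M'_i)$ is canonically identified with $D^{(2m_1,\dots,2m_\ell)}(M_i)$, which is hyperbolic by hypothesis; hence each $M'_i$ is $2m_\ell$-hyperbolic.

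The main technical step is to verify that the original starburst $(M,\bS)$ lifts to an $m_\ell$-sheeted separating starburst $(M',\bS')$ in $M'$, whose associated pieces are precisely $\{M'_i\}_{i=1}^{2m_\ell}$ and whose intersection locus $E'$ does not meet $\partial M'$. The replication $D^{(2m_1,\dots,2m_{\ell-1})}$ is a cyclic gluing of copies of $M$ along the subsurfaces $V^1,\dots,V^{2(\ell-1)} \subset \partial M$, so each $\Sigma_k \in \bS$, which is properly embedded in $M$, lifts to a (possibly disconnected) surface $\Sigma'_k \subset M'$. The hypothesis that $V^{2j-1}_i \cap V^{2j}_i$ misses $H_i$ for $j < \ell$ guarantees that the replication gluings happen away from the starburst locus $E$, so the local product neighborhood model $(D \times E, S_m \times E)$ is preserved under the identifications, and $\bS'$ inherits the starburst structure. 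The same hypothesis for $j = \ell$ forces $E$ to miss $H_i$ and hence, after replication, $E'$ to miss $\partial M'$, which is precisely the hypothesis required by the base theorem.

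Granting this, Theorem \ref{Main theorem separating} applied to $(M',\bS')$ yields hyperbolicity of $M'$ (equivalently, $(2m_1,\dots,2m_{\ell-1})$-hyperbolicity of $M$), together with the inequality $\vol(M') \geq \sum_{i=1}^{2m_\ell} \vol^{2m_\ell}(M'_i)$. Dividing both sides by $\prod_{j=1}^{\ell-1}(2m_j)$ and applying the definitions of $\vol^{(2m_1,\dots,2m_{\ell-1})}(M)$ and $\vol^{(2m_1,\dots,2m_\ell)}(M_i)$ gives the claimed bound, since these normalization factors differ by exactly $\prod_{j=1}^{\ell-1}(2m_j)$. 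The equality clause follows from the equality case of Theorem \ref{Main theorem separating}: equality holds iff each $\Sigma'_k \in \bS'$ is totally geodesic in $M'$, and since total geodesicity is local and the replication acts by local isometries on the constituent pieces, this is equivalent to each $\Sigma_k \in \bS$ being totally geodesic in $M$, meaning in the hyperbolic structure pulled back from $M'$. The main obstacle I expect is the careful bookkeeping in the second paragraph, verifying that the local starburst neighborhood model survives the $(2m_1,\dots,2m_{\ell-1})$-replication and that the replicated pieces assemble correctly; this is precisely the role of the technical hypothesis $V^{2j-1}_i \cap V^{2j}_i \cap H_i = \emptyset$, which excludes the only configurations in which the replication gluings could interfere with the starburst structure.
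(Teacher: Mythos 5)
Your proposal is correct and follows essentially the same route as the paper: the paper also reduces to Theorem \ref{Main theorem separating} by replicating over the non-starburst surface pairs and invoking Proposition \ref{Replication commuting proposition}, the only difference being that it organizes this as an induction on $\ell$, replicating over one pair at a time, whereas you perform all $\ell-1$ replications in a single step before applying the base theorem. The normalization of volumes and the treatment of the equality clause are handled identically.
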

\begin{proof}
For brevity, we will write $D^{m_1}(V^j)$ to replace $D^{m_1}(V^j,(V^1,V^2))$.

    We prove this theorem inductively. In the case $\ell = 1$, 
    the result is given by Theorem \ref{Main theorem separating}. 
    
    Suppose inductively that Theorem \ref{Main theorem separating inductive} is satisfied for starbursts in a $k$-piece for $k \leq \ell - 2$.
    The definition
    \[
        D^{(2m_1,\dots,2m_\ell)}(P,\prn{V^1,\dots,V^{2\ell}}) = D^{(2m_2,\dots,2m_{\ell})}\prn{D^{2m_1}(P,\prn{V^{1},V^{2}}),\prn{D^{m_1}(V_3),\dots,D^{m_1}(V^{2\ell})}}
    \]
    and Proposition \ref{Replication commuting proposition} together imply that  the $(2m_1,\dots,2m_\ell)$-hyperbolicity of each $\ell$-piece is equivalent to $(2m_2,\dots,2m_\ell)$-hyperbolicity of pieces corresponding with a separating starburst $D^{2m_1}(\bS)$ in the $(\ell-2)$-piece $\prn{D^{2m_1}(M),(D^{m_1}(V_3),\dots,D^{m_1}(V^{2(\ell-1)}))}$.
    Our inductive hypothesis then implies that $\prn{D^{2m_1}(M),(D^{m_1}(V_3),\dots,D^{m_1}(V^{2\ell})}$ is $(2m_2,\dots,2m_\ell)$-hyperbolic, or equivalently, that $(M,(V^1,\dots,V^{2\ell}))$ is $\bm$-hyperbolic.
    
    A similar argument yields volume bounds:
    \begin{align*}
        2m_1 \cdot \vol^{(2m_1,\dots,2m_{\ell-1})}(M,\prn{V^j}_{j=1}^{2(\ell-1)}) &= \vol^{(2m_2,\dots,2m_{\ell-1})}\prn{D^{2m_1}(M,\prn{V^{1},V^{2}}),\prn{V^j}_{j=3}^{2(\ell-1)}}\\
        &\geq \sum_i \vol^{(2m_2,\dots,2m_{\ell})}\prn{D^{2m_1}(M_i,\prn{V_i^{1},V_i^{2}}),\prn{D^{m_1}(V_i^j)}_{j=3}^{2\ell}}\\
        &= 2m_1 \cdot \sum_i  \vol^{(2m_1,\dots,2m_\ell)}(M_i,\prn{V_i^j}_{j=1}^{2\ell})).
    \end{align*}
    
    In fact, this equality is attained if and only if equality is attained in the middle inequality above.
    By the inductive hypothesis, this is true if and only if each doubled surface in $\bS$ is totally geodesic in $D^{2m_1}(M,(V^1,V^2))$.
    This is true if and only if each surface in $\bS$ is totally geodesic in $M$, as desired.
\end{proof}




\section{The main theorem in the general case}\label{Main theorem nonseparating section}
\subsection{Definitions}
In Section \ref{Main theorem separating section}, we defined suitable collections of surfaces, called \emph{$m$-sheeted separating starbursts}, which separate a 3-manifold $M$ into a disjoint union of $2m$ pieces such that hyperbolicity of the pieces implies hyperbolicity of $M$.
In fact, we may summarize this with one datum rather than $2m$:
defining the disjoint union of $\ell$-pieces by
\[
    (P,(V^j)) \sqcup (P',(V'^j)) := (P \sqcup P', (V^j \sqcup V'^j)).
\]
For any $\bm \in 2\ZZ^{\ell}_{>0}$, there is a diffeomorphism
\[
    D^{\bm}\prn{(P,(V^j)) \sqcup (P',(V'_j))} \simeq D^{\bm}\prn{P,(V^j)} \sqcup D^{\bm}\prn{P',(V^j)}.
\]
In particular, since we have $(M \dbs \bS,(V^1,V^2)) = \coprod_{i=1}^{2m} (M_i,(V_i^1,V_i^2))$ as in Construction \ref{Piece construction}, we may summarize Theorem \ref{Main theorem separating} by stating that $2m$-hyperbolicity of $(M \dbs \bS,(V^1,V^2))$ implies hyperbolicity of $M$, with volume satisfying
\[
        \vol(M) \geq \vol^{2m}(M \dbs \bS).
\]
In the case of a starburst in an $(\ell-1)$-piece, we further have that $(2m_1,\dots,2m_\ell)$-hyperbolicity of the $\ell$-piece $\prn{M \dbs \bS, (V^j)_{i=1}^{2\ell}}$ implies $(2m_1,\dots,2m_{\ell-1})$-hyperbolicity of $\prn{M,(V^j)_{i1}^{2(\ell-1)}}$, with volumes satisfying
\[
    \vol^{(2m_1,\dots,2m_{\ell-1})}\prn{M} \geq \vol^{(2m_1,\dots,2m_\ell)}\prn{M \dbs \bS}.
\]

This formulation is suggestive;
the contruction of $(M \dbs \bS, (V^j))$ doesn't appear to depend on the fact that $M \dbs \bS$ separates into $M_i$ in an essential way.
To expand on this, we make the following definition, which relaxes orientability of $M$ and the separation of $M$ into a cycle of pieces.
\begin{definition}\label{Starburst definition}
    Let $M$ be a 3-manifold and let $\bS = \cbr{\Sigma_1,\dots,\Sigma_m}$ be a set of properly embedded two-sided surfaces in $M$.
    Let $E := \bigcup_{i\neq j} \Sigma_i \cap \Sigma_j$ be the locus at which at least two surfaces intersect, and suppose that $E = \Sigma_i \cap \Sigma_j$ for each $i \neq j$.
    Let $\rho:\bigcup_i \Sigma_i \dbs E \rightarrow \cbr{+,-}$ be a ``two-coloration'' of the connected components of $\Sigma_i \dbs E$ for each $i$.
    We say that the data $(M,\bS,\rho)$ constitute an \emph{n-sheeted starburst in $M$} (often, simply a \emph{starburst}) if the following conditions are met:
    \begin{enumerate}
        \item the intersection locus $E$ is a compact one-manifold (possibly disconnected and possibly with boundary) properly embedded in $M$;
        \item \label{Diffeomorphism condition} there is a neighborhood $E \subset U \subset M$ and a diffeomorphism of pairs $(U,U \cap\prn{\bigcup_{i=1}^m \Sigma_i}) \simeq (D \times E, \bigcup_{i=1}^m L_i \times E)$ which restricts to a diffeomorphism $\Sigma_i \cap U \simeq L_i \times E$ for each $i$, with notation defined in Figure \ref{newstarburstfigure} and \ref{newstarburstfigure 2}
        ;
        \item \label{Coloration condition} any two connected components of $\Sigma_i \dbs E$ who neighbor the same points of $E$ in $M$ have opposing colorations via $\rho$;
        \item \label{Compatible direction condition} for each connected component of $E$, given the induced orientations of normal bundles and labels of the lines $L_i \subset S_m$, all orientations of normal bundles on a $+$-labelled component of $L_i \setminus E$ share a direction (either clockwise or counterclockwise).
    \end{enumerate}
\end{definition}
\begin{remark}
    We may formally define "all $+$-labelled components of $L_i \setminus E$ share direction" several equivalent ways;
    one convenient one is to note that the subspace of points $d(\mathbf{L})$ of $\partial(D \dbs \bigcup_i L_i)$ corresponding with $\bigcup_i L_i$ have $2n$ connected components, and each surface together with an orientation of its normal bundle determines two connected components of $d(\mathbf{L})$ having either positive orientation and $+$ label or negative orientation and $-$ label, as in Construction \ref{Piece construction}.
    One may replace condition \eqref{Compatible direction condition} with the requirement that this correspondence identifies all $2n$ components of $d(\mathbf{L})$, or equivalently, than no $+$ and $-$ labelled components of half-lines identify the same component.
\end{remark}

\begin{figure}
    \centering
    \includegraphics[scale=0.6]{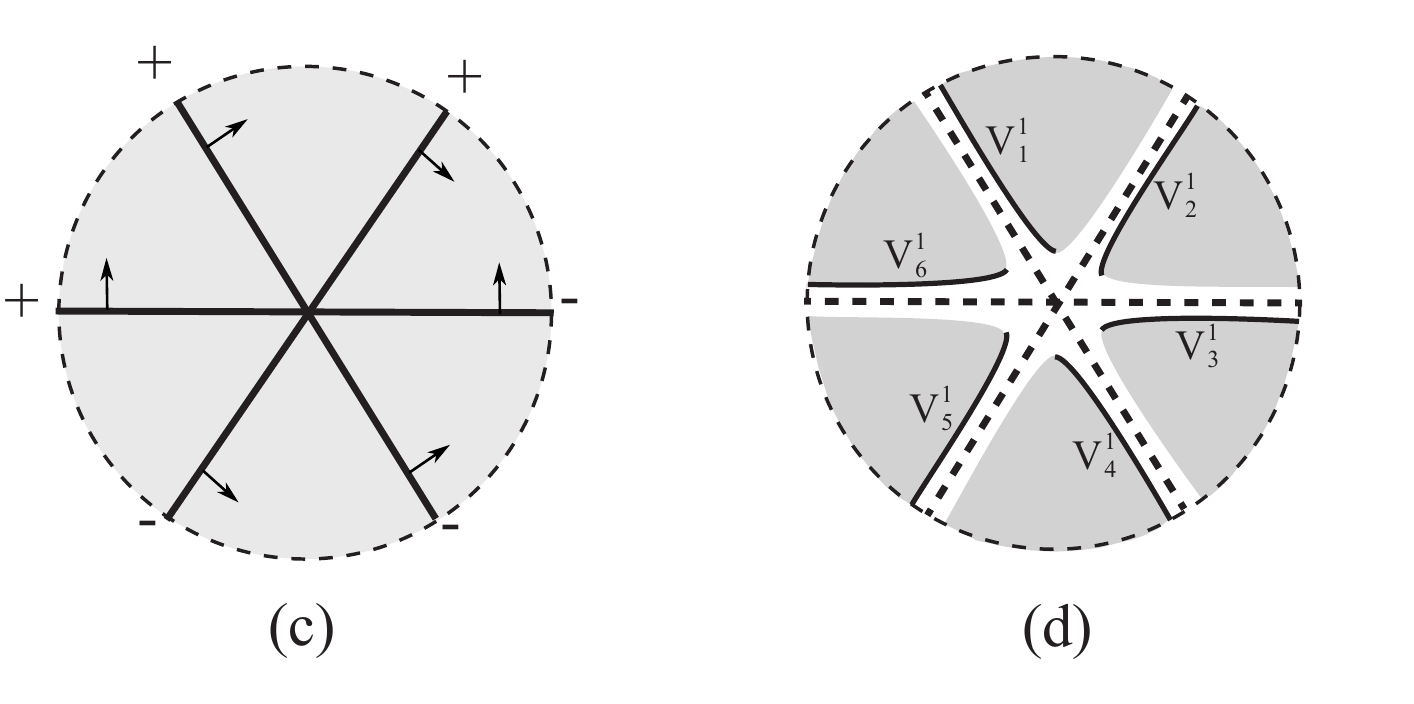}
    \caption{Continuation of Figure \ref{newstarburstfigure}, now using the orientation and labelling data.
    Part (c) includes arrows denoting the orientation, and labels denoting $\rho$.
    Part (d) shows the parts of surfaces near $E$ comprising $V^1_i$.
    }
    \label{newstarburstfigure 2}
\end{figure}

For this paragraph, let $(M,\bS)$ be a separating starburst. 
Then, by definition, $(M,\bS)$ satisfies the first two conditions for a starburst.
The portion $d(\Sigma_i) \setminus d(E) \subset \partial M$ corresponding with $\Sigma_i$ can be expressed as the union of $V^1_i$, $V^2_{i-1}$, $V^1_{i+n}$, and $V^2_{i + n - 1}$, with the first pair and last pair of surfaces corresponding with a common component of $\Sigma_i \dbs E$.
The parts $V^2_i$ all point \emph{clockwise} local to $E$, so we may choose the labeling $\rho|_{\Sigma_i}:\Sigma_i \dbs E \rightarrow \cbr{+,-}$ so that all of $V^2_i$ corresponds with either positive orientation and $+$-labelling or negative orientation and $-$-labelling;
we may further choose labelling so that $V^1_i$ corresponds with either positive orientation and $-$-labeling or negative orientation and $+$-labelling.
Conditions \eqref{Coloration condition} and \eqref{Compatible direction condition} then follow from the fact that $\bS$ separates $M$ into $M_i$.
Hence separating starbursts are themselves starbursts, and we may reverse-engineer the notion of \emph{clockwise} or \emph{clockwise side} from orientations and labelling.
We will use this to extend the piece structure from separating starbursts to all starbursts.


Henceforth let $(M,\bS)$ be an $m$-sheeted starburst.
We now explicitly construct surfaces rendering $M \dbs \bS$ to be a piece, generalizing the piece structure on each $M_i$ in the separating case.
In the same sense that the outer boundary of the saucer containing a tangle is separated by copies of $E$ into a \emph{clockwise} and \emph{counterclockwise side}, we define the piece structure by using the 2-coloration and orientation of the normal bundles of $\Sigma_i$ to choose surfaces which, local to the points corresponding with $E$, appear to distinguish the \emph{clockwise} and \emph{counterclockwise side} as well.

\begin{construction}\label{Piece construction}
For $\Sigma_i \in \bS$, let $d(\Sigma_i)$ be the union of the points in $\partial (M \dbs \bS)$ which correspond with $\Sigma_i$, and let $d(\bS)$ be the union of the $d(\Sigma_i)$. Let $d(E)$ be defined similarly for $E$. 

Every point of $d(\Sigma_i) \setminus d(E)$ corresponds with a point in $\Sigma_i \setminus E$ and for a point in $\Sigma_i \setminus E$, there are two points in $d(\Sigma_i) \setminus d(E)$ which correspond to it.
One comes about as a boundary point on the positively oriented side of a normal bundle on $\Sigma_i$ and one comes about from the negatively oriented side, so we may accordingly call a point in $d(\Sigma_i) \setminus d(E)$ positive or negative. 
By nature of the orientation, if two points in $d(\Sigma_i) \setminus d(E)$ reside in the same connected component, they must either both be positive or both be negative. 
Hence we may refer to a positively-oriented component of $d(\Sigma_i) \setminus d(E)$ as one whose points are positive away from $d(E)$, and a negatively-oriented component otherwise.

Similarly, every component of $d(\Sigma) \setminus d(E)$ has points corresponding with a unique component of $\Sigma_i \dbs E$, so there is a two-coloration $\bigcup_i d(\Sigma_i) \setminus d(E) \rightarrow \cbr{+,-}$ induced by $\rho$, and we will also call this $\rho$.
We say that a component of $d(\bS)\setminus d(E)$ (that is, a component of some $d(\Sigma_i) \setminus d(E)$) as \emph{counter-clockwise-pointing} if it is either $+$-labeled and positively-oriented or $-$-labelled and negatively-oriented, and \emph{clockwise-pointing} otherwise.
Let $V_i^2$ be the union of $E$ and the clockwise-pointing components of $d(\Sigma_i) \setminus d(E)$, and let $V_i^1$ be the union of $E$ and the counterclockwise-pointing components of $d(\Sigma_i) \setminus d(E)$.
Let $V^j := \bigcup_i V_i^j$ be the union of the clockwise-pointing or counterclockwise-pointing components.
We visualize this via Figure \ref{newstarburstfigure 2}.

By condition \eqref{Compatible direction condition}, each component of $d(E)$ separates a clockwise-pointing component and a counterclockwise-pointing component;
hence $V^1 \cap V^2 = d(E) = \bigsqcup_{i=1}^{2m} E$, which is a 1-manifold.
\end{construction}

\begin{remark}
    In the case that every connected component of $\Sigma_i$ intersects $E$, note that every allowable choice of orientations of normal bundles and labellings identifies the same piece structure on $M \dbs \bS$;
    in general, varying orientations and labellings allows one to construct up to $2^C$ many isomorphism classes of piece structures, where $C$ is the number of connected components of $\bigcup_i \Sigma_i$ not intersecting $E$.
\end{remark}

This construction yields a piece structure $(M \dbs \bS, (V^1, V^2))$.
Hence there is a notion of replication and $2m$-hyperbolicity of a starburst, and we may ask whether generalizations of Theorem \ref{Main theorem separating} applies in this case. But we would like to allow the intersection set $E$ of the surfaces to intersect the boundary of the manifold. This is the topic of the next subsection.

\subsection{Boundary compatibility and the torus boundary case}

In Theorem \ref{Main theorem separating}, we assumed that the manifold $M$ had only torus boundary and $E$ did not intersect the boundary. We shortly generalize these conditions.  In the case that $E$ intersects a torus boundary of $M$, let $G$ be the graph on $T$ formed by the intersection of $\bigcup_i \partial \Sigma_i$ with $T$. We prove the following result.

\begin{lemma}\label{Eintersectstorusboundary}
    If $(M, \bS)$ is an $m$-sheeted separating starburst, $E$ intersects a torus boundary $T$ of $M$, and the $2m$-replicants of the corresponding pieces are all hyperbolic, then $G$ is connected, $m$ = 2, and all of the complementary regions to $G$ on $T$ are squares. 
\end{lemma}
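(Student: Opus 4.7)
The plan is to extract topological constraints from the Euler characteristic of $T$, combined with the requirement that each boundary component of the hyperbolic $2m$-replicant coming from $T$ must be a torus or Klein bottle cusp cross-section.

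First I would set up Euler-characteristic bookkeeping on $T$. Each vertex of $G$ lies in $E \cap T$, where $m$ of the surfaces $\Sigma_i$ meet locally, so $G$ is $2m$-regular. Degree counting yields $|E(G)| = m\,|V(G)|$, and inclusion-exclusion combined with $\chi(T)=0$ gives $\sum_f \chi(\overline{f}) = (m-1)\,|V(G)|$ summed over complementary regions $f$. The handshake identity $\sum_f n(f) = 2|E(G)| = 2m\,|V(G)|$ also holds, where $n(f)$ denotes the total number of edges on $\partial f$. Moreover, for each face $f \subset M_j$, traversing $\partial f$ the edges alternate between $V_j^1$- and $V_j^2$-type, so $n(f)$ is even.

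Next I would analyze, for each face $f \subset T$ lying in piece $M_j$, the corresponding component $S_f$ of the $T$-part of $\partial D^{2m}(M_j)$. This $S_f$ arises from gluing $2m$ copies of $\overline{f}$ cyclically, with adjacent pairs identified along $V_j^1$- or $V_j^2$-type boundary edges per the replicant pattern, and with all $2m$ copies of each corner identified to a single point. A direct CW-cell count yields
\[
    \chi(S_f) \;=\; 2m\,\chi(\overline{f}) \;-\; n(f)(m-1),
\]
specializing to $2k + 2m - 2mk$ when $\overline{f}$ is a disk with $n(f) = 2k$.

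The crucial step is to force $\chi(S_f) = 0$ for every face. Hyperbolicity of the replicant rules out $\chi(S_f) > 0$ (spherical boundary), and because $T$ is a torus cusp of $M$, I would argue that the replicant gluings preserve the local cusp structure at $T$, so each $S_f$ must itself be a cusp cross-section (torus or Klein bottle) rather than a higher-genus totally geodesic boundary. I expect this to be the main obstacle, since it requires careful attention to how the hyperbolic metric behaves near $T$ under replication. Once $\chi(S_f)=0$ is established, the equation $2m\,\chi(\overline{f}) = n(f)(m-1)$ forces every face to be a disk (otherwise left side $\leq 0$ and right side $>0$, a contradiction), and $n(f) = 2m/(m-1)$ combined with the evenness of $n(f)$ admits the unique positive-integer solution $(m, n(f)) = (2, 4)$. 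Hence $m = 2$ and every complementary region is a square. Connectedness of $G$ then follows formally: each square face's boundary is a single circle lying in one connected component of $G$, so if $G$ were disconnected, $T$ would decompose as a disjoint union of closed subsurfaces indexed by the components, contradicting the connectedness of $T$.
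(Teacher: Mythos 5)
Your route is genuinely different from the paper's: the paper never computes an Euler characteristic, but instead argues directly with curves on $T$, using spheres in the replicants to rule out trivial intersection curves and bigon complementary regions, deducing that each $\partial\Sigma_i\cap T$ is a family of parallel essential curves with pairwise distinct slopes meeting minimally, and then enumerating cases ($m=2$ gives squares; $m=3$ gives triangles, excluded because the cyclic order of pieces around each vertex forces $G$ to be bipartite; $m\geq 4$ forces bigons). Your double-counting setup, the identity $\chi(S_f)=2m\,\chi(\overline{f})-n(f)(m-1)$, and the endgame starting from $\chi(S_f)=0$ are all correct, and the evenness of $n(f)$ plays exactly the role of the paper's bipartiteness observation.

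The gap is precisely where you flagged it, and the justification you propose does not work. At the point where this lemma is used, $M$ carries no hyperbolic metric --- the lemma is an ingredient in \emph{proving} that $M$ is hyperbolic --- so ``$T$ is a torus cusp of $M$'' is not available, and there is no cusp structure for the replicant gluings to preserve. Moreover, the assertion that replication preserves boundary genus is exactly what is in question: the paper introduces the notion of boundary-compatibility precisely because $D^{2m}$ can in general raise the genus of boundary components, and a hyperbolic (i.e.\ tg-hyperbolic) replicant is permitted to have higher-genus totally geodesic boundary. So hyperbolicity of $D^{2m}(M_j)$ only yields $\chi(S_f)\leq 0$ (no sphere or projective-plane boundary components), not $\chi(S_f)=0$.

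The good news is that your own bookkeeping closes the gap. Summing your formula over all complementary regions and using $\sum_f\chi(\overline{f})=(m-1)\,|V(G)|$ together with $\sum_f n(f)=2m\,|V(G)|$ gives
\[
\sum_f \chi(S_f)\;=\;2m(m-1)\,|V(G)|\;-\;(m-1)\cdot 2m\,|V(G)|\;=\;0,
\]
and since each summand is $\leq 0$, each vanishes. With that substitution the argument is complete and arguably cleaner than the paper's case analysis. One loose end you should still address: the cell counts and the evenness of $n(f)$ implicitly assume that every component of every $\partial\Sigma_i\cap T$ passes through a vertex of $G$; a closed curve component disjoint from $E$ would require separate (easy) treatment, e.g.\ by noting that a face bounded by a vertexless circle replicates to spheres in $\partial D^{2m}(M_j)$.
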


\begin{proof}
Note that every surface $\Sigma_i$ must touch every vertex of the graph.  Also, hyperbolicity of the replicants means that the replicant cannot have spherical boundaries. This would, for instance occur, if any complementary region of $G$ on $T$ is a bigon. 

We first consider the simple closed curves that make up $\Sigma_i  \cap T$ for a fixed value of $i$. None can be trivial curves on $T$ as follows. If there were trivial curves, take an innermost such. Let $D$ be the disk it bounds on $T$. So $D$ has no other curves from $\Sigma _i$ inside it. Therefore it has no vertices in its interior, since every vertex must touch  a curve from $\Sigma_i$. If $D$ has vertices on its boundary, there are curves from each of the other surfaces that cross it. But that there are no vertices inside $D$ implies that we can take an outermost arc from the other surfaces inside $D$ and it forms a bigon with an arc on the boundary of $D$, which will generate a sphere in the boundary of the corresponding replicant, a contradiction to its hyperbolicity. 

On the other hand, suppose there are no vertices on $\partial D$. Then when we form $D^{2m} (M_i)$ for the  piece containing $D$ on its boundary, we again obtain a sphere boundary, contradicting hyperbolicity of the replicants. 

Hence,  for all $i$, every curve in $\Sigma _i \cap T$ must be nontrivial on $T$. So for each fixed $i$, the curves in $\Sigma_i \cap T$ must all be parallel $(p_i, q_i)$-curves. Further, the only options for complementary regions to $G$ are disks and annuli. 

If there is an annulus in $M_i \cap T$, take its boundary to be meridians for convenience. Then for all  $i$ and all of the curves in $\Sigma_i \cap T$, $q_i = 0$. That is to say, for all $i$, all simple closed curves in $\Sigma_i\cap T$ are meridians. 

Consider the first two surfaces $\Sigma_1$ and $\Sigma_2$.  The fact that all curves in their intersections with $T$ are meridians implies that any two that cross must do so an even number of times.  Further, for some pair of curves $\gamma_1$ in $\partial \Sigma_1$ and $\gamma_2$ in $\partial \Sigma_2$, there must be an  arc $\alpha_1$ on $\gamma_1$ and an arc $\alpha_2$ on $\gamma_2$ such that they form a bigon with sides that are not crossed by any other curve from $\partial \Sigma_1$ or $\partial \Sigma_2$. Additional arcs from other surfaces could enter the bigon through one of the vertices. But since there can be no vertices inside the bigon as there are no curves from $\partial \Sigma_1$ or $\partial \Sigma_2$ inside it, any such arc must just exit out the opposite vertex. Thus, they decompose the bigon into thinner bigons, which again lift to spheres in the corresponding replicants, contradicting their hyperbolicity.  
So there are no annuli.

We now show that $(p_i, q_i) \neq (p_j, q_j)$ for $i \neq j$. The proof is the same argument as above. Namely if curves from $\partial \Sigma_1$ and $\partial \Sigma_2$ on $T$ are parallel, they must cross, and then we can find an innermost bigon between the two types of curves and there can be no other boundary curves of $\Sigma_1$ or $\Sigma_2$ inside the bigon formed. So only curves from one vertex to the other can appear in the bigon, which makes a thinner bigon, a contradiction. 

So now if we consider a $(p_1, q_1)$-curve $\gamma_1$ and a $(p_2, q_2)$-curve $\gamma_2$ from two different surfaces,  they must cross. But in fact they must cross a minimal amount of times, namely $|p_1 q_2 - p_2 q_1|$ times, as if not, there will be a bigon formed between them with no other curves from these two surfaces in the bigon and the same argument shows  there is a bigon. 

So we have collections of parallel curves for each surface. We can always choose one set of them to be thought of as meridians. Then if there are only two surfaces, they decompose the  torus into squares and $m = 2$. Then all replicants have torus boundaries corresponding to $T$.

If there are three surfaces, the first two surface boundaries decompose the torus into squares. The third surface boundaries must intersect the first two only at their intersections. Since they cannot generate bigons, they must cut diagonally across the squares. 

Note that if there were a fourth surface, it would also have to have its boundary curves on $T$ intersecting only at the existing vertices and this would force bigons. Thus, $m = 2$ or 3.

In the case $m =3$, the torus is decomposed into triangles by the graph $G$. However, there can be no odd cycles in $G$. This is because the cyclic order of pieces around a vertex must be preserved. Hence, for any edge in $G$, the cyclic orders must be clockwise around one and counterclockwise around the other. In particular, this means $G$ must be a bipartite graph, with all even-length cycles.   
\end{proof}

In particular, this lemma ensures that if $E$ intersects a torus boundary of $M$, then if the replicants are hyperbolic, as we assume in Theorem \ref{Main theorem separating},  the boundaries of the replicants of the corresponding pieces are also tori. 

We would like to generalize Theorems \ref{Main theorem separating} and \ref{Main theorem separating inductive} to manifolds with higher genus boundary. Such a manifold is said to be tg-hyperbolic if when all torus boundaries are removed, it has a hyperbolic metric such that its higher genus boundaries are totally geodesic. This is equivalent to the fact that if the manifold is doubled across its higher genus boundaries, and the torus boundaries are again removed, the resulting manifold is hyperbolic.  A tg-hyperbolic manifold has a well-defined finite hyperbolic volume. 

But in order to generalize to this situation, in the case that $E$ intersects the higher genus boundary components, we need to know that the $2m$-replicants of the corresponding pieces that intersect those boundaries also have corresponding genus greater than 1. Hence we have the following definition.

\begin{definition}Any connected sub-surface $Q_i$ of the boundary of $M_i$ that lies in the boundary of $M$ will correspond to boundary-components of $D^{2m}(M_i,(V^1_i, V^2_i))$. We define a piece $(M_i,(V^1_i, V^2_i))$ to be {\bf boundary-compatible} with the manifold $M$ from which it comes if for any connected components of $\partial M_i \subset \partial M$, the corresponding boundary components of $D^{2m}(M_i,(V^1_i, V^2_i))$ have genus one if and only if the corresponding boundary components of $M$ have genus one. Note that all of the boundary-components of $D^{2m}(M_i,(V^1_i, V^2_i))$ coming from a particular connected component $Q_i$ of $\partial M_i$ will have the same genus. 
\end{definition}

\subsection{Statement of the General Theorems}

We are now ready to state the generalization of Theorem \ref{Main theorem separating}.
We prove the following.
\begin{theorem}\label{Main theorem}
    Suppose $(M,\bS,\rho)$ is an $m$-sheeted starburst in a 3-manifold $M$, and $(M \dbs \bS,(V^1,V^2))$ is the associated piece.
    If $(M \dbs \bS, (V^1,V^2))$ are $2m$-hyperbolic and boundary-compatible with $M$, then $M$ is hyperbolic, with volume satisfying
    \[
        \vol(M) \geq \vol^{2m}(M \dbs \bS).
    \]  
    Equality is attained if and only if each surface $\Sigma_i \subset M$ is totally geodesic.
\end{theorem}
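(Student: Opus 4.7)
The plan is to adapt the proof of Theorem \ref{Main theorem separating} to this more general setting. The key subtlety is that, in the non-separating case, the cyclic word $T_1 T_2 \cdots T_{2m}$ in the algebraic framework of Section \ref{Main theorem separating section} corresponds not to $M$ itself, but to a cyclic cover $\hat M$ of $M$ whose degree is compensated in the volume accounting by the multiplicity with which each component of $M \dbs \bS$ appears in the cyclic arrangement.

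Since the piece $(M \dbs \bS, (V^1, V^2))$ is $2m$-hyperbolic by hypothesis, the replicant $D^{2m}(M \dbs \bS)$ is a finite-volume hyperbolic 3-manifold. The combinatorial reflection involutions across each doubled surface $D^{2m}(\Sigma_i)$ are realized as hyperbolic isometries by Mostow--Prasad rigidity, and Theorem \ref{fixed surface lemma} then guarantees that each $D^{2m}(\Sigma_i)$ is totally geodesic. The boundary-compatibility hypothesis ensures that the intermediate manifolds arising in cut-and-paste operations have only torus cusps, so Theorem \ref{Cutting and pasting theorem} applies at each step.

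The cyclic-word framework of Construction \ref{Cyclic word construction}, reinterpreted with each letter $T_i$ representing the $i$-th cyclic slot in the replicant (occupied by a copy of some component of $N := M \dbs \bS$), transfers essentially verbatim together with its supporting combinatorial results: Lemma \ref{Volume relation lemma separating}, Lemma \ref{Pizza inductive lemma}, and Proposition \ref{Reducability proposition}. Applying the reducibility result to $T_1 \cdots T_{2m}$ yields $2m \cdot \vol(\hat M) \geq \sum_i \vol(D^{2m}(M_i))$. Setting $d := \deg(\hat M \to M)$, one checks directly that the right-hand sum equals $d \cdot \vol(D^{2m}(M \dbs \bS))$, since each letter contributes one copy of the replicant of one component of $M \dbs \bS$, while $\vol(\hat M) = d \cdot \vol(M)$ by the covering degree. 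Hyperbolicity of $\hat M$ follows from iterated applications of Theorem \ref{Cutting and pasting theorem}, and the covering transformations of $\hat M \to M$ form a free action by isometries via Mostow--Prasad rigidity, so $M$ itself is hyperbolic. Dividing through, we obtain $\vol(M) \geq \vol^{2m}(M \dbs \bS)$, and the equality characterization tracks through as in Theorem \ref{Main theorem separating}, forcing each $\Sigma_i \subset M$ to be totally geodesic.

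The main obstacle lies in verifying the identification $\cS(T_1 \cdots T_{2m}) = \hat M$ as a cover of $M$ and the soundness of the cyclic-word machinery in the fully general setting, where $\bS$ may be non-separating, $M$ may be non-orientable, or $E$ may meet $\partial M$. Tracking the piece structure of Construction \ref{Piece construction}, the orientation data $\rho$, and---when $E$ meets $\partial M$---the combinatorial restrictions on $G$ from Lemma \ref{Eintersectstorusboundary} is essential in guaranteeing that each elementary cut-and-paste move preserves the totally-geodesic structure and that the replicants retain the correct torus-cusp boundary structure throughout the reduction.
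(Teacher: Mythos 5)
Your overall architecture matches the paper's: the paper likewise observes that the word $T_1\cdots T_{2m}$ corresponds not to $M$ but to the total space of a $2m$-fold cover of $M$ (Lemma \ref{Pieces and manifold lemma}), reuses the cyclic-word algebra of Section \ref{Main theorem separating section}, and divides out the covering degree at the end; the reduction to the orientable, torus-boundary case is carried out exactly as you indicate, via the orientable double cover (Subsection \ref{Non-orientable subsection}) and doubling across higher-genus boundary using boundary-compatibility.

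The gap is your claim that Lemma \ref{Volume relation lemma separating} ``transfers essentially verbatim.'' It does not. The separating proof of that lemma uses, in an essential way, that $\cS(w_j) \dbs F_j$ is the disjoint union of two copies of a half-manifold built from the subword $a_j$, and that one half from each of $w_1,w_2$ reassembles into $\cS(w)$ itself. When $\bS$ is non-separating this decomposition does not exist: the objects $\cSc(w)$ must be redefined as quotients of $2m$ copies of the \emph{entire} (possibly connected) $M \dbs \bS$ --- the letters do not index components of $M\dbs\bS$, contrary to your parenthetical --- and the fixed-point surface $F_j$ of the palindrome involution need not separate $\cSc(w_j)$. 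The paper's replacement (Lemma \ref{Volume relation lemma}) instead cuts $\cSc(a_1a_1^R)$ and $\cSc(a_2a_2^R)$ along the totally geodesic surfaces $F_1,F_2$, reglues to form a manifold $X$ to which Theorem \ref{Cutting and pasting theorem} applies, and then exhibits $X$ as a degree-two cover of $\cSc(a_1a_2)$, yielding $2\volc(w) = \vol(X) \geq \volc(w_1 + w_2)$. This covering-space argument is the genuinely new content of the general case, and your proposal defers it (together with the precise construction of $\cSc(w)$ from the data $\rho$ and the normal orientations, and the verification that it is well defined) to ``the main obstacle'' without supplying it. Once that lemma is in place, the remainder of your outline --- Proposition \ref{Reducability proposition}, the degree bookkeeping, and the equality analysis --- does go through as you describe.
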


Note that Lemma \ref{Eintersectstorusboundary} implies that if $m > 2$, it must be the case that $E$ intersects only higher genus components of $\partial M$ to obtain hyperbolicity of the pieces.

Similarly to the separating case, a starburst in an $(\ell-1)$-piece $(M,(V^j)_{i=1}^{2(\ell-1)})$ yields an $\ell$-piece $(M \dbs \bS, (V^j)_{i=1}^{2\ell})$, and there is an associated notion of $\bm$-hyperbolicity.
We will prove the following generalization of Theorem \ref{Main theorem separating inductive} in this case.
\begin{theorem}\label{Main theorem inductive}
    Suppose that $\prn{M,\prn{V^j}_{j=1}^{2(\ell-1)}}$ is an $(\ell-1)$-piece, suppose $(M,\bS,\rho)$ is an $m$-sheeted, boundary-compatible starburst, and suppose the associated $\ell$-piece $(M \dbs \bS,\prn{V^j}_{j=1}^{2\ell})$ is $(2m_1,\dots,2m_\ell)$-hyperbolic.
    Then, $(M,\prn{V^j}_{j=1}^{2(\ell-1)})$ is $(2m_1,\dots,2m_{\ell-1})$-hyperbolic with volume satisfying
    \[   
        \vol^{(2m_1,\dots,2m_{\ell-1})}\prn{M} \geq \vol^{(2m_1,\dots,2m_\ell)}\prn{M \dbs \bS}.
    \]
    This equality is attained if and only if each $\Sigma \in \bS$ is totally geodesic in $M$.
\end{theorem}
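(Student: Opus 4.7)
The plan is to proceed by induction on $\ell$, mirroring the proof of Theorem \ref{Main theorem separating inductive} but with Theorem \ref{Main theorem} serving as the base case in place of Theorem \ref{Main theorem separating}. For $\ell = 1$, the ``$(\ell-1)$-piece'' $M$ is simply a manifold with no distinguished surfaces, so the claim is precisely Theorem \ref{Main theorem}, delivering hyperbolicity, the volume bound, and the equality characterization in one stroke.

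For the inductive step with $\ell \geq 2$, first apply Proposition \ref{Replication commuting proposition} to commute the replications so that the pair $(V^1, V^2)$ is doubled first, yielding
\[
    D^{(2m_1,\dots,2m_\ell)}(M \dbs \bS) = D^{(2m_2,\dots,2m_\ell)}\bigl(D^{2m_1}(M \dbs \bS, (V^1,V^2)), (D^{m_1}(V^j))_{j=3}^{2\ell}\bigr),
\]
and analogously for the $(\ell-1)$-piece $M$. Next, I would argue that the starburst $(M,\bS,\rho)$ lifts to a starburst $\bigl(D^{2m_1}(M), D^{2m_1}(\bS), D^{2m_1}(\rho)\bigr)$ in the $(\ell-2)$-piece $D^{2m_1}(M)$: the defining local conditions of Definition \ref{Starburst definition} (the intersection locus being a $1$-manifold, the tubular neighborhood structure, the two-coloration condition, and the compatible-direction condition) are preserved because they are local near $E$ and replication acts as a local isometry away from $V^1 \cup V^2$. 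Boundary-compatibility of the lifted starburst with $D^{2m_1}(M)$ transfers for the same reason. The associated $(\ell-1)$-piece of this lifted starburst is precisely $D^{2m_1}(M \dbs \bS, (V^1, V^2))$ with its inherited and new distinguished surfaces.

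Applying the inductive hypothesis to this lifted starburst yields $(2m_2, \dots, 2m_{\ell-1})$-hyperbolicity of $D^{2m_1}(M)$ together with the corresponding volume bound. Translating back via the definition of $\bm$-replication and the volume normalization $\tfrac{1}{\prod_j 2m_j}$ then recovers both $(2m_1, \dots, 2m_{\ell-1})$-hyperbolicity of $M$ and the claimed volume inequality. For the equality characterization, the inductive hypothesis gives equality in the intermediate inequality iff each surface in $D^{2m_1}(\bS)$ is totally geodesic in $D^{2m_1}(M)$; since the natural quotient $D^{2m_1}(M) \to M$ is a local isometry on the complement of the replicated surfaces, this is equivalent to each surface in $\bS$ being totally geodesic in $M$. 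The main obstacle is the bookkeeping required to verify that the starburst data (particularly the coloration $\rho$ and the compatible-direction condition) lifts cleanly through replication; however, since all defining conditions are local near $E$ and replication is a local construction, no essentially new ideas beyond those already used in Theorem \ref{Main theorem separating inductive} are required.
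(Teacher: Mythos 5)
Your proposal is correct and follows essentially the same route as the paper: the paper proves this theorem by repeating the inductive argument of Theorem \ref{Main theorem separating inductive} verbatim, with Theorem \ref{Main theorem} replacing Theorem \ref{Main theorem separating} as the base case, using Proposition \ref{Replication commuting proposition} to double over $(V^1,V^2)$ first and pushing the starburst (and the equality characterization via total geodesy of the doubled surfaces) through the induction exactly as you describe.
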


In Subsection \ref{Non-orientable subsection}, we reduce Theorem \ref{Main theorem} to the case that $M$ is orientable.
Subsection \ref{Non-orientable subsection} can be skipped by the reader only interested in working with orientable 3-manifolds.

Then, in Subsection \ref{secondproofsubsection} we give an alternative proof of Theorem \ref{Main theorem separating} which directly uses the piece structure on $M \dbs \bS$, rather than individual $M_i$.
We use this as motivation to prove Theorems \ref{Main theorem} and \ref{Main theorem inductive} in Subsection \ref{Nonseparating subsection}.

\subsection{Reduction of Theorem \ref{Main theorem} to the orientable case}\label{Non-orientable subsection}
In general, we denote the orientable double cover of $M$ as $M^*$, with map $\pi_M:M^* \rightarrow M$. In the case $M$ is already orientable, $M^*$ represents the double cover obtained by taking two copies of 
$M$.

\begin{proposition}\label{Orientable double cover proposition}
    Suppose $(M,\bS)$ is an $m$-sheeted starburst.
    \begin{enumerate}
        \item Let $\Sigma_i^* := \pi_{M}^{-1}(\Sigma_i)$, let $E^* := \pi_M^{-1}(E)$, and let $\bS^* := \cbr{\Sigma^*_i}$.
        Define labels $\rho^*:\bigcup_i \Sigma^*_i \dbs E^* \rightarrow \cbr{+,-}$ by the composition
        \[
            \bigcup_i \Sigma^*_i \dbs E^* \xrightarrow \pi \bigcup_i \Sigma_i \dbs E \xrightarrow{\rho} \cbr{+,-}.
        \]
        Then, $(M^*,\bS^*,\cbr{\rho^*})$ constitute an $m$-sheeted starburst.
        \item Cutting and $2m$-replication commute with taking the orientable double cover;
        in particular, letting $(M^* \dbs \bS^*,(V^{1*},V^{2*}))$ be the piece on the orientable double cover, we have
        \[
            D^{2m}(M \dbs \bS,(V^1,V^2))^* = D^{2m}(M^* \dbs \bS^*,(V^{1*},V^{2*})).
        \]
    \end{enumerate}
\end{proposition}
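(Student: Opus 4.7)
The plan is to verify the five conditions of Definition \ref{Starburst definition} for the lifted data in part (1), and then to observe that both the cutting construction $M \dbs \bS$ and the $2m$-replication construction are built from local operations and colimits that commute with covering maps, yielding part (2).

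For part (1), I would first note that because $\pi_M$ is a local diffeomorphism and $E = \Sigma_i \cap \Sigma_j$ holds identically over $M$, we get $\Sigma_i^* \cap \Sigma_j^* = \pi_M^{-1}(\Sigma_i \cap \Sigma_j) = E^*$ for every distinct $i,j$. Properness of $E^*$ in $M^*$ and its structure as a compact $1$-manifold are inherited from $E$, because $\pi_M$ is proper and a local diffeomorphism. The neighborhood condition (2) lifts as well: if $U \subset M$ is a neighborhood of $E$ as required, then $U^* := \pi_M^{-1}(U)$ is a regular neighborhood of $E^*$ and the product-diffeomorphism structure of $U$ pulls back componentwise along each sheet of the restricted cover $U^* \to U$. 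Condition (3) transfers because $\rho^*$ is defined as the pullback of $\rho$, and the adjacency relation ``neighboring the same point of $E^*$'' projects bijectively to adjacency about the image point of $E$, so opposing colorations are preserved.

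The subtle step is condition (4) on $M^*$. On the orientable cover, the normal bundle of any properly embedded two-sided surface has a canonical orientation induced by the ambient orientation and the two-sidedness. Locally, $\pi_M$ identifies this canonical orientation on $M^*$ with the (locally-chosen) orientations on $M$ used to formulate condition (4) on $(M,\bS,\rho)$. Since condition (4) is stated pointwise along components of $E$ and each component of $E^*$ covers a component of $E$ as a local diffeomorphism, the compatibility condition for $(M^*,\bS^*,\rho^*)$ reduces to the analogous condition for $(M,\bS,\rho)$. This establishes that $(M^*,\bS^*,\rho^*)$ is an $m$-sheeted starburst.

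For part (2), I would first observe that cutting is a local operation: removing the interior of a regular neighborhood of $\bigcup_i \Sigma_i$ commutes with the local-diffeomorphism $\pi_M$, and hence $\pi_M$ descends to a double cover $M^* \dbs \bS^* \to M \dbs \bS$. The induced surfaces $V^{1*},V^{2*}$ of Construction \ref{Piece construction} applied to $(M^*,\bS^*,\rho^*)$ are then precisely the preimages of $V^1,V^2$; this uses that the clockwise/counter-clockwise distinction is determined by the orientation-plus-coloration datum that we defined to pull back along $\pi$. Since $D^{2m}$ is defined as a quotient of a disjoint union of $2m$ copies of the piece along identifications of $V^1$ and $V^2$ as in \eqref{2n replication equation}, and since taking the orientable double cover commutes with disjoint unions and with gluings along codimension-$0$ subsets of the boundary (both being colimits assembled from local data that the cover respects), we obtain a natural diffeomorphism
\[
    D^{2m}(M^* \dbs \bS^*,(V^{1*},V^{2*})) \simeq D^{2m}(M \dbs \bS,(V^1,V^2))^*,
\]
as required. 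The main obstacle I anticipate is the bookkeeping in the orientation step: one must confirm that the locally-defined orientations used to pick out the clockwise-pointing components on $M$ lift coherently to the globally-oriented $M^*$ in a way compatible with the pullback coloration $\rho^*$, and that this compatibility is preserved after cutting so that $V^{j*} = \pi_M^{-1}(V^j)$ on the nose.
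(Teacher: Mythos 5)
Your proposal is correct and follows essentially the same route as the paper: part (1) is verified by observing that each starburst condition is local and that the local data (product neighborhood of $E$, normal-bundle orientations, and the coloration) pulls back along the local diffeomorphism $\pi_M$, and part (2) follows because cutting and the quotient defining $D^{2m}$ are assembled from disjoint unions and fiberwise identifications that the double cover respects, with $V^{j*} = \pi_M^{-1}(V^j)$. The only minor imprecision is your appeal to a ``canonical'' normal orientation from the ambient orientation alone; the paper simply carries along the chosen trivializations from two-sidedness, which is all that is needed since condition (4) is checked pointwise along $E$.
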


For part (1), our argument will extend naturally to say that, for an $m$-sheeted starburst $(M,\bS,\cbr{\rho})$ and a finite covering map $\pi:M' \rightarrow M$, the set of preimages $\pi^{-1}(\bS) := \cbr{\pi^{-1}(\Sigma_i) \mid \Sigma_i \in \bS}$ endowed with labels given by composition 
\[
    \begin{tikzcd}
    \rho \pi : \bigcup_i \pi^{-1}(\Sigma_i) \dbs \pi^{-1}(E) \arrow[r,"\pi"]
    & \bigcup_i \Sigma_i \dbs E \arrow[r,"\rho"] 
    & \cbr{+,-}
    \end{tikzcd}
\]
constitute an $m$-sheeted starburst in $M'$.
Verifying this will consist mostly of noting that each condition in Definition \ref{Starburst definition} can be verified locally, sometimes verifying compatibility of local data between certain pairs of points. 

\begin{proof}[Proof of Proposition \ref{Orientable double cover proposition}]
    \textit{Part (1).}
    Each $\Sigma_i^*$ is a properly embedded surface in $M^*$.
    A trivialization of the normal bundle of $\Sigma_i$ lifts to a trivialization of the normal bundle of $\Sigma_i^*$, so each $\Sigma_i^*$ is a two-sided surface.
    
    The intersection locus $E^* = \bigcup_{i \neq j} \Sigma_i^* \cap \Sigma_j^*$ satisfies $E^* = \pi^{-1}(\Sigma_i \cap \Sigma_j) = \Sigma_i^* \cap \Sigma_j^*$ for any $i \neq j$.
    Furthermore, $E^*$ is the preimage of a 1-manifold $E$ under a covering map, so it is a 1-manifold.
    
    The existence of a diffeomorphism $(U,U \cap (\bigcup_i \Sigma_i)) \simeq (D \times E, S_m \times E)$ respecting the cyclic order is equivalent to existence, for each point $p \in E$, of a neighborhood $U_p$ of $p$ and a diffeomorphism $(U_p,U_p \cap (\bigcup_i \Sigma_i)) \simeq (D \times I, S_m \times I)$ respecting the cyclic order of $\bS$ and $S_m$ for each $p$.
    For each $p^* \in E^*$, composing with a local diffeomorphism $U_{p^*} \xrightarrow{\sim} \pi U_{p^*}$ of a neighborhood $U_{p^*}$ of $p^*$ yields such a diffeomorphism.
    
    Similarly, compatibility of the direction induced by orientations of the normal bundles of $\Sigma_i$ and by $\rho$ can be verified local to each point of $E$;
    since the orientation and labeling are compatible with a local diffeomorphism $\pi$, this compatibility on $(M^*,\bS^*,\cbr{\rho^*})$ follows from compatibility on $(M,\bS,\cbr{\rho^*})$.
    Hence we have defined an $m$-sheeted starburst in $M^*$.
    
    \textit{Part (2).}
    We may characterize the cover $(M^* \dbs \bS^*) \rightarrow M \dbs \bS$ on an open set $(M^* \setminus \bS^*) \rightarrow M \setminus \bS$.
    The latter is the restriction of the orientable double cover of $M$ onto an open set, and hence an immersed submanifold of codimension 0.
    Hence it is the orientable double cover, and we have $(M^* \dbs \bS^*) \simeq (M \dbs \bS)^*$.
    
    The orientable double cover is compatible with disjoint unions, so the induced cover on $\coprod_{i=1}^{2m} (M \dbs \bS)$ is the orientable double cover.
    The covering translations permute fibers, and the surfaces $V^{j*}$ are given by the preimages of $V^{j*}$, with the identifications defining $D^{2m}(M^* \dbs \bS^*,(V^{1*},V^{2*}))$ defined fiberwise;
    hence the orientable double cover on $\coprod_{i=1}^{2m}(M \dbs \bS)$ descends to the orientable double cover on $D^{2m}(M \dbs \bS, (V^{1*},V^{2*}))$, i.e. the cover $D^{2m}(M^* \dbs \bS^*,(V^{1*},V^{2*})) \rightarrow D^{2m}(M \dbs \bS, (V^1,V^2))$ is isomorphic to the orientable double cover, and in particular we have an diffeomorphism of their total spaces, as we wanted to show.
    \end{proof}
By this proposition, we are able to reduce to the orientable case by passing to the orientable double cover.  Hyperbolicity of $D^{2m}(M \dbs \bS,(V^1,V^2))$ implies hyperbolicity of $D^{2m}(M^* \dbs \bS^*,(V^{1*},V^{2*}))$, which implies hyperbolicity of $M^*$, which implies hyperbolicity of $M$.
Additionally, we have
\[
    \vol(M) = \frac{1}{2}\vol(M^*) \geq \frac{1}{2}\vol^{2m}(M^* \dbs \bS^*,(V^{1*},V^{2*})) = \vol^{2m}(M \dbs \bS, (V^1,V^2)).
\]
with equality attained if and only if each element of $\bS^*$ is totally geodesic in $M^*$, which occurs if and only if each element of $\bS$ is totally geodesic in $M$.

\subsection{A second proof of Theorem \ref{Main theorem separating} (the main theorem for a separating starburst)}\label{secondproofsubsection}

For the duration of this subsection and Subsection \ref{Nonseparating subsection} we fix the notation $\bm = (2m_1,\dots,2m_\ell)$, $(M,\bS)$, and $(V^j)_{i=1}^{2\ell}$ as in Theorem \ref{Main theorem}, and we assume that $\prn{M \dbs \bS,(V^j)_{i=1}^{2\ell}}$ is $\bm$-hyperbolic.

This subsection focuses mainly on giving an analog of the proof given for Theorem \ref{Main theorem separating} without constructions that essentially rely on the separating condition.
This will provide motivation for the proof of Theorem \ref{Main theorem} in Subsection \ref{Nonseparating subsection}. However, the proof of Theorem \ref{Main theorem} is independent of this subsection.

Recall the set $W$ of cyclic words on alphabet $\cbr{T_i}$ parameterizing the bracelet links formed by the tangles $\cT_i$ of a generic bracelet link on $2m$ tangles, as defined in Construction \ref{Cyclic word construction}.
We had previously defined a set of manifolds $\cbr{\cS(w) \mid w \in W}$ as the topological foundation of our strategy.

However, the manifold $\cS(w)$ consisted of individual pieces $M_i$, and hence it cannot directly generalize to $M \dbs \bS$.
Intuitively, we need to \emph{fill in} the remaining $(M_j)_{i \neq j}$ corresponding with each copy of $M_i$ in $\cS(w)$;
in the separating case, this can be done simply by defining the manifolds
\begin{equation}\label{Symmetric manifold equation}
    \cSc\prn{T_{i_1} \cdots T_{i_{2m}}} := \coprod_{j=1}^{2m} \cS\prn{T_{i_1 + j} \cdots T_{i_{2m} + j}}
\end{equation}
and extending this to $\ZZ_{\geq 0}[W]$ by $\cSc(\sum_i w_i) := \coprod_i \cSc(w_i)$.

Our manifolds are grounded to $M$ and $D^{2m}(M \dbs \bS,(V^1,V^2))$ by the following identifications:
\begin{align*}
    \cSc\prn{T_1 \cdots T_{2m}} &= 2m \cdot \cS\prn{T_1 \cdots T_{2m}}\\
    &= \coprod_{i=1}^{2m} M\\
    \cSc(T_1^{2m})
    &= \coprod_{i=1}^{2m} \cS(T_i^{2m})\\
    &= \coprod_{i=1}^{2m} D^{2m}(M_i,(V^1_i,V^2_i))\\
    &= D^{2m}(M \dbs \bS, (V^1,V^2)).
\end{align*}
Furthermore, there is a natural notion of volume
\[
    \volc(w) := \vol \cSc(w)
\]
with the volume of non-hyperbolic manifolds equal to 0.
It suffices to prove that $\volc\prn{T_1 \cdots T_{2m}} \geq \volc\prn{T_1^{2m}}$.

In fact, the relation
\[
    2\volc(a_1a_2) \geq \volc(a_1a_1^R) + \volc(a_2a_2^R)
\]
follows from the same relation for the volume of each summand in \eqref{Symmetric manifold equation}.
This together with the algebra of Subsection \ref{Separating subsection} immediately imply Theorem \ref{Main theorem separating}.

\subsection{Proof of the main theorem in the general case}\label{Nonseparating subsection}
Using the results of Subsection \ref{Non-orientable subsection}, we henceforth assume that $M$ is orientable.
By boundary compatibility, we may double across higher genus boundary, which takes the original starburst to another starburst in the resulting manifold, and henceforth assume that $M$ has torus boundary.

As exemplified in Subsection \ref{secondproofsubsection}, we may prove Theorem \ref{Main theorem} via the following strategy:
\begin{enumerate}
    \item First, we define a set of manifolds $\cbr{\cSc(w) \mid w \in W}$ such that $\cSc(T_i^{2m}) = D^{2m}(M \dbs \bS, (V^1, V^2))$ and $\cSc(T_1 \cdots T_{2m})$ is the total space of a $2m$-fold cover of $M$.
    \item Next, we prove the relation $2\vol \cSc(a_1a_2) \geq \vol \cSc(a_1a_1^R) + \vol \cSc(a_2a_2^R)$ holds in this case.
    \item Last, we use the algebraic results given in Subsection \ref{Separating subsection} to prove the theorem. 
\end{enumerate}

We recall some notation for the purpose of defining $\widetilde S(w)$.
Recall that the set $d(\Sigma_i) \subset \partial M \dbs \bS$ consists of the points corresponding with $\Sigma_i$;
for each $i$ we write $V_i^2$ for the clockwise side of the points of $d(\Sigma_i)$ and $V^{i}_1$ for the counter-clockwise side of the points of $d(\Sigma_{i+1})$.
We have $V^j = \bigsqcup_i V^j_i.$

In this language, we can describe the $2m$-replicant as
\[
    D^{2m}(M \dbs \bS, (V^1,V^2)) = \frac{\coprod_{k=1}^{2m} (M \dbs \bS)}{\prn{V_i^{1},2k} \sim \prn{V_i^{2},2k + 1} \hspace{10pt} \text{and} \hspace{10pt} \prn{V_i^{2},2k - 1} \sim \prn{V_i^{1},2k}}
\]
We will extend this to arbitrary elements of $\widetilde W$ in the following construction.
\begin{construction}
Setting $\tilde w = t_1 \cdots t_{2m}$, and fixing (arbitrarily) a choice of first letter, define 
\begin{align*}
    R^1(k,\tilde w) &:= \begin{cases}
        1 & \text{if}\hspace{.1cm} t_k = T_j \text{ for some } j\\
        2 & \text{if}\hspace{.1cm} t_k = T_j^R \text{ for some } j\\
    \end{cases}\\
     R^2(k,\tilde w) &:= \begin{cases}
        2 & \text{if}\hspace{.1cm} t_k = T_j \text{ for some } j\\
        1 & \text{if}\hspace{.1cm} t_k = T_j^R \text{ for some } j\\
    \end{cases}\\
    \text{if} \hspace{10pt} t_k \in \cbr{T_j,T_j^R} \hspace{10pt} \text{and} \hspace{10pt} t_{k+1} \in \cbr{T_{j'},T_{j'}^R} \hspace{60pt} \off(k,\tilde w) &:= j' - j
\end{align*} 
Using this language, we may define the following:
\[
    \cSc(\tilde w) := \frac{\coprod_{k=1}^{2m} (M \dbs \bS)}{\prn{V_i^{R^1(2k,\tilde w)},2k} \sim \prn{V_{i + \off(2k,\tilde w)}^{R^2(2k+1,\tilde w)},2k + 1} \hspace{10pt} \text{and} \hspace{10pt} \prn{V_i^{R^2(2k - 1,\tilde w)},2k - 1} \sim \prn{V_{i + \off(2k - 1,\tilde w)}^{R^1(2k,\tilde w)},2k}}
\]
Note that there exists a diffeomorphism $\cSc(T_1T_2) \simeq \cSc(T_1^RT_2^R)$, and hence this descends to a construction of a $W$-indexed collection of manifolds.
\end{construction}
This generalizes the construction of $\cSc(w)$ in the separating case;
for instance, in that case the identification 
\[
\prn{V_i^{R_1(2k,w)},2k} \sim \prn{V_{i+\off(2k,w)}^{R_2(2k+1,w)},2k+1}
\]
represents the gluing of the piece at index $2k$ to the piece at index $2k + 1$ within the submanifold $\cS(t_{j_1 + i} \cdots t_{j_{2m} + i}) \subset \cSc(w)$.
The reader may verify that, for each tuple $(i,j,k)$, there is a unique tuple $(i',j',k')$ such that there exists an identification $(V_i^j,k) \sim (V_{i'}^{j'},k')$ in $\cSc(w)$.

Now that we've defined $\widetilde S(w)$, we verify its relationship with $(M,\bS)$ in the following lemma.
\begin{lemma}\label{Pieces and manifold lemma}
    Let $(M,\bS,\cbr{\rho})$ be an $m$-sheeted starburst, and let $\cSc(w)$ be the corresponding word for each $w \in W$.
    \begin{enumerate}
        \item For each $j$ there is a diffeomorphism
        \[
            \cSc\prn{T_j^{2m}} \simeq D^{2m}(M \dbs \bS)
        \]
            so that $\cSc\prn{T_j^{2m}}$ is hyperbolic if and only if $M \dbs \bS$ is $2m$-hyperbolic, in which case we have
        \[
            \volc\prn{T_j^{2m}} = 2m \cdot \vol^{2m}(M \dbs \bS).
        \]
        \item There is a $2m$-fold covering map
        \[
            \cSc\prn{T_1 \cdots T_{2m}} \rightarrow M
        \]
            so that $\cSc\prn{T_1 \cdots T_{2m}}$ is hyperbolic if and only if $M$ is hyperbolic, in which case we have
        \[
            \volc\prn{T_1 \cdots T_{2m}} = 2m \cdot \vol(M).
        \]
    \end{enumerate}
\end{lemma}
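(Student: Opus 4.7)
The plan is to prove Part (1) by direct substitution into the definition of $\cSc$, and Part (2) by constructing a $2m$-fold covering map $\cSc(T_1 \cdots T_{2m}) \to M$ via a local analysis near $E$.

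For Part (1), I would take $\tilde w = T_j^{2m}$ and substitute into the formulas defining $\cSc$. Since every letter is the unreflected $T_j$, the defining data simplify to $R^1(k,\tilde w) = 1$, $R^2(k,\tilde w) = 2$, and $\off(k,\tilde w) = 0$ for every $k$. The identifications defining $\cSc(T_j^{2m})$ then take the form $(V_i^1, 2k) \sim (V_i^2, 2k+1)$ and $(V_i^2, 2k-1) \sim (V_i^1, 2k)$ for each $i$; summing over $i$, these are precisely (up to the standard reflection convention, swapping the role of $V^1$ and $V^2$ in one sheet parity) the identifications used to build $D^{2m}(M \dbs \bS, (V^1,V^2))$ from Equation~\eqref{2n replication equation}. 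The diffeomorphism $\cSc(T_j^{2m}) \simeq D^{2m}(M \dbs \bS)$ follows by inspection, and the hyperbolicity equivalence and volume formula $\volc(T_j^{2m}) = 2m \cdot \vol^{2m}(M \dbs \bS)$ are immediate consequences of the definition of $\vol^{2m}$.

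For Part (2), the plan is to realize $\cSc(T_1 \cdots T_{2m})$ as the $2m$-fold unbranched cover of $M$ associated to a natural homomorphism $\phi\colon \pi_1(M) \to \ZZ/(2m)$, and then construct the covering map $p$ directly from this identification. To define $\phi$, I would use the orientations of the normal bundles of the $\Sigma_i$ together with the two-coloration $\rho$ to assign a signed count to each transverse crossing of a loop with $\bigcup_i \Sigma_i$; the compatibility condition in Definition~\ref{Starburst definition}(\ref{Compatible direction condition}) ensures that a small meridian around any component of $E$ crosses all $2m$ wedges in a consistent direction and hence contributes $2m \equiv 0 \pmod{2m}$, so $\phi$ descends from $\pi_1(M \setminus E)$ to $\pi_1(M)$. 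The corresponding cover is built by taking $2m$ copies of $M \dbs \bS$ and gluing boundary pieces according to the monodromy: advancing one sheet corresponds to advancing by one wedge around each point of $E$, i.e. shifting the cyclic surface index $i \mapsto i+1$, which is precisely the shift encoded by the identifications $(V_i^1,k)\sim (V_{i+1}^2,k+1)$ and $(V_i^2,k-1)\sim(V_{i+1}^1,k)$ in $\cSc(T_1\cdots T_{2m})$. This matching establishes the diffeomorphism $\cSc(T_1\cdots T_{2m}) \simeq \widetilde M$ and yields the covering map $p$.

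Once $p$ is established, the hyperbolicity equivalence and the volume formula $\volc(T_1 \cdots T_{2m}) = 2m \cdot \vol(M)$ follow from standard properties of finite covers of finite-volume hyperbolic $3$-manifolds: such a cover is hyperbolic iff the base is, with volume scaling by the degree. The main obstacle will be carefully verifying the local matching near $d(E) \subset \partial(M \dbs \bS)$: I must show that the cyclic shift of the surface index in the $\cSc$-identifications agrees with the monodromy action on sheets of $\widetilde M$ at every component of $E$, including in the non-orientable setting (handled by passage to the double cover via Proposition~\ref{Orientable double cover proposition}) and across higher-genus boundary components of $M$ (handled by boundary compatibility and doubling). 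This bookkeeping requires consistent tracking of the orientations of normal bundles, the labels $\rho$, and the clockwise/counterclockwise distinction from Construction~\ref{Piece construction}.
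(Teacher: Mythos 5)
Your Part (1) is exactly the paper's argument: substitute $\tilde w = T_j^{2m}$ (represented in $\widetilde W$ as $(T_jT_j^R)^m$) into the definition of $\cSc$, observe $\off \equiv 0$ and that the $R^1,R^2$ data reproduce the identifications of Equation \eqref{2n replication equation}, and read off the volume identity from the definition of $\vol^{2m}$. For Part (2) you take a genuinely different route. The paper constructs the map $\cSc(T_1\cdots T_{2m}) \to M$ directly from the explicit description of the equivalence classes (interior points; pairs of points on $V_i^2$ and $V_{i+1}^1$ over $\Sigma_{i+1}\setminus E$; classes of $2m$ points over $E$) and then verifies the covering property by exhibiting trivializing neighborhoods: ordinary balls in the interior, unions of two ``half-balls'' over $\Sigma_i\setminus E$, and unions of $2m$ ``slices'' over $E$, concluding that all fibers have size $2m$. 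You instead identify $\cSc(T_1\cdots T_{2m})$ as the cover associated to the monodromy $\phi:\pi_1(M)\to\ZZ/2m$ given by the $\rho$-weighted signed intersection count with $\bigcup_i\Sigma_i$, using condition \eqref{Compatible direction condition} to show meridians of $E$ map to $2m\equiv 0$ so that $\phi$ descends from $\pi_1(M\setminus E)$. This is correct and arguably more conceptual — it explains \emph{why} the cyclic cover exists and makes the degree-$2m$ statement automatic from covering space theory (including the degenerate separating case, where $\phi$ is trivial and the cover is $\coprod_{i=1}^{2m}M$). The trade-off is that the identification of your abstract cover $\widetilde M$ with the concrete quotient $\cSc(T_1\cdots T_{2m})$ still forces you through the same local analysis near $d(E)$ that the paper performs when checking the $2m$-slice trivialization, so the bookkeeping you flag as the ``main obstacle'' is essentially the paper's proof in disguise. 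The closing deductions (hyperbolic iff the base is, volume multiplied by the degree) are the same standard facts in both treatments.
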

\begin{proof}
    Part (1) follows by noting that
    \[
    \cSc(T_j^{2m}) = \frac{\coprod_{k=1}^{2m} (M \dbs \bS)}{\prn{V_i^{1},2k} \sim \prn{V_i^{2},2k + 1} \hspace{10pt} \text{and} \hspace{10pt} \prn{V_i^{2},2k -1} \sim \prn{V_i^{1},2k}} = D^{2m}(M \dbs \bS,(V^1,V^2))
    \]
    
    For part (2), note that
    \[
    \cSc(T_1 \cdots T_{2m}) = \frac{\coprod_{k=1}^{2m} (M \dbs \bS)}{\prn{V_i^{2},k} \sim \prn{V^{i + 1}_{1},k+1}}
    \]
    has points either given by a point in the interior of $M \dbs \bS$, by an equivalence class of two points $(p,k)$ and $(p',k')$ where $p$ and $p'$ are points of $V_i^2$ and $V_{i+1}^1$ who correspond with the same point of $\Sigma_{i+1}$, or by an equivalence class of $2m$ points who correspond with the same point of $E$.
    This correspondence outlines a map
    \[
        \cSc(T_1 \cdots T_{2m}) \rightarrow M.
    \]
    
    We will exhibit this map as a covering map, with local trivialization at points in the interior of $M \dbs \bS$ by an $M$-neighborhood which doesn't intersect any element of $\bS$.
    For points on some $\Sigma_i \setminus E$, we may choose the trivializing neighborhood to be the union of two ``half-balls'' in two different copies of $M \dbs \bS$ in $\cSc(T_1 \cdots T_{2m})$.
    For points on $E$, we may choose the trivializing neighborhood to be the union of $2m$ ``slices'' in the $2m$ copies of $M \dbs \bS$ in $\cSc(T_1 \cdots T_{2m})$.
    
    Note that the fiber of each point $p \in M \dbs \bS$ is given by $\cbr{(p,k) \mid 1 \leq k \leq 2m}$.
    Hence the fibers of this covering map have size $2m$, as desired.
\end{proof}

As outlined in our strategy, we now prove the following proposition, implementing the relations used in the proof of Theorem \ref{Main theorem separating}.
\begin{lemma}\label{Volume relation lemma}
    Suppose that $w = a_1a_2 \in \Wknot$ where $\operatorname{len}(a_1) = m$, and define $w_j := a_ja_j^R$.
    If $\cSc(w_1)$ an $\cSc(w_2)$ are hyperbolic, then $\cSc(w)$ is hyperbolic, with volume satisfying
    \[
        2\volc(w) \geq \volc\prn{w_1 + w_2}.
    \]
\end{lemma}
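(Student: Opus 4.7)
The plan is to mirror the argument used to prove Lemma \ref{Volume relation lemma separating}, but carried out directly on the piece structure of $M \dbs \bS$ rather than on the individual $M_i$. The essential new ingredient will be, for each $j \in \{1,2\}$, a natural self-diffeomorphism $\sigma_j$ of $\cSc(w_j) = \cSc(a_j a_j^R)$ realizing the palindromic symmetry: labeling the $2m$ copies of $M \dbs \bS$ in the defining gluing of $\cSc(w_j)$ by cyclic position $k \in \{1,\dots,2m\}$, the map $\sigma_j$ should send the copy at position $k$ to the copy at position $2m+1-k$ via the identity on $M \dbs \bS$. Because the $k$-th and $(2m+1-k)$-th letters of $a_j a_j^R$ agree, this permutation of copies is compatible with the gluing identifications defining $\cSc(w_j)$ and so descends to a well-defined diffeomorphism.

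Assuming $\cSc(w_j)$ is hyperbolic of finite volume, Mostow--Prasad rigidity implies that $\sigma_j$ is isotopic to an isometric involution, and by Theorem \ref{fixed surface lemma} the fixed set of that isometry is a disjoint union of embedded totally geodesic surfaces. By construction, this fixed set is isotopic to the topological fixed locus $F_j$ of $\sigma_j$, which consists of two subsurfaces: the gluing interface between positions $m$ and $m+1$, and the cyclic interface between positions $2m$ and $1$. We may therefore take $F_j \subset \cSc(w_j)$ to be totally geodesic. Cutting $\cSc(w_j)$ along $F_j$ produces two diffeomorphic ``chain'' pieces $X_j$, each formed by linearly gluing $m$ copies of $M \dbs \bS$ in the pattern of $a_j$ and each carrying two distinguished boundary pieces corresponding to the cut.

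Now gluing one copy of $X_1$ to one copy of $X_2$ along their paired boundaries exactly reproduces the cyclic pattern prescribed by $w = a_1 a_2$, yielding $\cSc(w)$; repeating with the second copies produces a second disjoint $\cSc(w)$. Hence, cutting $\cSc(w_1) \sqcup \cSc(w_2)$ along the totally geodesic surface $F_1 \sqcup F_2$ and regluing along the same surface in the new pattern yields $\cSc(w) \sqcup \cSc(w)$. Theorem \ref{Cutting and pasting theorem} then guarantees that the latter manifold is hyperbolic of finite volume, hence so is each of its two components, and that
\[
    2\volc(w) = \vol\prn{\cSc(w) \sqcup \cSc(w)} \geq \vol\prn{\cSc(w_1) \sqcup \cSc(w_2)} = \volc(w_1) + \volc(w_2) = \volc(w_1 + w_2).
\]

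The main obstacle is the careful verification that $\sigma_j$ is truly a diffeomorphism of $\cSc(w_j)$, which reduces to checking that the palindromic swap $k \mapsto 2m+1-k$ respects the gluing data encoded by $R^1$, $R^2$, and the offset $\off$. The palindromic structure exchanges the letter at position $k$ with the letter at position $2m+1-k$, which interchanges the roles of $R^1$ and $R^2$ on the two sides of the involution and negates $\off$; one must verify that these transformations conspire to send the gluing identification between positions $k$ and $k+1$ to the gluing identification between positions $2m-k$ and $2m+1-k$. A clean bookkeeping argument, separating the ``forward'' half (positions $1,\dots,m$) from the ``reverse'' half (positions $m+1,\dots,2m$) of the palindrome and giving separate attention to the two axis interfaces where $F_j$ sits, should complete the verification. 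With $\sigma_j$ in hand, the remaining cut-and-paste argument is essentially formal and proceeds exactly as in the separating case.
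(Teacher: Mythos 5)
Your proof is correct and follows essentially the same route as the paper's: the palindromic involution of $\cSc(w_j)$, the identification of its fixed locus as the two interfaces at the axis of the palindrome, total geodesity via Theorem \ref{fixed surface lemma}, and an application of Theorem \ref{Cutting and pasting theorem} to the cut-and-reglued manifold. The only divergence is in the last step: the paper glues $\cSc(w_1) \dbs F_1$ to $\cSc(w_2) \dbs F_2$ to form a single manifold $X$ and then exhibits $X$ as a double cover of $\cSc(w)$, thereby never needing to decide whether the cut disconnects; you instead assert that $\cSc(w_j) \dbs F_j$ falls into two chain pieces, so that the regluing yields $\cSc(w) \sqcup \cSc(w)$ outright. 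Your stronger claim does hold here: the fixed locus consists of the two \emph{full} interfaces between copies $m$, $m+1$ and copies $2m$, $1$ (at those positions the letters of $a_ja_j^R$ repeat, so the gluing map is the identity and every point of the interface is fixed, while at all other interfaces the involution carries each identification to one at a different interface), and since the only identifications in $\cSc(w_j)$ occur between cyclically consecutive copies of $M \dbs \bS$, deleting those two interfaces severs every identification between copies $1,\dots,m$ and copies $m+1,\dots,2m$. Thus the paper's double cover is the trivial one and the two formulations agree; the volume inequality then follows identically from Theorem \ref{Cutting and pasting theorem}. The remaining points you flag for verification (that the swap $k \mapsto 2m+1-k$ respects the gluing data, and that the topological fixed locus can be taken totally geodesic) are treated at the same level of detail in the paper, so there is no gap relative to it.
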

\begin{proof}
    Assume without loss of generality that $\cSc(w_1)$ was constructed fixing the first letter of $a_1$ as the first letter of the word.
    Then, there is a diffeomorphism of $\coprod_{k=1}^{2m} M \dbs \bS$ sending $\prn{M \dbs \bS,k} \rightarrow \prn{M \dbs \bS, 2m - k}$.
    For a general word $v$ this descends to a diffeormophism $\cSc(v) \rightarrow \cSc(v^R)$;
    in the case of a palindromic word, this descends to a diffeomorphism of $\cSc(v)$.
    
    In particular, the fixed point set of this diffeomorphism is the surface
    \[
        F_j := \bigcup_k \prn{V_1^{R_1(1,w)} \cup V_m^{R_2(m,w)},k} \subset \cSc(w_j).
    \]
    By Theorem \ref{fixed surface lemma}, $F_j$ is totally geodesic;
    since $a_1a_2 \in W$, the surfaces $F_1$ and $F_2$ are diffeomorphic.
    Hence we may form the 3-manifold $X$ by gluing together the $\cSc(a_ja_j^R) \dbs F_j$ along a diffeomorphism of the points corresponding to $F_j$ induced by the diffeomorphism $F_1 \simeq F_2$.
    Then, by Theorem \ref{Cutting and pasting theorem}, $X$ is hyperbolic, with volume satisfying
    \[
        \vol(X) \geq \volc(w_1 + w_2).
    \]
    
    To prove the lemma, it suffices to supply a double cover $p:X \rightarrow \cSc(w)$.
    This may be defined by noting that there exists a map $\cSc(w_1) \dbs F_1 \sqcup \cSc(w_2) \dbs F_2 \rightarrow \cSc(w)$ identifying the points corresponding with word $a_j$ in the domain with the same points corresponding with $a_j$ in the codomain;
    two points in the domain are identified by the diffeomorphism of $F_1$ and $F_2$ only if they are mapped to the same point of $\cSc(w)$, so this descends to a map $p:X \rightarrow \cSc(w)$.
    
    We verify that $p$ is a covering map.
    For a point contained within the points corresponding with subword $a_j$ and away from the image of $F_j$ in $\cSc(w)$, we may choose a trivializing neighborhood which is contained in the points corresponding with $a_j$.
    For a point on the image of $F_j$, we may choose the neighborhood composed of the union of ``half-balls'' in the points corresponding with each $a_1$ and $a_2$. 
    Hence this is a covering map.
    
    It is clear that this map has fibers of size 2, and hence $\cSc(w)$ is hyperbolic with volume satisfying
    \[
        2 \cdot \volc(w) = \vol(X) \geq \volc(w_1 + w_2),
    \]
    as desired.
\end{proof}

\begin{proof}[Proof of Theorem \ref{Main theorem}]
    Given Lemmas \ref{Pieces and manifold lemma} and \ref{Volume relation lemma}, the proof of Proposition \ref{Reducability proposition} yields hyperbolicity of $M$ and the inequality
    \begin{align*}
        2m \cdot \vol(M)
        &= \volc(T_1 \cdots T_{2m})\\ 
        &\geq \frac{1}{2m} \sum_{i=1}^m \volc(T_i)\\
        &= 2m \cdot \vol^{2m}(M \dbs \bS).
    \end{align*}
    
    We now characterize when this equality is attained.
    Let $a_1 := T_1 \cdots T_{m}$ and let $a_2 := T_{m+1} \cdots T_{2m}$.
    Then, in the notation of Lemma \ref{Volume relation equation}, the image of the surfaces $F^1_j \subset \cSc(a_ja_j^R)$ in $\cSc(T_1 \cdots T_{2m})$ is given by the preimage of $\Sigma_1$ under the covering map $\cSc(T_1 \cdots T_{2m}) \rightarrow M$.
    Call this surface $F^1$, and note that $F^1$ is totally geodesic if and only if $\Sigma_1$ is totally geodesic;
    this generalizes immediately to $\Sigma_j$, and let the associated surface be $F^j$.
    
    Suppose first that some $\Sigma_j$ is not totally geodesic.
    Without loss of generality, we may fix $j = 1$.
    Then, note that $F^1 \subset \cSc(T_1 \cdots T_{2m})$ is not totally geodesic, so its preimage in the total space of the double cover $X \rightarrow \cSc(T_1 \cdots T_{2m})$ is not totally geodesic either.
    Then, since $X$ is formed by cutting and pasting $\cSc(a_ja_j^R)$ along diffeomorphisms of totally geodesic surfaces, Theorem \ref{Cutting and pasting theorem} implies that $\vol(X) > \volc(a_1a_1^R) + \volc(a_2a_2^R)$.
    This in turn implies
    \begin{align*}
        4m \cdot \vol(M)
        &= 2 \cdot \volc(T_1 \cdots T_{2m})\\
        &= \vol(X)\\
        &> \volc(a_1a_1^R) + \volc(a_2a_2^R)\\
        &\geq 4m \cdot \vol^{\cdot} (M \dbs \bS),
    \end{align*}
    as desired.

    Suppose conversely that each $\Sigma_i$ is totally geodesic;
    then, each of the surfaces $F^j \subset \cSc(T_1 \cdots T_{2m})$ are totally geodesic, and hence each of the preimages of these surfaces in $X$ are totally geodesic.
    We may repeatedly cut along totally geodesic surfaces to yield $\coprod_{k=1}^{2m} (M \dbs \bS)$, and glue along isometric totally geodesic surfaces to yield $\cSc(T_i) = D^{2m}(M \dbs \bS, (V^1,V^2))$, directly yielding the equality
    \[
        2m \cdot \vol(M) 
        = \volc(T_1 \cdots T_{2m})
        = \vol^{2m}(M \dbs \bS). 
    \]
\end{proof}

Having proven our theorem in the case of a starburst in a manifold, we may extend our theorem to starbursts in $\ell$-pieces identically to our extension from Theorem \ref{Main theorem separating} to Theorem \ref{Main theorem separating inductive}, with references to Theorem \ref{Main theorem separating} replaced by Theorem \ref{Main theorem}, and omitting the word ``separating.''

\subsection{Composition of pieces}
In the following short section, we state and prove a corollary to Theorem \ref{Main theorem} about \emph{combining hyperbolic pieces}.
We will use this to prove hyperbolicity of composition of tangles in Sections \ref{Knot theory section} and \ref{hyperbolicitytangles}.
\begin{corollary}\label{General composition corollary}
    Let $\cbr{(P_i, (V_i^1,V_i^2))}$ be a set of $n$ pieces, each $2mn$-hyperbolic, equipped with diffeomorphisms $\phi_i:V_i^2 \xrightarrow\sim V_{i+1}^1$ for each $1 \leq i \leq {n-1}$.
    Then, the piece
    \[
        (P,(V_1^1, V_n^2)) \hspace{50pt} \text{where} \hspace{50pt} P = \frac{\coprod_i P_i}{V_i^2 \sim_{\phi_i} V_{i+1}^2}
    \]
    is $2m$-hyperbolic, with volume satisfying
    \[
        \vol^{2m}(P) \geq \frac{1}{n} \sum_{i=1}^n \vol^{2mn}(P_i).
    \]
\end{corollary}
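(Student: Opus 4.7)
The plan is to apply Theorem~\ref{Cutting and pasting theorem} by cutting the disjoint union $\coprod_{i=1}^n D^{2mn}(P_i)$, which is hyperbolic of finite volume by the $2mn$-hyperbolicity hypothesis, along totally geodesic surfaces and then re-gluing to produce $n$ disjoint copies of $D^{2m}(P)$.

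First I would unpack the piece structure of $D^{2m}(P)$. Each of the $2m$ copies of $P$ appearing in $D^{2m}(P)$ is itself the chain $P_1,\ldots,P_n$ glued along the $\phi_i$, and successive copies of $P$ are attached along $V_1^1$ or $V_n^2$ with alternating orientations. Threading the $P_j$'s together as they appear around the cycle exhibits $D^{2m}(P)$ as a cyclic concatenation of $2mn$ subpieces in which each $P_j$ appears exactly $2m$ times. Consequently, the inventory of subpieces required to assemble $n$ disjoint copies of $D^{2m}(P)$ is precisely $2mn$ copies of each $P_j$, which matches the inventory obtained by cutting each $D^{2mn}(P_i)$ along its $2mn$ replicant-gluing surfaces.

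Next I would confirm that these gluing surfaces are totally geodesic: by Mostow-Prasad rigidity, the reflection of $D^{2mn}(P_i)$ across any single such surface, which swaps the two halves of the cyclic construction, is an isometry of the unique hyperbolic structure, and Theorem~\ref{fixed surface lemma} then places the surface inside its totally geodesic fixed locus. Cutting $\coprod_i D^{2mn}(P_i)$ along the disjoint union $\Sigma$ of all such surfaces isolates the full inventory of $P_j$-copies.

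The main step is to write down a diffeomorphism $\psi \colon \Sigma \to \Sigma$ whose associated re-gluing assembles these pieces into $n$ copies of $D^{2m}(P)$. The required identifications fall into two types: gluings $V_i^j \sim V_i^j$ between sides of the same type, implementing the replicant-gluings of $P$ itself, and gluings $V_i^2 \sim V_{i+1}^1$ via $\phi_i$, implementing the internal composition of $P$. Applying Theorem~\ref{Cutting and pasting theorem} then yields hyperbolicity of the $n$-fold disjoint union of $D^{2m}(P)$, and hence of $D^{2m}(P)$ itself, together with
\[
    n \cdot \vol\prn{D^{2m}(P)} \geq \sum_{i=1}^n \vol\prn{D^{2mn}(P_i)} = 2mn \sum_{i=1}^n \vol^{2mn}(P_i),
\]
from which the stated lower bound follows after dividing by $2mn$. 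The main obstacle I anticipate is the combinatorial bookkeeping required to define $\psi$ explicitly: one must arrange the cyclic threading of $P_j$-subpieces inside each $D^{2m}(P)$ to line up with the alternating orientations of successive copies of $P$ across the replicant-gluings, so that each identification does pair diffeomorphic boundary surfaces through either the identity or the appropriate $\phi_i$.
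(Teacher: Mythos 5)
Your strategy has a genuine gap at its central step. You propose to cut $\coprod_i D^{2mn}(P_i)$ along ``the disjoint union $\Sigma$ of all such surfaces'' and to reglue via a single diffeomorphism $\psi\colon\Sigma\to\Sigma$, i.e.\ to invoke Theorem \ref{Cutting and pasting theorem} once. But that theorem requires $\Sigma$ to be a properly embedded (two-sided) surface, and inside a single $D^{2mn}(P_i)$ the $mn$ replicant-gluing sheets are \emph{not} disjoint whenever $V_i^1\cap V_i^2\neq\emptyset$: every one of them contains the image of the axis $E=V_i^1\cap V_i^2$, and together they form an $mn$-sheeted starburst along it. This is the typical case for the pieces this corollary is applied to (e.g.\ saucer tangles in Corollary \ref{Saucer composition corollary}, where $E$ is the central circle), so the one-shot cut-and-reglue is not a legal application of Theorem \ref{Cutting and pasting theorem}. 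Nor can you simply iterate over the sheets one at a time: each reflection-fixed sheet is indeed totally geodesic in $D^{2mn}(P_i)$ by Theorem \ref{fixed surface lemma}, but after the first cut-and-reglue the images of the remaining sheets in the new manifold are no longer known to be totally geodesic, and this is exactly the difficulty that the palindromic cyclic-word induction of Section 2 (Lemma \ref{Volume relation lemma separating}, Lemma \ref{Pizza inductive lemma}, Proposition \ref{Reducability proposition}) exists to overcome. Your argument is complete only in the special case $V_i^1\cap V_i^2=\emptyset$, where the gluing surfaces really are disjoint.

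The paper's proof sidesteps all of this by running the logic in the opposite direction: it observes that $D^{2m}(P)$ itself contains an $mn$-sheeted starburst whose complementary pieces are copies of the $P_i$, each $2mn$-hyperbolic by hypothesis, and then quotes Theorem \ref{Main theorem}, whose proof already packages the delicate cutting-and-regluing bookkeeping. If you want to retain your direction of argument, the single application of Theorem \ref{Cutting and pasting theorem} must be replaced by the full machinery of Theorem \ref{Main theorem separating}/\ref{Main theorem}. As a side remark, your volume accounting, carried through correctly, gives $\vol^{2m}(P)\geq\sum_{i=1}^n\vol^{2mn}(P_i)$ (dividing $n\cdot\vol(D^{2m}(P))\geq 2mn\sum_i\vol^{2mn}(P_i)$ by $2mn$ yields this, not the bound with the factor $1/n$); this stronger inequality is also what Theorem \ref{Main theorem} delivers, and it implies the stated bound.
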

\begin{proof}
    Note that $D^{2m}(P,(V_1^1,V_n^2))$ contains, by tracking the images of $V_i^j$, an $mn$-sheeted starburst $\bS$ which separates the manifold into $m$-namy copies of $P_i$ for each $i$.
    Hence Theorem \ref{Main theorem} applied to the replicant $D^{2m}(P,(V_1^1,V_n^2))$ yields that the manifold is hyperbolic, with volume bound as described in the corollary.
\end{proof}

\section{Applications to knot theory} \label{Knot theory section}
Given a manifold $M$ and a surface $\Sigma \subset \partial M$, we define a \emph{tangle in $(M,\Sigma)$} to be an embedded compact 1-manifold in $M$ with boundary contained in $\Sigma$.
Given an $\ell$-piece $(P,(V^j))$, we define a \emph{tangle in $(P,(V^j))$} to be a tangle in $(P,\bigcup_n V_{2n-1} \cap V_{2n})$.

A link may be viewed as a tangle with no arcs. 
Given a tangle in an $\ell$-piece $(M,\prn{V_i})$ and a starburst $(M,\bS,\rho)$ in the $\ell$-piece, there is an associated tangle in the $(\ell+1)$-piece on $M \dbs \bS$.
Further, if we have an $\ell$-piece that is a  disjoint union  $(M \sqcup M',\cbr{V_i \sqcup V'_i})$ containing a tangle $\cT$, then $\cT$ decomposes into the disjoint union of tangles in $(M,\cbr{V_i})$ and $(M',\cbr{V'_i})$. 

We explore various applications of Theorem \ref{Main theorem} to the theory of links that are constructed out of tangles.
In Subsection \ref{thickenedtorussection} we explore hyperbolic links within the thickened torus, which can be thought of as the complement of the Hopf link in $S^3$. We decompose the links into thickened-cylinder tangles, and further decompose those into cubical tangles.
In Subsection \ref{solidtorussection}, we consider links in an open solid torus, which can be thought of as the complement of a trivial component in $S^3$. There, we decompose links into wedge tangles.
Following this, in Subsection \ref{Bracelet subsection} we explore hyperbolic links in the 3-sphere which decompose into a cycle of saucer tangles.

Later, in Theorem \ref{Saucer hyperbolicity theorem}, we show that $2n$-hyperbolicity of saucer tangles implies $2m$-hyperbolicity for any $m \geq n$, and hence a cycle of $n \geq 2m$ saucer tangles which are each $2m$-hyperbolic glue together to yield a hyperbolic link.
We decompose saucer tangles further into tetrahedral tangles, and prove the corresponding results there.

In Subsection \ref{spheretimescirclesubsection}, we consider torus lattice links in $S^2 \times S^1$ and decompose them into thickened bigon tangles, proving similar results in this context as well.



\subsection{Links within the thickened torus}\label{thickenedtorussection}
Let $T := S^1 \times S^1$ be the torus.
We define two different classes of tangles arising from the study of links in $T \times I$.
\begin{definition}\label{Thickened torus tangle definition}
    Let $C := S^1 \times I \times I$ be the thickened cylinder and let $R_0 := S^1 \times I \times \cbr{0}$ and $R_1 := S^1 \times I \times \cbr{1}$ be the annuli at heights $\{ 0 \}$ and $\{ 1 \}$.
    A \emph{thickened-cylinder tangle} $\cT_i$ is a tangle in $(C,R_0 \cup R_1)$, with endpoints in the two annuli at the ends of the thickened cylinder.
    By reflecting across $R_0$ and $R_1$,  a thickened-cylinder  tangle $\cT_i$ determines a link $\Dthick{2}{\cT_i} \subset S^1 \times S^1 \times I$ in a thickened torus.
    We say that $\cT_i$ is \emph{2-hyperbolic} if $\Dthick{2}{\cT_i}$ is a hyperbolic link, in which case we define
    \[
        \volthick{2}{\cT_i} := \frac{\vol\prn{\Dthick{2}{\cT_i}}}{2}.
    \]
    
    Let $Q = I^3$ be the cube.
    Define the \emph{top and bottom faces} $R_{0i} := I \times \cbr{i} \times I$ and the \emph{left and right faces} $R_{1i} := I \times I \times \cbr{i}$.
    A \emph{cubical tangle} $\cT_{ij}$ is a tangle in $(Q,I \times \partial(I^2) - I \times \partial I \times \partial I)$.
    By reflecting in two opposite faces of the cube to obtain a second copy glued to the first,  a cubical tangle $\cT_{ij}$ determines  a thickened-cylinder  tangle.  Via another  reflection operation, this in turn determines a link $\Dthick{(2,2)}{\cT_{ij}} \subset S^1 \times S^1 \times I$ in a thickened torus.
    We say that $\cT_{ij}$ is \emph{$(2,2)$-hyperbolic} if $\Dthick{(2,2)}{\cT_{ij}}$ is a hyperbolic link, in which case we define
    \[
        \volthick{(2,2)}{\cT_{ij}} := \frac{\vol\prn{\Dthick{(2,2)}{\cT_{ij}}}}{4}.
    \]
\end{definition}

See Figure \ref{cubicaltangle} for cubical tangle $\cT$ that when doubled, yields a thickened-cylinder tangle, which when doubled again generates $D^{(2,2)}(\cT)$.

\begin{figure}[htpb]
    \centering
    \includegraphics[width=.9\textwidth]{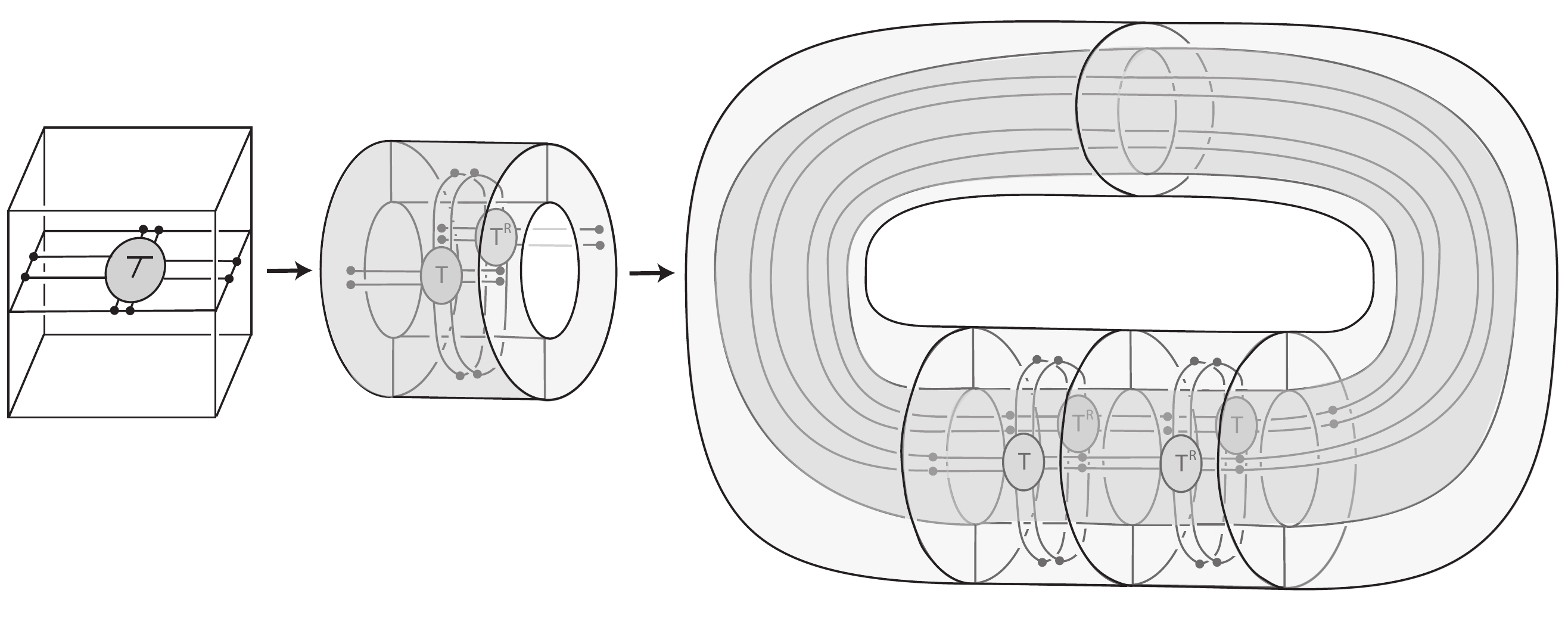}
    \caption{ A cubical tangle generates a thickened-cylinder tangle which in turn generates a link in a thickened torus.
    }
    \label{cubicaltangle}
\end{figure}

The following theorems summarize the situations of Figures \ref{Torus cycle figure} and \ref{thickened torus example}, and is an easy corollary of Theorem \ref{Main theorem}.
\begin{theorem}\label{Torus theorem} 
    Let $L \subset T \times I$ be a link in a thickened torus such that there exists disjoint surfaces separating $L$ into thickened-cylinder tangles $\cT_i$ and each $\cT_i$ is 2-hyperbolic.
    Then $L$ is hyperbolic, and its volume satisfies
    \[
        \vol(L) \geq \sum_i \volthick{2}{\cT_i}.
    \]
    \qed
\end{theorem}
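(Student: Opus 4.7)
The plan is to deduce this directly from Theorem \ref{Cutting and pasting theorem} via a cut-and-paste manipulation; this is essentially the $E = \emptyset$ case of the general strategy behind Theorem \ref{Main theorem separating}, as sketched in the paragraph immediately following that theorem. Write $M = (T \times I) \setminus L$, and let $\cT_1, \ldots, \cT_n$ be the cyclic sequence of thickened-cylinder tangles. Cutting $M$ along the disjoint separating annuli produces pieces $(M_i, (V^1_i, V^2_i))$, where $M_i$ is the complement of $\cT_i$ in a thickened cylinder and $V^1_i, V^2_i$ are its two annular ends; $2$-hyperbolicity of $\cT_i$ is exactly hyperbolicity of $\Dthick{2}{\cT_i} = D^2(M_i, (V^1_i, V^2_i))$.

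First I would form the disjoint union $N = \bigsqcup_{i=1}^n \Dthick{2}{\cT_i}$, which is hyperbolic with total volume $\sum_i \vol(\Dthick{2}{\cT_i}) = 2 \sum_i \volthick{2}{\cT_i}$. By Theorem \ref{fixed surface lemma}, the images of $V^1_i$ and $V^2_i$ in $\Dthick{2}{\cT_i}$ are totally geodesic, being the fixed-point sets of the reflection isometries implicit in the doubling construction. Next I would cut $N$ along these $2n$ totally geodesic annuli into $2n$ copies of the $M_i$ (two per index), then reglue along isometries of the annular boundaries in the cyclic pattern prescribed by $L$ --- attaching $V^2_i$ to $V^1_{i+1}$ with indices mod $n$ --- with the pairs of descendants chosen so as to form two disjoint length-$n$ cycles rather than a single cycle of length $2n$. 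This produces $M \sqcup M$, and iterating Theorem \ref{Cutting and pasting theorem} along the sequence of totally geodesic surgeries yields that $M \sqcup M$ is hyperbolic (whence $M$ is hyperbolic as well) and
\[
  2\vol(M) \;=\; \vol(M \sqcup M) \;\geq\; \vol(N) \;=\; 2 \sum_i \volthick{2}{\cT_i},
\]
from which dividing by $2$ gives the claimed lower bound.

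The only step requiring any real care is the bookkeeping of the regluing: after cutting, one labels each $M_i$'s two descendants as \emph{first} and \emph{second}, and one must verify that consistently gluing first-to-first and second-to-second around the cycle genuinely produces two disjoint copies of $M$ rather than some connected double cover. Since the separating surfaces are disjoint, this pairing is globally consistent with no obstruction, and I expect no genuine difficulty to arise here.
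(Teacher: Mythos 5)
Your proposal is correct and is essentially the paper's own argument: the paper notes that when the decomposing surfaces are disjoint (the $E=\emptyset$ case, explicitly citing thickened-cylinder tangles in the thickened torus as the example), one forms the disjoint union of the replicants, cuts along the totally geodesic doubling surfaces supplied by Theorem \ref{fixed surface lemma}, and reglues via Theorem \ref{Cutting and pasting theorem} to produce copies of $M$, then divides by the multiplicity. Your bookkeeping (two copies of each $M_i$ reassembling into two disjoint length-$n$ cycles, one of which is a mirror image of $M$ and hence of equal volume) matches the paper's counting exactly.
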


\begin{theorem} \label{Square tangles theorem}
    Let $\cT_i$ be  a thickened-cylinder  tangle, and suppose there are disjoint surfaces separating $\cT_i$ into a cycle $\cT_{ij}$ of cubical tangles and each $\cT_{ij}$ is $(2,2)$-hyperbolic.
    Then, $\cT_i$ is $2$-hyperbolic within, with volume satisfying
    \[
        \volthick{2}{\cT_i} \geq \sum_j \volthick{(2,2)}{\cT_{ij}}.
    \]
    If a link $L$ in a thickened torus decomposes into cubical tangles such that each $\cT_{ij}$ is $(2,2)$-hyperbolic, then $L$ is hyperbolic with volume satisfying
    \[
        \vol(L) \geq \sum_{ij} \volthick{(2,2)}{\cT_{ij}}.
    \]
    \qed
\end{theorem}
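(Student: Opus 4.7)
The plan is to bootstrap Theorem \ref{Torus theorem} for thickened-cylinder tangles by applying it to the $2$-replicant $\Dthick{2}{\cT_i}$. The key observation is that, for any cubical tangle $\cT_{ij}$, the inductive definition of the $(2,2)$-replicant yields $\Dthick{(2,2)}{\cT_{ij}} \simeq D^2\prn{D^2\prn{\cT_{ij}}}$ (with Proposition \ref{Replication commuting proposition} ensuring that the choice of which pair of faces to double first is immaterial), so that $(2,2)$-hyperbolicity of $\cT_{ij}$ is equivalent to $2$-hyperbolicity of the thickened-cylinder tangle $D^2\prn{\cT_{ij}}$ obtained by doubling $\cT_{ij}$ across the pair of faces that form part of the top and bottom annuli of $\cT_i$.

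For the first assertion, I would consider $\Dthick{2}{\cT_i}$, which is a link in a thickened torus. The cyclic decomposition of $\cT_i$ into the cubical tangles $\cT_{ij}$ via separating disks lifts, after doubling the top and bottom annuli, to a cyclic decomposition of $\Dthick{2}{\cT_i}$ into the thickened-cylinder tangles $D^2\prn{\cT_{ij}}$, each of which is $2$-hyperbolic by the preceding observation. Theorem \ref{Torus theorem} applied to $\Dthick{2}{\cT_i}$ then yields its hyperbolicity (equivalently, $2$-hyperbolicity of $\cT_i$) together with
\[
    \vol\prn{\Dthick{2}{\cT_i}} \geq \sum_j \volthick{2}{D^2\prn{\cT_{ij}}}.
\]
Substituting the definitional identities $\volthick{2}{\cT_i} = \tfrac{1}{2}\vol\prn{\Dthick{2}{\cT_i}}$ and $\volthick{2}{D^2\prn{\cT_{ij}}} = \tfrac{1}{2}\vol\prn{\Dthick{(2,2)}{\cT_{ij}}} = 2\volthick{(2,2)}{\cT_{ij}}$, and dividing the resulting inequality by $2$, gives the desired bound $\volthick{2}{\cT_i} \geq \sum_j \volthick{(2,2)}{\cT_{ij}}$.

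For the second assertion, a decomposition of a link $L \subset T \times I$ into cubical tangles canonically refines a coarser decomposition into a cycle of thickened-cylinder tangles $\cT_i$ obtained by regrouping consecutive cubes along one of the two cyclic directions of the thickened torus, and each $\cT_i$ inherits its own cyclic decomposition into its constituent cubes $\cT_{ij}$. The first assertion then gives that each $\cT_i$ is $2$-hyperbolic with $\volthick{2}{\cT_i} \geq \sum_j \volthick{(2,2)}{\cT_{ij}}$, and Theorem \ref{Torus theorem} applied to the cycle of $\cT_i$ in $L$ yields hyperbolicity of $L$ together with $\vol(L) \geq \sum_i \volthick{2}{\cT_i}$; chaining these two inequalities produces the stated bound. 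The main technical point is the compatibility of the cubical decomposition with doubling across the top and bottom annuli---that is, that the separating disks between adjacent cubes double to annuli between the corresponding $D^2\prn{\cT_{ij}}$---which follows directly from the fact that these separating disks are disjoint from the annuli along which the doubling occurs.
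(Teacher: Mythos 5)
Your proposal is correct and follows essentially the same route as the paper: the paper deduces Theorem \ref{Square tangles theorem} from the general $\ell$-piece result (Theorem \ref{Main theorem separating inductive}/\ref{Main theorem inductive}), whose proof is exactly the bootstrapping you carry out explicitly for $\ell=2$ --- pass to the replicant $\Dthick{2}{\cT_i}$, observe via Proposition \ref{Replication commuting proposition} that $(2,2)$-hyperbolicity of each $\cT_{ij}$ is $2$-hyperbolicity of $D^2(\cT_{ij})$, apply the base-case Theorem \ref{Torus theorem} to the induced cycle of thickened-cylinder tangles, and divide by the replication factor. Your bookkeeping of the volume normalizations and the observation that the separating disks double to disjoint annuli are both accurate, so no gaps.
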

Such surfaces as in Theorem \ref{Torus theorem} can be found by taking a sequence of disjoint punctured annuli tracing out a $(p,q)$-curve on each boundary torus of $T \times I$.
Since there is a homeomorphism of $T \times I$ taking such a punctured annulus to one bounding meridians, Theorem \ref{Torus theorem} may be pictured as \emph{cutting the thickened torus along meridians} as in Figure \ref{Torus cycle figure}.
Further, Theorem \ref{Square tangles theorem} may be pictured by cutting a thickened cylinder into cubes, as in Figure \ref{thickened torus example}.

Note that diffeomorphisms of pieces lift to diffeomorphisms of their $2n$-replicants.
Hence Mostow-Prasad rigidity implies that we may replace any piece with a diffeomorphic piece while maintaining the same information regarding hyperbolicity and volume.
Using this, we may replace each thickened-cylinder tangle with one whose endpoints are ``standardized'' such that whenever thickened-cylinder tangle $\cT_i$ has the same number of endpoints at its top as $\cT_j$ has at its bottom, there is an evident gluing of the tangles 
\[
    (C,\cT_i \circ \cT_j) := (C,\cT_i) \cup_{R_1 \sim R_0} (C,\cT_j)
\]
which attaches the first copy of $R_1$ to the second copy of $R_0$.

Through an analogous process, we may define two ways of gluing cubical tangles:
\emph{top-to-bottom}, or \emph{left-to-right}, written
\[
    (Q,\cT_{ij} \circ_a \cT_{kl}) := (Q,\cT_{ij}) \cup_{R_{a0} \sim R_{a1}} (Q,\cT_{kl}))
\]
whenever the numbers of endpoints are compatible.

The following establishes a relationship between hyperbolicity and composition, and it follows as a clear application of Theorem \ref{Main theorem inductive}.
\begin{proposition}\label{Tangle composition proposition}
    Suppose $\cT_i,\cT_j$ are $2$-hyperbolic thickened-cylinder tangles, and suppose that that $\cT_i \circ \cT_j$ is defined.
    Then, $\cT_i \circ \cT_j$ is $2$-hyperbolic with volume satisfying
    \[
        \volthick{2}{\cT_i \circ \cT_j} \geq \volthick{2}{\cT_i} + \volthick{2}{\cT_j}
    \]
    
    Suppose instead that $\cT_{ij},\cT_{kl}$ are $(2,2)$-hyperbolic cubical  tangles and suppose that $\cT_{ij} \circ_a \cT_{kl}$ is defined.
    Then, $\cT_{ij} \circ_a \cT_{kl}$ is $(2,2)$-hyperbolic in a thickened torus, with volume satisfying
    \[
        \volthick{(2,2)}{\cT_{ij} \circ_a \cT_{kl}} \geq
        \volthick{(2,2)}{\cT_{ij}}
        + \volthick{(2,2)}{\cT_{kl}}
    \]
    \qed
\end{proposition}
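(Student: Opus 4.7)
The plan is to deduce both parts directly from Theorem~\ref{Torus theorem} (resp.\ Theorem~\ref{Square tangles theorem}) applied to the $2$-replicant (resp.\ $(2,2)$-replicant) of the composition tangle. The key observation is that the annulus (resp.\ disk) used to perform the composition is preserved, together with its reflected copies, in the replicant, so that it appears as part of the collection of cutting surfaces.

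For the first part, I would first note that $\Dthick{2}{\cT_i\circ\cT_j}$ is obtained from $\cT_i\circ\cT_j\subset S^1\times I\times I$ by reflecting across the two annular end-faces, producing a link in a thickened torus. The composition annulus in $\cT_i\circ\cT_j$ together with its reflected image, and the two end-annuli of $\cT_i\circ\cT_j$ (which are now internal), furnish four pairwise-disjoint properly embedded annuli cutting $\Dthick{2}{\cT_i\circ\cT_j}$ into a cycle of four thickened-cylinder tangles, appearing in the cyclic order $\cT_i,\cT_j,\cT_j^R,\cT_i^R$, where $\cT^R$ denotes the reflection. Since $D^2(\cT)$ and $D^2(\cT^R)$ are diffeomorphic, each piece is $2$-hyperbolic of the same $2$-volume as its unreflected counterpart. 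Theorem~\ref{Torus theorem} then gives hyperbolicity of $\Dthick{2}{\cT_i\circ\cT_j}$ (so $\cT_i\circ\cT_j$ is $2$-hyperbolic) with
\[
    \vol\!\left(\Dthick{2}{\cT_i\circ\cT_j}\right)\ \geq\ 2\volthick{2}{\cT_i}+2\volthick{2}{\cT_j},
\]
and dividing by $2$ produces the claimed bound.

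The argument for cubical tangles will be identical in structure. The $(2,2)$-replicant $\Dthick{(2,2)}{\cT_{ij}\circ_a\cT_{kl}}$ is obtained by reflecting the composite cube across two perpendicular pairs of opposite faces. Regardless of which direction $a$ is chosen for the composition, the two reflections will produce four images of the composition disk and two images of the other pair of originally-glued faces, giving six pairwise-disjoint annular cutting surfaces in the resulting thickened torus link. These cut the link into $8$ cubical tangles, namely four copies of $\cT_{ij}$ and four of $\cT_{kl}$ (some reflected, which preserves $(2,2)$-hyperbolicity and $(2,2)$-volume). Theorem~\ref{Square tangles theorem} then yields
\[
    \vol\!\left(\Dthick{(2,2)}{\cT_{ij}\circ_a\cT_{kl}}\right)\ \geq\ 4\volthick{(2,2)}{\cT_{ij}}+4\volthick{(2,2)}{\cT_{kl}},
\]
and dividing by $4$ gives the desired inequality.

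The main obstacle will be the combinatorial verification that the various reflected copies of the composition surface and of the remaining original boundary surfaces are genuinely pairwise disjoint inside the replicant, and that the resulting pieces are all identifiable as copies (possibly reflected) of the original tangles in the expected cyclic/lattice arrangement. In both cases this is a direct check using the explicit reflection structure defining the replicants, and no further geometric input beyond Theorems~\ref{Torus theorem} and~\ref{Square tangles theorem} is required.
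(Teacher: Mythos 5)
Your argument is correct and, in substance, runs on the same machinery as the paper: the paper disposes of this proposition by citing Theorem~\ref{Main theorem inductive} (equivalently Corollary~\ref{General composition corollary}), whereas you enter one level downstream, decomposing the replicant of the composite and invoking the already-derived Theorems~\ref{Torus theorem} and~\ref{Square tangles theorem}. For the first part your bookkeeping is exactly right: $\Dthick{2}{\cT_i\circ\cT_j}$ is a cycle of four thickened-cylinder tangles $\cT_i,\cT_j,\cT_j^R,\cT_i^R$ cut off by four pairwise disjoint annuli, each piece is $2$-hyperbolic with the same $2$-volume as its unreflected counterpart, and Theorem~\ref{Torus theorem} gives the bound after dividing by $2$.

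One correction to the second part: the six cutting surfaces in $\Dthick{(2,2)}{\cT_{ij}\circ_a\cT_{kl}}$ are \emph{not} pairwise disjoint. The four annuli coming from the composition disk and the outer $a$-faces are transverse to the two annuli coming from the other pair of faces, and they meet in arcs (this is precisely the starburst locus $E$ of the lattice decomposition). So the ``combinatorial verification of pairwise disjointness'' you flag as the main remaining obstacle is a check of something false. Fortunately this does not damage the argument: the second clause of Theorem~\ref{Square tangles theorem} is stated for an arbitrary decomposition of a thickened-torus link into $(2,2)$-hyperbolic cubical tangles and does not require the cutting surfaces to be disjoint (it is proved via the starburst formalism, where the intersections are expected). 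Your identification of the $4\times 2$ lattice of eight cubes, four copies each of $\cT_{ij}$ and $\cT_{kl}$ up to reflections across the doubling faces, is correct, and reflections of this kind preserve $(2,2)$-hyperbolicity and $(2,2)$-volume since they induce diffeomorphisms of the $(2,2)$-replicants; so the stated inequality follows as you claim.
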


In a subsequent paper \cite{selectalternating}, we will provide many examples of 2-hyperbolic thickened-cylinder tangles. 

\subsection{Links within a solid torus} \label{solidtorussection}
Let $V := D \times S^1$ be the solid torus, where $D$ is the 2-dimensional disk.
Unlike the thickened torus case, cutting links $L \subset V$ via starbursts that intersect the boundary $\partial V = T$ in meridians and longitudes yields tangles in different ambient spaces.
In particular, cutting along a starburst with longitude boundaries yields a cycle of tangles living in \emph{cyclical wedges}, and cutting along a surface bounded by disjoint meridians yields a cycle of tangles living in solid cylinders.
Each of these may be further cut into square tangles that live in wedges.

We give analogous definitions to the thickened torus case in order to find lower bounds on volumes of links $L \subset V$.

\begin{definition}\label{Solid torus tangle definition}
    A \emph{solid-cylinder tangle} $\cT_i$ is a tangle in $(D \times I, D \times \partial I)$.
    In a similar manner to thickened-cylinder tangles, doubling a solid-cylinder tangle $\cT_i$ over $D \times \partial I$ determines a link in a solid torus, written
    \[
        (V,\Dsolid{2}{\cT_i}) := (D \times I,\cT_i) \cup_{D \times \partial I} (D \times \cT_i).
    \]
    We say that $\cT_i$ is \emph{$2$-hyperbolic} if $\Dsolid{2}{\cT_i}$ is hyperbolic, in which case we define
    \[
        \volsolid{2}{\cT_i} := \frac{\vol\prn{\Dsolid{2}{\cT_i}}}{2}.
    \]
    
    Let $P$ be the solid triangle, and let $F \subset P$ be the union of two edges of $P$.
    A \emph{cyclic wedge tangle} $\cT_i$ is a tangle in $(S^1 \times P, S^1 \times F)$.
    Via the reflection $r:P \rightarrow P$ sending one edge of $F$ to another,  a cyclic wedge tangle $\cT_i$ determines a reflected cyclic wedge tangle $\cT_i^R$.
    Then, gluing adjacent tangles together along the length $2n$ cycle sequence $(\cT_i,\cT_i^R,\dots,\cT_i^R)$ determines a link $\Dsolid{2n}{\cT_i} \subset V$, called the \emph{$2n$-replicant}.
    We say that $\cT_i$ is \emph{$2n$-hyperbolic} if $\Dsolid{2n}{\cT_i}$ is hyperbolic, in which case we define the volume
    \[
        \volsolid{2n}{\cT_i} := \frac{\vol\prn{\Dsolid{2n}{\cT_i}}}{2n}.
    \]
    
    A \emph{wedge tangle} $\cT_{ij}$ is a tangle in $(I \times P, I \times F \cup \partial I \times P)$.
    Similarly, $r$ determines a reflected tangle $\cT^R_{ij}$, the sequence $(\cT_{ij},\cT_{ij}^R,\dots,\cT_{ij}^R)$ determines a solid cylindrical tangle, and doubling determines a cyclic wedge tangle.
    Fixing the number $2n$, the $2n$-replicant of the cyclic wedge tangle is the same as the $2$-replicant of the solid cylindrical tangle, so we may define the \emph{$(2n,2)$-replicant} $\Dsolid{(2n,2)}{\cT_i}$ to be the resulting link from either replication.
    We say that $\cT_{ij}$ is \emph{$(2n,2)$-hyperbolic} if $\Dsolid{(2n,2)}{\cT_{ij}}$ is hyperbolic, in which case we define
    \[
        \volsolid{(2n,2)}{\cT_{ij}} := \frac{\vol\prn{\Dsolid{(2n,2)}{\cT_{ij}}}}{4n}.
    \]
\end{definition}

See Figure \ref{wedgetangle} for a wedge tangle $\cT$ which, when doubled one way, yields a solid-cylinder tangle, and when doubled another way, yields a cyclic wedge tangle. Each of those, when doubled, both generate $D^{(2,2)}(\cT)$.
The fact that doubling over the two sets of endpoints commute with each other is a special case of Proposition \ref{Replication commuting proposition}.

\begin{figure}[htpb]
    \centering
    \includegraphics[width=.7\textwidth]{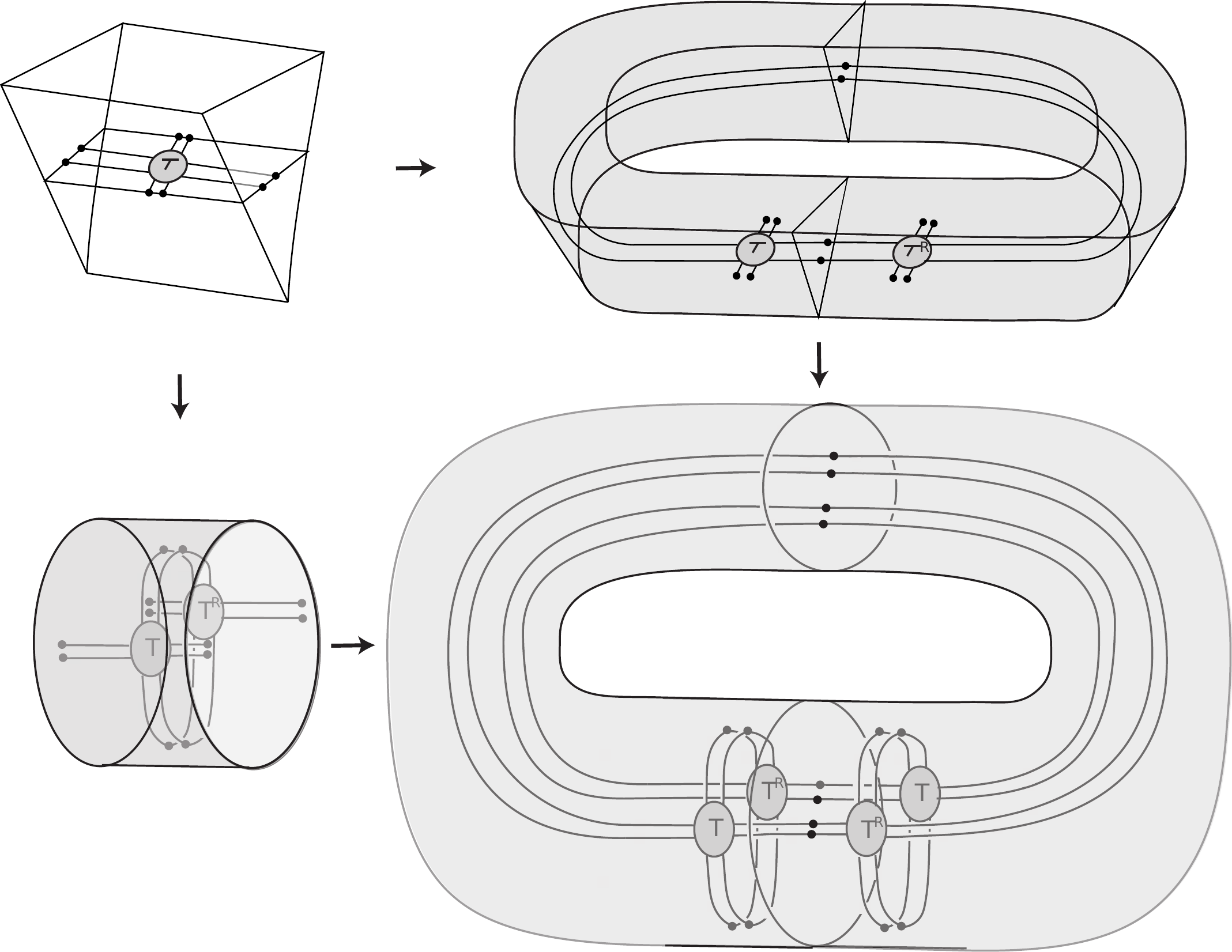}
    \caption{A wedge tangle generates either a solid-cylinder tangle or a cyclic wedge tangle, each of which generates the same link in the solid torus.}
    \label{wedgetangle}
\end{figure}

Using this notation, we have a similar result to Theorems \ref{Torus theorem} and \ref{Square tangles theorem}, which we condense into a single theorem which follows immediately from Theorem \ref{Main theorem inductive}.
\begin{theorem}\label{Solid torus theorem}
    Let $L \subset V$ be a link in a solid torus.
    \begin{enumerate}[label=\upshape{(\roman*)},ref={\thetheorem(\roman*)}]
        \item
            Suppose there exists a surface separating $L$ into solid-cylinder tangles $\cT_i$ such that each $\cT_i$ is $2$-hyperbolic.
            Then $L$ is hyperbolic, and its volume satisfies
            \[
                \vol(M) \geq \sum_i \volsolid{2}{\cT_i}.
            \]
        \item Suppose there exists a starburst separating $L$ into $2n$ cyclic wedge tangles $\cT_i$ such that each $\cT_i$ is $2n$-hyperbolic.
            Then $L$ is hyperbolic, and its volume satisfies
            \[
                \vol(M) \geq \sum_i \volsolid{2n}{\cT_i}.
            \]
        \item
            Suppose there is a starburst separating a solid-cylinder tangle $\cT_{i}$ into a cycle of $2n$ wedge tangles $\cT_{ij}$ such that each wedge tangle is $(2n,2)$-hyperbolic.
            Then, $\cT_i$ is $2$-hyperbolic in the solid torus, with volume satisfying
            \[
                \volsolid{2}{\cT_i} \geq \sum_j \volsolid{(2n,2)}{\cT_{ij}}.
            \]
        \item
            Suppose there is a starburst separating a cyclic wedge tangle $\cT_{i}$ into a cycle of wedge tangles $\cT_{ij}$ such that each wedge tangle is $(2n,2)$-hyperbolic in a solid torus.
            Then, $\cT_i$ is $2n$-hyperbolic in the solid torus, with volume satisfying
            \[                   \volsolid{2n}{\cT_i} \geq \sum_j \volsolid{(2n,2)}{\cT_{ij}}.
            \]
    \end{enumerate}
    \qed
\end{theorem}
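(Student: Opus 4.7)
Each of the four parts is designed to be a direct application of Theorem \ref{Main theorem inductive} (or its base case Theorem \ref{Main theorem}), so the strategy is to check, case by case, that the starburst and piece structure one attaches to each decomposition matches the hypotheses. The one conceptual point common to all four parts is to verify that the notions of $2$-replicant, $2n$-replicant, and $(2n,2)$-replicant spelled out in Definition \ref{Solid torus tangle definition} really do agree with the $\bm$-replicants of the associated $\ell$-pieces; this is guaranteed by Proposition \ref{Replication commuting proposition}, which says order of doubling across different surface pairs does not affect the resulting replicant.

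For part~(i), we equip $V \setminus L$ with the starburst $\bS$ consisting of the given disjoint separating surfaces (so $E = \emptyset$). Each solid-cylinder tangle complement becomes a $1$-piece whose distinguished surfaces $(V^1,V^2)$ are the two disks $D \times \partial I$. The hypothesis that each $\cT_i$ is $2$-hyperbolic is precisely the hypothesis that the associated piece is $2$-hyperbolic, and Theorem \ref{Main theorem} (after passing through the straightforward $E = \emptyset$ reduction already recorded after Theorem \ref{Main theorem separating}) yields both hyperbolicity and the stated volume bound.

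For part~(ii), the decomposing starburst $\bS$ consists of $n$ punctured disks meeting along the core of the solid torus, cutting $V \setminus L$ into $2n$ cyclic wedge tangle complements, each a $1$-piece whose distinguished surfaces $(V^1,V^2)$ correspond to the two faces $S^1 \times F$. The cyclic gluing pattern built into Definition \ref{Solid torus tangle definition} matches the $2n$-fold cyclic gluing in the definition of $D^{2n}$ for pieces, so $2n$-hyperbolicity of cyclic wedge tangles is exactly $2n$-hyperbolicity of the associated pieces. Then Theorem \ref{Main theorem} applied to this $n$-sheeted starburst delivers the conclusion.

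Parts~(iii) and~(iv) are the inductive steps, handled identically by Theorem \ref{Main theorem inductive} with $\ell=2$. In part~(iii) we start with a solid-cylinder tangle complement as a $1$-piece with $(V^1,V^2)$ given by the top and bottom disks $D \times \partial I$; the $n$-sheeted starburst promotes this to a $2$-piece structure on the wedge tangle complements, adding the surfaces $(V^3,V^4)$ coming from $I \times F$. The $(2n,2)$-replicant of Definition \ref{Solid torus tangle definition} matches the $(2n,2)$-replicant of this $2$-piece, so Theorem \ref{Main theorem inductive} upgrades $(2n,2)$-hyperbolicity of the wedge tangles to $2$-hyperbolicity of the solid-cylinder tangle with the stated volume inequality. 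Part~(iv) is identical after swapping the roles of the doubling indices: the $1$-piece is now the cyclic wedge tangle complement (with $(V^1,V^2)$ the two $S^1 \times F$ faces) and the inherited index is $2n$ in place of $2$. The only genuine verification to carry out is boundary-compatibility, which holds automatically in each case because the intersection locus of the starburst surfaces sits in the interior of the solid torus and each constituent tangle complement has only disk or annulus boundary from the original $\partial V$; no genuine obstacle should appear.
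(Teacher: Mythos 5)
Your proposal is correct and follows the paper's own route: the paper proves this theorem precisely by declaring it an immediate consequence of Theorem \ref{Main theorem inductive} (with Proposition \ref{Replication commuting proposition} reconciling the replicants of Definition \ref{Solid torus tangle definition} with the $\bm$-replicants of the associated pieces), and your case-by-case verification simply makes that one-line appeal explicit. The only slip is descriptive: in part (ii) the starburst surfaces are punctured longitudinal annuli meeting along the core circle, not punctured disks, but this does not affect the argument.
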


We describe an analog to Proposition \ref{Tangle composition proposition};
the class of 2-hyperbolic solid-cylinder tangles is closed under composition by Theorem \ref{Main theorem}.
Further, Corollary \ref{General composition corollary} implies that the composition of $n$ cyclic-wedge tangles which are each $2mn$-hyperbolic is $2m$-hyperbolic in the solid torus.
Similar statements hold for wedge tangles, depending on which direction the composition follows.

There is a strong relationship between solid-cylinder tangles and thickened-cylinder tangles.
If a solid-cylinder tangle $\cT$ has an unknotted strand with endpoints on both components of $D \times \partial I$, then the complement of that strand determines a thickened-cylinder tangle.
The solid-cylinder tangle may be reconstructed from this thickened-cylinder tangle.
The complement of the double of the solid-cylinder tangle is homeomorphic to the complement of the associated thickened-cylinder tangle, so \cite{selectalternating} will yield a large class of examples of $2$-hyperbolic solid-cylinder tangles.

\subsection{Bracelet links within the sphere} \label{Bracelet subsection}


\begin{definition}\label{Bracelet tangle definition}
A \emph{saucer tangle} is a tangle in the piece defined in Figure \ref{saucer} which has at least two endpoints on each face and composition of saucer tangles is defined by attaching their endpoints, as in the thickened-cylinder case. As defined in the introduction, if $\cT_i$ is a saucer tangle, then the length-$2n$ sequence $(\cT_i,\cT_i^R,\dots,\cT_i^R)$ determines a bracelet link $\Dsphere{2n}{\cT_i} \subset S^3$ in the 3-sphere, called it $2n$-replicant.
   We say that $\cT_i$ is \emph{$2n$-hyperbolic} if $\Dsphere{2n}{\cT_i}$ is a hyperbolic link, in which case we define the volume
    \[
        \volsphere{2n}{\cT_i} := \frac{\vol\prn{\Dsphere{2n}{\cT_i}}}{2n}.
    \]
\end{definition}

\begin{remark}
    We make the restriction that saucer tangles have two endpoints on each face in service of Theorem \ref{Saucer hyperbolicity theorem}.
    This does not fundamentally change the class of $2m$-hyperbolic saucer tangles relative to the class of tangles without such a restriction;
    in the case $m \geq 2$, any tangle with $\leq 1$ endpoints on one face will have $2m$-replicant which is not a prime link, and hence is not hyperbolic, and in the case of a $2m=2$ hyperbolic tangle, we can choose a piece structure which has the same $2$-replicant and has at least 2 endpoints on each face.  
\end{remark}

Unlike Theorems \ref{Torus theorem}, \ref{Square tangles theorem}, and \ref{Solid torus theorem}, the intersection of the surfaces making up the evident starburst is closed in this case;
hence we may directly conclude the following theorem from Theorem \ref{Main theorem separating}.
\begin{theorem}\label{Bracelet theorem}
If $L$ is a bracelet link made of a cycle $(\cT_i)_{i=1}^{2n}$ of $2n$ saucer tangles such that each $\cT_i$ is $2n$-hyperbolic, then $L$ is hyperbolic and the volumes satisfy
    \[
        \vol(L) \geq \sum_i \volsphere{2n}{\cT_i}.
    \]
    \qed
\end{theorem}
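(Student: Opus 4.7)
The plan is to exhibit the decomposition of $M := S^3 \setminus L$ by the saucer-defining spheres as an $n$-sheeted separating starburst, and then apply Theorem \ref{Main theorem separating} verbatim. First I would let $\alpha \subset S^3$ be the common central circle of the $n$ spheres $\Sigma_1,\dots,\Sigma_n$ used to define the saucers, and set $\bS = \{\Sigma_1 \setminus L,\dots,\Sigma_n \setminus L\}$. Since no strand of $L$ meets $\alpha$, the intersection locus $E = \bigcap_k \Sigma_k = \alpha$ is a simple closed curve lying in the interior of $M$, so in particular $E$ does not meet $\partial M$. Each $\Sigma_i$ is two-sided (a punctured $2$-sphere in $S^3$), and a tubular neighborhood of $\alpha$ is a solid torus $D \times \alpha$ in which the $\Sigma_i$ restrict to $n$ radial segments $L_i \times \alpha$ arranged in cyclic order, verifying the local model $(U, U \cap \cup \Sigma_i) \cong (D \times E, S_n \times E)$ of the starburst definition.

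Next I would check that cutting $M$ along $\bS$ produces the $2n$ saucer-tangle complements arranged in a cycle. Removing a regular neighborhood of $\cup \Sigma_i$ from $S^3$ yields exactly $2n$ saucers meeting cyclically around $\alpha$, and intersecting each with $M$ gives the pieces $M_i = S_i \setminus \cT_i$. The two faces of each saucer $S_i$ meet along $\alpha$ and correspond, respectively, to the counterclockwise-pointing surface $V_i^1$ (facing $\cT_{i-1}$) and the clockwise-pointing surface $V_i^2$ (facing $\cT_{i+1}$); the gluing $V_i^2 \sim V_{i+1}^1$ recovers $M$. Thus $(M,\bS)$ is an $n$-sheeted separating starburst with associated pieces $(M_i,(V_i^1,V_i^2))$.

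Then I would identify the $2n$-replicant of the piece $(M_i,(V_i^1,V_i^2))$ with the complement of $\Dsphere{2n}{\cT_i}$. By definition \eqref{2n replication equation}, $D^{2n}(M_i,(V_i^1,V_i^2))$ glues $2n$ copies of the saucer complement in the cycle $(\cT_i,\cT_i^R,\cT_i,\dots,\cT_i^R)$, identifying alternating faces; this is precisely the complement in $S^3$ of the bracelet link $\Dsphere{2n}{\cT_i}$. Hence $2n$-hyperbolicity of the saucer tangle $\cT_i$ in the sense of Definition \ref{Bracelet tangle definition} coincides with $2n$-hyperbolicity of the piece $(M_i,(V_i^1,V_i^2))$, and $\volsphere{2n}{\cT_i} = \vol^{2n}(M_i)$. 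All boundary components of $M$ are tori (components of $\partial N(L)$), so the hypotheses of Theorem \ref{Main theorem separating} are met.

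Applying Theorem \ref{Main theorem separating} then yields that $M$ is hyperbolic with
\[
    \vol(M) \geq \sum_{i=1}^{2n} \vol^{2n}(M_i) = \sum_{i=1}^{2n} \volsphere{2n}{\cT_i},
\]
which is the desired conclusion. I expect the only mild obstacle to be the bookkeeping in step two, namely verifying that the clockwise/counterclockwise labelling of the faces of the saucers matches the cyclic identification prescribed in the definition of $D^{2n}$, so that the replicant is literally $S^3 \setminus \Dsphere{2n}{\cT_i}$ and not some other identification; but since the starburst is separating, this reduces to tracking which face of $S_i$ is glued to which face of $S_{i\pm 1}$, and is straightforward from the picture in Figure \ref{braceletreflection}.
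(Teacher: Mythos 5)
Your proposal is correct and matches the paper's argument: the paper also observes that the saucer decomposition is the evident $n$-sheeted separating starburst with closed intersection locus $E = \alpha$ disjoint from the torus boundary $\partial N(L)$, and concludes the theorem directly from Theorem \ref{Main theorem separating}. Your write-up just makes explicit the verifications (local model near $\alpha$, identification of $D^{2n}(M_i,(V_i^1,V_i^2))$ with $S^3 \setminus \Dsphere{2n}{\cT_i}$, and $\vol^{2n}(M_i) = \volsphere{2n}{\cT_i}$) that the paper leaves to the reader.
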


Hyperbolicity of saucer tangles will be discussed further in Section \ref{hyperbolicitytangles}, including composition of tangles and the relationship between $2m$-hyperbolicity and $2m'$-hyperbolicity for $m,m'$ distinct.

    
\subsection{Links in \texorpdfstring{$S^2 \times S^1$}{S2 x S3}} \label{spheretimescirclesubsection}

When we glue two solid tori together along their boundary, meridian  to meridian, we obtain $S^2 \times S^1$. Given a torus lattice link $L$ projected onto the separating torus, we can glue a triangular prism from each solid torus together along a square face from each on the separating torus to obtain a thickened bigon. See Figure \ref{thickened bigon square tangle}. Thus we can decompose $L$ into thickened bigon tangles.By Theorem \ref{Main theorem separating}, a similar theorem to our previous cases holds.  

\begin{figure}[htbp]
    \centering
    \includegraphics[scale=.4]{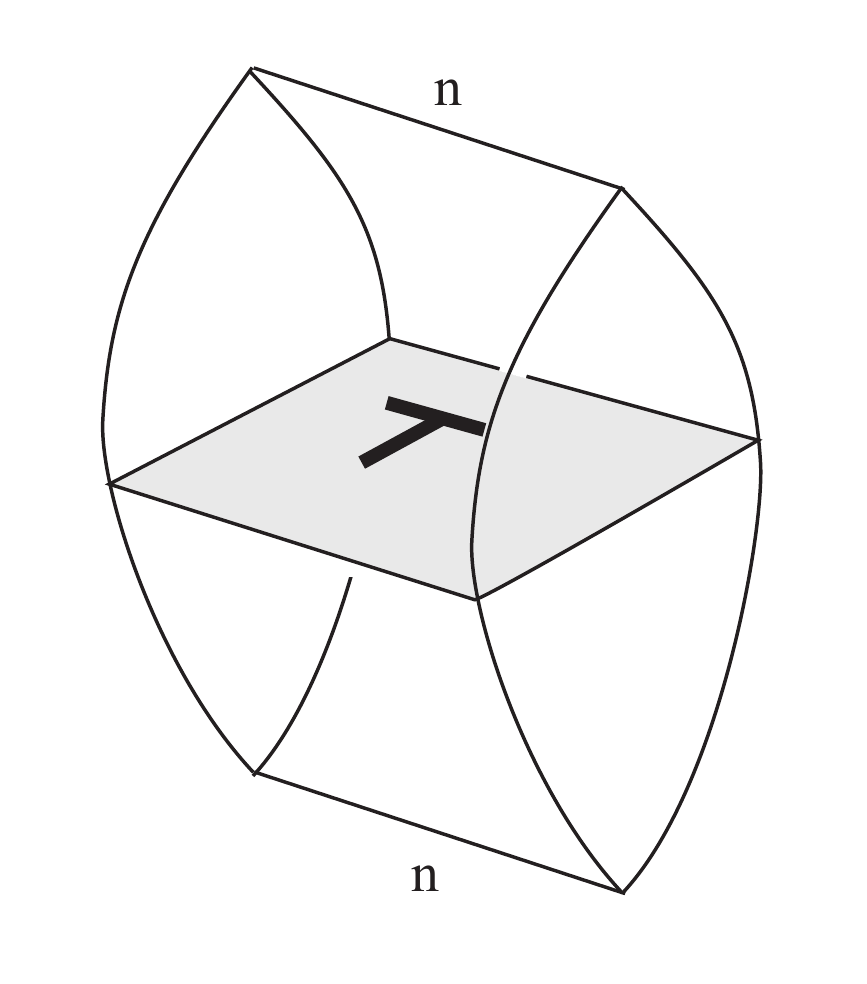}
    \caption{A square tangle in a thickened bigon.}
    \label{thickened bigon square tangle}
\end{figure}

\begin{theorem}\label{S^2 times S^1 theorem}
    Let $L$  be an $(2m, 2n)$-torus lattice link in $S^2 \times S^1$, consisting of $4mn$ thickened bigon tangles $\{ \cT_{ij}\}$. If each tangle $\cT_{ij}$ is $(2,2n)$-hyperbolic, then $L$ is hyperbolic, and its volume satisfies
            \[
                \vol(M) \geq \sum_{i,j} \volsolid{(2,2n)}{\cT_i}.
            \]
\end{theorem}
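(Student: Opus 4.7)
The plan is to adapt the proof of Theorem \ref{Square tangles theorem} to $S^2\times S^1$ by exhibiting an appropriate two-stage starburst decomposition and applying Theorem \ref{Main theorem separating inductive} iteratively. Geometrically, $S^2\times S^1 = V_1 \cup_T V_2$ carries the $(2m,2n)$-torus lattice on $T$, and I would cut it by two orthogonal families of surfaces: the first family consists of $2n$ disjoint $2$-spheres of the form $S^2\times\{\theta_i\}$ arising from meridians of $T$ (which bound disks in both solid tori and glue into spheres), slicing $S^2\times S^1$ cyclically into $2n$ slabs each diffeomorphic to $S^2\times I$; the second family, internal to each slab, is an $m$-sheeted starburst of ``great-circle-cylinder'' surfaces meeting along two polar arcs, which cut each slab into $2m$ thickened bigon pieces.

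First I would verify the piece structure: each thickened bigon tangle $\cT_{ij}$ acquires the structure of a $2$-piece with two pairs of distinguished boundary surfaces, a lateral pair coming from the starburst surfaces and a bigon-face pair coming from the meridional spheres. One checks that its $(2,2n)$-replicant is produced by first $2$-fold doubling across the lateral pair (closing the starburst direction and rebuilding a slab $S^2\times I$) followed by $2n$-fold cyclic replication across the bigon-face pair (closing the $S^1$-direction and rebuilding $S^2\times S^1$), and that this reproduces the link complement whose hyperbolicity is hypothesized. By Proposition \ref{Replication commuting proposition}, the order of the two replications is immaterial.

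Next I would apply Theorem \ref{Main theorem separating inductive} in two stages. First, for a single slab viewed as a $1$-piece with its two spherical faces as distinguished surfaces, the $m$-sheeted starburst of great-circle cylinders cuts it into $2m$ thickened bigon tangles, and the $(2,2n)$-hyperbolicity hypothesis together with the theorem yields that the slab is $2n$-hyperbolic with $2n$-volume bounded below by $\sum_j\volsolid{(2,2n)}{\cT_{ij}}$. Second, the outer collection of $2n$ disjoint spheres has $E=\emptyset$; the case $E=\emptyset$ of Theorem \ref{Main theorem separating}, which reduces to Theorem \ref{Cutting and pasting theorem} applied along totally geodesic spheres as remarked in Section \ref{Main theorem separating section}, yields that $S^2\times S^1\setminus L$ is hyperbolic with $\vol(L)$ at least the sum of the $2n$-volumes of the $2n$ slabs, hence at least $\sum_{i,j}\volsolid{(2,2n)}{\cT_{ij}}$.

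The main difficulty will be keeping careful track of piece structures, orientations, and labelling data across the two stages of cutting so that the iterated replication indeed produces the intended $(2,2n)$-replicant; Proposition \ref{Replication commuting proposition} handles much of this bookkeeping, but one must also verify that the intersection locus of the great-circle cylinders (the two polar $S^1$'s) does not intersect the spherical decomposition surfaces, so that the hypotheses of Theorem \ref{Main theorem separating inductive} are met. Finally, boundary compatibility as introduced before Theorem \ref{Main theorem} must be checked, but in the generic setting where $L$ is a link and $\partial(S^2\times S^1\setminus L)$ consists only of tori, this condition holds automatically.
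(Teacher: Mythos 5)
Your overall strategy---two orthogonal families of decomposing surfaces, one consisting of disjoint spheres and one forming a starburst through two ``polar'' circles, combined with Theorems \ref{Main theorem separating} and \ref{Main theorem separating inductive}---is the right one, and is essentially what the paper intends when it glues the triangular prisms of the two solid tori into thickened bigons and cites Theorem \ref{Main theorem separating}. However, you have attached the replication indices to the wrong directions, and as a result your first stage does not follow from the hypothesis. Collapsing an $m$-sheeted starburst inside a slab requires the resulting $2$-pieces to be $(\,\cdot\,,2m)$-hyperbolic in the slot corresponding to that starburst; your claim that ``$(2,2n)$-hyperbolicity yields that the slab is $2n$-hyperbolic'' would need the tangles to be $(2n,2m)$-hyperbolic, which is not what is assumed. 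In the statement being proved, the $2n$ in $(2,2n)$ must be carried by the starburst direction: the correct decomposition takes the $n$ tori passing through the two core circles of the solid tori (an $n$-sheeted starburst in $S^2\times S^1$ with intersection locus $E$ equal to the two closed cores) cutting the manifold into $2n$ pieces of the form $S^1\times B$ with $B$ a bigon, while the $2m$ meridian spheres contribute the disjoint family, for which $E=\emptyset$ and only doubling---the ``$2$'' of $(2,2n)$---is required.

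The order of your two stages is the second place the argument breaks. By performing the sphere cuts first and then running the starburst inside a slab $S^2\times I$, the intersection locus of your great-circle cylinders becomes two arcs $\{N,S\}\times I$ whose endpoints lie on the slab's distinguished spherical faces; consequently $V^1_i\cap V^2_i$ does meet $H_i$, violating a hypothesis of Theorem \ref{Main theorem separating inductive} (you flag this as something to verify, but it fails rather than holds). The fix is to reverse the stages: first, inside each $1$-piece $S^1\times B_j$, the $2m$ bigon cross-sections are disjoint, so $(2,2n)$-hyperbolicity of the tangles (which equals $(2n,2)$-hyperbolicity after reordering the face pairs by Proposition \ref{Replication commuting proposition}) gives $2n$-hyperbolicity of $S^1\times B_j$ with $\vol^{2n}(S^1\times B_j)\geq \sum_i \vol^{(2,2n)}(\cT_{ij})$; then Theorem \ref{Main theorem separating} applied to the $n$-sheeted starburst of tori in $S^2\times S^1$, whose intersection locus is closed and disjoint from $\partial M$, yields hyperbolicity of $L$ and the stated volume bound.
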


\section{Various hyperbolicities for tangles} \label{hyperbolicitytangles}

Define a saucer tangle to be a {\it braid} if all strings of the tangle move monotonically left to right from the left face to the right face, intersecting each disk in the saucer parallel to the faces exactly once. We note that if a tangle is $2n$-hyperbolic, then its composition with a braid is still $2n$-hyperbolic with the same $2n$-volume. This is immediate because the composition of an $\ell$-string braid with its reflection yields a trivial braid of $\ell$ strings. Hence, the $2n$-replicant of a tangle $T$ composed with a braid yields the $2n$-replicant of $T$ alone.  

\begin{theorem}\label{Saucer hyperbolicity theorem}
    Suppose $\cT$ is a $2n$-hyperbolic tangle in a saucer for some $n \geq 1$.
    Then, $\cT$ is $2m$-hyperbolic for all $m \geq n$.
\end{theorem}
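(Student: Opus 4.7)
The plan is to induct on $m \geq n$, with base case $m = n$ provided by hypothesis. For the inductive step I assume $\cT$ is $2m$-hyperbolic and aim to establish $2(m+1)$-hyperbolicity by means of a single global application of Theorem \ref{Cutting and pasting theorem}. The essential observation is that the disjoint union
\[
    M \;=\; \bigsqcup_{i=1}^{m+1} D^{2m}(\cT)
    \qquad\text{and}\qquad
    M_0 \;=\; \bigsqcup_{i=1}^{m} D^{2(m+1)}(\cT)
\]
contain the same total number of saucer-tangle regions ($2m(m+1)$) and the same total number of separating bracelet spheres ($m(m+1)$), with matching totals in each of the two puncture-type classes ($\ell_1$-punctured and $\ell_2$-punctured) that alternate around a bracelet.

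By the inductive hypothesis each component of $M$ is hyperbolic, so $M$ itself is hyperbolic. Each of its $m(m+1)$ bracelet-separating spheres is totally geodesic by Theorem \ref{fixed surface lemma}, being the fixed set of the reflective isometry of the corresponding bracelet component. Let $\Sigma \subset M$ be the union of all these totally geodesic spheres. The plan is then to exhibit a ``necklace regrouping'' that rearranges $m+1$ cycles of length $2m$ into $m$ cycles of length $2(m+1)$, and to realize this rearrangement topologically by a diffeomorphism $\psi \colon \Sigma \to \Sigma$ permuting the spheres while respecting the $\ell_1$- and $\ell_2$-puncture types. Concretely, when $m$ is odd one may cut each of the $m+1$ cycles along all of its spheres (so that the pieces are single saucers) and re-string them into $m$ cycles of length $2(m+1)$ with the correct alternation of reflections; when $m$ is even one instead uses half of the spheres in each cycle to produce paths of length two that can be concatenated. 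Applying Theorem \ref{Cutting and pasting theorem} once to the pair $(\Sigma, \psi)$ yields a hyperbolic manifold $M'$, and for a suitable choice of $\psi$ we have $M' \cong M_0$, so in particular $D^{2(m+1)}(\cT)$ is hyperbolic. This completes the inductive step, and hence the theorem.

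\paragraph{Main obstacle.}
The hard part is the explicit construction of the diffeomorphism $\psi$ and the verification that the resulting cut-and-paste produces $M_0$ rather than some other rearrangement of $(m+1)\cdot 2m$ tangles. The combinatorial necklace regrouping is elementary, but its topological implementation requires careful bookkeeping: each full bracelet sphere meets $\alpha$ in a circle and is divided into two half-disks whose identifications carry the ``cyclic order'' of the bracelet, so $\psi$ must preserve the induced half-disk data on either side of $\alpha$ and must match $\ell_1$- with $\ell_1$- and $\ell_2$- with $\ell_2$-punctured spheres throughout. Once this bookkeeping is verified the theorem follows at once from Theorem \ref{Cutting and pasting theorem}, bypassing any need to invoke branched covers or external orbifold results.
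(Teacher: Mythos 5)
There is a genuine gap, and it sits at the heart of your inductive step. Your plan hinges on realizing the ``necklace regrouping'' as a single surgery along a properly embedded two-sided totally geodesic surface $\Sigma$, so that Theorem \ref{Cutting and pasting theorem} applies. But for $m \geq 2$ the $m$ reflection spheres in a single copy of $D^{2m}(\cT)$ are \emph{not disjoint}: they all intersect along the central axis circle $\alpha$ (this is exactly the ``starburst'' locus $E$ that Section 2 of the paper is built to handle). Their union is therefore not an embedded surface, and ``cutting each cycle along all of its spheres'' is not a move that Theorem \ref{Cutting and pasting theorem} licenses. If you instead take only one sphere per component (the most you can fit into a properly embedded $\Sigma$), the resulting pieces are chains of $m$ saucers, and a cycle of length $2(m+1)$ cannot be assembled from chains of length $m$ unless $m$ divides $2(m+1)$, i.e.\ $m\le 2$. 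The failure is visible already at $m=1$: cutting two copies of $D^{2}(\cT)$ along their single spheres yields four pieces each with \emph{one} boundary sphere, and any regluing of such pieces in pairs returns a disjoint union of length-$2$ bracelets, never a length-$4$ bracelet, whose two spheres must intersect along $\alpha$. More generally, surgery along closed embedded surfaces can never change the number of sheets meeting along the singular locus $E$, which is $2m$ in $D^{2m}(\cT)$ and $2(m+1)$ in $D^{2(m+1)}(\cT)$. Iterating the cut-and-paste does not rescue the argument either, since after one regluing the images of the remaining spheres are no longer known to be totally geodesic (Theorem \ref{Cutting and pasting theorem} tells you they generically are not, as equality of volumes fails), so subsequent cuts are not justified.

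The paper's proof takes an entirely different route: it shows directly that $D^{2(n+1)}(\cT)$ is hyperbolic by verifying the topological criteria — the decomposition surfaces are incompressible, the link $L'$ is nontrivial and prime, and the complement contains no essential spheres, tori, or annuli (the last step using Hatcher's lemma and the classification of Seifert fibered link complements). Each of these verifications works by intersecting a putative essential surface with the starburst $\bS'$, extracting an innermost disk, bigon, or annulus inside a single saucer, and reflecting it into the known-hyperbolic $D^{2n}(\cT)$ to derive a contradiction. If you want to salvage a cut-and-paste flavored argument you would need the machinery of Section 2 (which manages intersecting surfaces via the palindromic-word algebra), but note that even that machinery runs in the wrong direction here: it deduces hyperbolicity of the assembled manifold from $2(m+1)$-hyperbolicity of the pieces, which is precisely what is to be proved.
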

\begin{proof}
    It suffices to prove that $\cT$ is $2(n+1)$-hyperbolic, which is by definition showing  $D^{2(n+1)}(\cT)$ is hyperbolic. Let $L'$ be the link  that is given by the $2(n+1)$-replicant of  $\cT$. So we want to show $S^3 \setminus L'$ is hyperbolic.
    
    We are assuming $D^{2n}(\cT)$ is  hyperbolic, which means that the link $L$ in $S^3$ that is the $2n$-replicant of $\cT$ is hyperbolic. Define $\bS$ to be the collection of surfaces that decompose $D^{2n}(\cT)$ into homeomorphic copies of $\cT$, and let $E$ be the intersection set of the surfaces. Note that in the case $n=1$, $E = \emptyset$.
    Because $D^{2n}(\cT)$ is hyperbolic and reflection in any of the surfaces is a symmetry of the manifold, Theorem \ref{fixed surface lemma} implies that each surface is totally geodesic, which immediately implies it is incompressible in $D^{2n}(\cT)$. 
    
    Let $\bS'$ be the collection of surfaces that decompose $D^{2(n+1)}(\cT)$ into homeomorphic copies of the saucer tangle $\cT$. Let $E'$ be their intersection set. Note that $E' \neq \emptyset$ for all $n$. 
    
    We begin the proof by showing that each surface in $\bS'$ is incompressible.
    Then, we will use this to prove that $L'$ is nontrivial, and use this to prove that essential disks induce essential spheres.
    Then, we again use incompressibility to prove that essential spheres induce essential spheres in $D^{2n}(\cT)$, so $2n$-hyperbolicity precludes essential disks and spheres in $D^{2(n+1)}(\cT)$.
    Then, we prove that $L'$ is a prime link, precluding certain essential annuli.
    Given a torus in $D^{2(n+1)}(\cT)$, we use this together with a strategy of restricting to pieces and replicating in order to prove that there are no essential tori in $D^{2(n+1)}(\cT)$.
    Lastly, we use the properties of $L'$ already proved to show that $L'$ is not Seifert fibered, and use \cite{Hatcher, BM} to conclude that $D^{2(n+1)}(\cT)$ has no essential annuli, implying that it is hyperbolic.
    
    \sauceremph{Each surface in $\bS'$ is incompressible.}
    Suppose for contradiction that $\Sigma \in \bS'$ is compressible.
    Choose a compressing disk $D$ for $\Sigma$ that has the least number of intersections with $E'$, and then the least number of simple closed curve intersections with $\bS'$ among such disks. 
    The intersection graph of $D$ with $\bS'$ on $D$ will be a collection of arcs with both endpoints on the boundary and a collection of simple closed curves. 
    If there is a simple closed intersection curve with a surface $\Sigma'$, we replace the disk $D$ with an innermost such disk on $D$, which does not intersect any other surfaces if it exists.  This contradicts the minimality of the intersection set for $D$, so there are no simple closed curves of intersection. 
    
    If there are arcs of intersection on $D$,  choose an outermost arc of intersection on $D$ cutting a bigonal disk $D'$ from $D$. Then $D'$ is a bigonal disk in a particular saucer $S_i$  with its edges in the  faces of $S_i$. If both edges are in the same face of $S_i$, then when replicated in $D^{2n}(\cT)$, it yields a set of spheres, each of which must bound balls in $S^3 \setminus L$. Thus we could have isotoped $D$ to eliminate intersections in $D^{2(n+1)}(\cT)$, a contradiction to minimality.
    
    If the two edges of $D'$ are on distinct faces of $S_i$, replicating $D'$ yields a sphere in $D^{2n}(\cT)$, which must be inessential. 
    Hence it bounds a ball. Thus $D'$ must cut a ball from the saucer tangle in $S_i$. This means that $D$ could have been isotoped to eliminate the intersection curve in $D^{2(n+1)}(\cT)$, contradicting minimality. This proves that the surfaces in $\bS'$ are all incompressible. 
    
    \sauceremph{The link $L'$ is nontrivial.}
    Suppose first that the link $L'$ is trivial. Then $L'$ consists of a single component bounding a disk $D$. By incompressibility of the surfaces in $\bS'$, we can remove all simple closed curves of intersection with any of the surfaces with $D$. For any surface $\Sigma$ in $\bS'$, consider an outermost intersection arc in $D \cap \Sigma$ on $D$.
    It cuts a disk $D'$ from $D$, the boundary of a regular neighborhood of which is a compressing disk for $\Sigma$, since $\Sigma$ has at least four punctures by $L'$.
    Thus, $D$ and therefore the link $L'$ cannot intersect any of the surfaces in $\bS'$, again a contradiction to our construction. Therefore $L'$ cannot be a trivial knot.
    
    \sauceremph{The manifold $D^{2(n+1)}(\cT)$ has no essential disks or spheres.}
    Note that for a nontrivial link, the boundary of a regular neighborhood of an essential disk in its complement is an essential sphere.
    Hence we can prove that there are no essential disks in $D^{2(n+1)}(\cT)$ by proving that there are no essential spheres.
    
    Suppose now that there is an essential sphere 
    $S$ in $D^{2(n+1)}(\cT)$. Because of the incompressibility of the surfaces in $\bS'$, we can isotope the sphere to miss each in turn until it misses all of the surfaces. Hence, it now resides in a single saucer. Thus, there is a copy $S'$ that exists in $D^{2n}(\cT)$ (in fact $2n$ copies). But in $D^{2n}(\cT)$, $S'$ must bound a ball. Hence it bounds a ball in the single saucer. Thus, $S$ bounds a ball in $D^{2(n+1)}(\cT)$, a contradiction to the essentiality of the sphere.
    
    \sauceremph{The link $L'$ is prime.}
    Before moving on to  essential tori, we first prove that the link $L'$ in $S^3$ is prime. Suppose not, and let $S$ be the essential (incompressible, boundary-incompressible)  twice punctured sphere in the complement of $L'$. Consider its intersections with the surfaces in $\bS'$, and assume we have chosen $S$ to have a minimum number of intersection curves. If it intersects one of the surfaces $\Sigma$ in a closed curve that is trivial on the punctured sphere, then we can isotope to remove the intersection. If it intersects one of the surfaces in a closed curve that is nontrivial on the punctured sphere and that avoids the axis of the starburst on $\Sigma$, and all intersection curves avoid the axis, then taking an innermost such on $S$  yields a disk $D$ that is once-punctured on $S$ and that is properly embedded in a single saucer. This reflects to a twice-punctured sphere $S'$ in $D^{2n}(\cT)$, which must then bound an unknotted arc to one side in $L$ since $L$ must be prime. 
    If $n \geq 2$, then there are multiple such twice-punctured spheres so it must be that they contain a trivial arc to the inside.
    However, then $D$ cuts a ball from the saucer in which it lies which contains an unknotted arc in $D^{2(n+1)}(\cT)$, and we can isotope $D$ to remove the intersection, contradicting minimality of the number of intersection curves.
    
    If $n = 1$, it could be that the unknotted arc is to the outside of $S'$. But then the opposite face of the saucer contains only one endpoint of the tangle, a contradiction to our assumption that the number of endpoints of the tangles in a face is at least two. 
    
    Finally, suppose that $S$ intersects a surface $\Sigma$ in a curve nontrivial on $S$ that intersects the axis $E'$ of the starburst on $\Sigma$.  Then $S$ intersects all of the surfaces in $\bS'$. The set of intersection arcs forms a $2(n+1)$-valent graph on $S$ with an even number of vertices. Choose a component of this graph that is innermost on $S$, in the sense that all of its complementary regions but one do not contain further components of the graph. Dropping the other components of the graph, we now have a connected $2(n+1)$-valent graph on $S$ with an even number of vertices. 
    
    In fact, there can be no odd cycles in $G$. This is because the cyclic order of pieces around a vertex must be preserved. Hence, for any edge in $G$, the cyclic orders must be clockwise around one endpoint and counterclockwise around the other. In particular, this means $G$ must be a bipartite graph, with all even-length cycles.
    
    For such a graph on a sphere, define $f_{2n}$ to be the number of complementary faces with $2n$ edges. Then if $v$, $e$ and $f$ are the number of vertices, edges and faces, by Euler characteristic, we know $v-e+f = 2$, where $e = (n+1) v$, so $-\frac{n}{n+1}e + f = 2$. However, $$f = f_2 + f_4 + f_6 +\dots$$ and $$e = \frac{2f_2 + 4f_4 + 6f_6 + \dots}{2}$$ Plugging these into the Euler equation and algebra yields:
    
    $$\frac{f_2}{n+1} +\frac{1-n}{n+1}f_4 + \frac{1-2n}{n+1}f_6 +\dots = 2$$
    
    Thus, $$f_2 \geq 2(n+1) \geq 4$$
    
    
    At most two of the at least four bigonal faces can be punctured, since $S$ has only two punctures, so there is a bigonal disk face $D'$ that is not punctured. Then by the same argument as before (replicating in $D^{2n}(\cT)$ to obtain a sphere which must bound a ball), we can isotope $S$ in $D^{2(n+1)}(\cT)$ to lower the number of intersections, a contradiction to our choice of $S$.  Thus $L'$ is prime in $S^3$. 
    
    

    \sauceremph{The manifold $D^{2(n+1)}(\cT)$ has no essential tori.}
    Suppose now that there is an essential torus $T$ in $D^{2(n+1)}(\cT)$. By the incompressibility of the surfaces in $\bS'$, we can isotope $T$ to eliminate any trivial intersection curves on the torus with any of the surfaces in $\bS'$.
    
    We first consider the case that the torus does not intersect the axis of the starburst. Then $T$ intersects the collection of surfaces $\bS'$ in a finite collection of parallel curves on $T$. Assume we have chosen $T$ to have a minimal number of these intersections. These curves divide $T$ into a finite collection of annuli $\{A_i\}$ which are properly embedded in $D^{2(n+1)}(\cT) \dbs \bS'$. If there is an annulus $A_i$  in a given saucer $S_i$ with its boundaries on the same copy of a surface $\Sigma$ from $\bS'$, then it replicates in 
    $D^{2n}(\cT)$ to a collection of tori, each made up of two copies of the annulus, and each of which cannot be essential. Let $T'$ be one of these tori. Then it is either boundary-parallel or it is compressible. If it is boundary-parallel, the annulus $A_i$ is boundary-parallel to a segment of a link component in the saucer $S_i$. Then meridianally compressing $T$ in $D^{2(n+1)}(\cT)$ either yields an essential annulus, contradicting primeness of $L'$ or it shows that $T$ was in fact boundary-parallel, also a contradiction. 
    
    If $T'$ is compressible, then there is a disk $D$ that realizes the compression with boundary a longitude on $T'$. The disk $D$ can be isotoped to intersect a single saucer in a disk with boundary one arc a nontrivial arc on the annulus $A_i$ and the other arc on the boundary of the saucer. Hence, we can isotope a nontrivial arc on the annulus $A_i$ into the neighboring saucer, leaving a disk from $T$ still in $S_i$. By incompressibility, we can eliminate this intersection, a contradiction to minimality of the intersection curves.
    
    Thus, all annuli $A_i$ have their two boundaries on two different faces of each saucer. Since a pair of adjacent  saucers and tangles are homeomorphic via reflection, we can consider $A_i$ and $A_{i+1}^R$ in the same saucer $S_i$. Note that both $A_i$ and $A_{i+1}^R$ share one boundary component on the face $F_1$ of $\partial S_i$. 
    
    We now isotope $A_i$ and $A_{i+1}^R$ into general position and consider intersection curves between them. These can take the form of arcs or simple closed curves. If there is a simple closed curve intersection, then if it is trivial on both annuli, by taking innermost such curves, the disks cut from them form a sphere in $S_i$. But as $S_i$ appears in $D^{2n}(\cT)$, this sphere must bound a ball. Hence we can isotope to remove the intersection curve. If one of the two curves is trivial, this contradicts the incompressibility of the torus. 
    We consider the case that both are nontrivial shortly.
    
    \begin{figure}[htbp]
    \centering
    \includegraphics[scale=0.7]{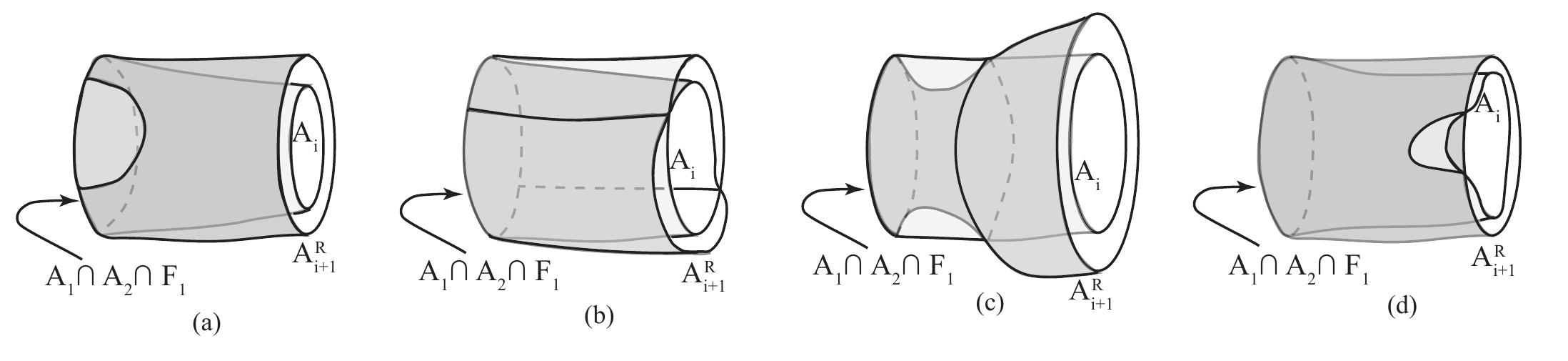}
    \caption{The two annuli $A_i$ and $A_{i+1}^R$ can intersect in various ways.}
    \label{intersectingannuli}
\end{figure}
    
    Suppose there is an intersection arc beginning and ending on their shared boundary, as in Figure \ref{intersectingannuli}(a). Then it must cut a disk from each and again, we can form a sphere that allows us to isotope the intersection away.
    
    
    Suppose there is an intersection curve that runs from the shared boundary on $F_1$ to the opposite boundary on each of $A_i$ and $A_{i+1}^R$ on $F_2$, as in Figure \ref{intersectingannuli}(b). Then the opposite boundaries must cross each other on $F_2$, so there must be an even number of such curves. Then a pair of disks on the annuli cut from two adjacent vertical arcs on each share their bottom edge and their two vertical edges. Together they form a disk with boundary in $F_2$. When we take the neighboring saucer, this disk doubles to a sphere, which appears in these two copies of the saucer and hence in $D^{2n}(\cT)$. 
    But it must bound a ball in $D^{2n}(\cT)$ and hence in $D^{2(n+1)}(\cT)$. Thus, we can isotope the annuli to eliminate these two intersection curves.
    
    So there are now no intersection arcs that end on the shared boundary of the two annuli. Suppose that there is a simple closed curve of intersection. As shown above, it must be nontrivial on both annuli, as in Figure \ref{intersectingannuli}(c). Taking the two annuli it cuts from $A_i$ and $A_{i+1}^R$ that share their shared boundary, we can construct a torus living in $S_i$, with  a longitude on the boundary of $S_i$. Since $S_i$ appears in $D^{2n}(\cT)$, this torus $T'$ must either compress or be boundary parallel there. Because $A_i$ and $A_{i+1}^R$ are incompressible, $T'$ cannot compress to the outside. If it compresses to the inside, then it compresses to a sphere $S' \subset S_i$, which must bound a ball; hence $T'$ bounds a solid torus in $S_i$, and we can isotope $A_{i+1}^R$ to $A_i$, eliminating the intersection curve.  
    
    If $T'$ is boundary parallel, then there is a link component that forms the core curve for the solid torus bounded by $T'$. But this link component wraps once around both annuli. When we reflect $S_i$ in $D^{2n}(\cT)$, this link component is reflected to a second copy that is parallel to the first. That is, they will form the boundary of an annulus that follows either $A_i$ and its reflection or $A_{i+1}$ and its reflection, contradicting the hyperbolicity of $D^{2n}(\cT)$. Thus there are no simple closed curves of intersection.
    
    The final possibility for intersection curves are arcs of intersection that end on the other unshared boundaries of the two annuli, as in Figure \ref{intersectingannuli}(d). These must cut disks from the two annuli. Together these two disks form a disk  with boundary on $F_2$. By incompressibilty, we know that this disk together with the disk it cuts from $F_2$ bounds a ball, and therefore we can isotope the two annuli to eliminate the intersection.
    
    Thus, we now have no intersections curves. The union of the two annuli forms a new annulus, the reflected double over $F_2$ of which is a torus $T''$. That torus exists in $D^{2n}(\cT)$ and therefore it must either be boundary parallel or compressible there. If it is boundary parallel there, there is a single link component inside it that forms the core curve for the solid torus bounded by $T''$. But this contradicts the fact $T''$ was obtained by reflection doubling. If $T''$ is compressible then it must bound a solid torus with no components inside it. Therefore $A_{i+1}^R$ is isotopic to $A_i$.
    
    Thus we see that the original torus $T$ can be isotoped so that all of the $A_i$ are the same in each $S_i$. That is, the torus is made up of $A_1, A_1^R, A_1, \dots, A_1^R$, in the respective copies of $S_1$ and $S_1^R$, making up $D^{2(n+1)}({\cT})$.
    
    Consider the corresponding torus $T'''$ in $D^{2n}(\cT)$. It consists of $2n$ copies of $A_1$. It must be either boundary-parallel or compressible. if it is boundary-parallel the solid cylinder bounded by $A_1$ contains a core curve that is an arc in a link component. But then $T$ is also boundary-parallel in $D^{2(n+1)}(\cT)$.
   
    If $T'''$ is compressible in $D^{2n}(\cT)$, the compression must be to the outside. Thus, there is a compressing disk $D$ which we can choose to minimize the intersection curves with $\bS$ and such that its boundary is a longitude on $T'''$. By incompressibility of the surfaces, we can isotope it so that it has a single arc of intersection with each surface $\Sigma_i$ and all of those arcs meet at the intersection of the surfaces. Then take $D'$ to be its intersection with saucer $S_1$. If we then take the union of the copies of $D'$ and its reflections, we obtain a compressing disk for $T$ in $D^{2(n+1)}(\cT)$. Thus there are no essential tori that avoid $E$ in $D^{2(n+1)}(\cT)$.
    
    
    Suppose now that there is an essential torus in $D^{2(n+1)}(\cT)$ that intersects the axis $E$ of the starburst. We take the essential torus with the least number of intersections with $E$, and then with the least number of curves of intersection with $\bS'$.  Since $E$ is the only place that any two of the surfaces intersect and all the surfaces intersect there, the collection of intersection curves of $T$ with $\bS'$ forms a graph $G$ on $T$ with an even number of vertices, each of valency $2(n+1) \geq 4$. Since $T$ intersects each surface in a disjoint set of curves, each surface $\Sigma_i$ intersects $T$ in a parallel set of $(p_i, q_i)$-curves.
    
    If there is a component of $G$ that lies in either a disk or annulus on $T$, then as in the case of an essential sphere that intersects $E'$, there is a complementary region to $G$ that is a bigon, and this generates a sphere in $D^{2n}(\cT)$ that bounds a ball, and that then allows us to isotope $T$ to reduce the number of intersections with $E'$ in $D^{2(n+1)}(\cT)$, a contradiction. 
    
    So $G$ must consist of a single component such that all of its complementary regions are disks. 
    
    We use a similar argument in the proof of Lemma \ref{Eintersectstorusboundary}. As mentioned previously, $G$ cannot have any bigonal complementary regions. This forces $n= 1$ or $n=2$. In the second case, there are three surfaces in $\bS'$ and  and all complementary regions on $T$ are triangles.  However, as mentioned previously, there can be no odd cycles in $G$, as cyclic order must be preserved between different vertices.
    
    So $n = 1$ and there are two surfaces in $\bS'$, each a punctured sphere. $T$ intersects each one in a collection of curves. Taking an innermost such curve on the sphere, we know it bounds a disk in $S^3$ intersecting $T$ only in its boundary. So it must either be a meridian or a longitude. Since the two sets of curves cannot be parallel, as otherwise they would generate a bigon, one set consists of meridians and the other of longitudes, and $T$ is an unknotted torus in $S^3$. If we take any square $Q$ complementary to $G$ on $T$, it is properly embedded in $\cT$ and $\partial Q$ crosses $E$ four times. It doubles to a sphere living in $D^{2n}(\cT) = D(\cT)$, which by hyperbolicity, bounds a ball. So to one side of $Q$ in $\cT$, there are no link arcs. Therefore, we can isotope a properly embedded arc in $Q$ to $E$. We can use this fact to isotope $T$ in $D^4(\cT)$ to lower the number of intersections with $E$, a contradiction to our choice of $T$.

    \sauceremph{The manifold $D^{2(n+1)}(\cT)$ has no essential annuli.}
    We now consider an essential annulus $A$ in $D^{2(n+1)}(\cT)$. 
    Lemma 1.16 of \cite{Hatcher}, shows that if a manifold is connected, orientable, irreducible and atoroidal, which we have shown our link complement to be, then the existence of an incompressible boundary-incompressible annulus implies the manifold is Seifert fibered. In \cite{BM}, a complete determination of link complements in $S^3$ that are Seifert fibered is given.  In particular, the only options that are nontrivial, prime, and that have no essential tori are a Hopf link, or a $(p,q)$-torus knot $K$, with annulus on the defining torus, and  possibly with one or both of the core curves of the solid tori to either side of the torus, denoted $\alpha_1$ and $\alpha_2$,  included as well. 
    
    There are no spheres in $S^3$ that become incompressible punctured spheres in the Hopf link complement.   So $L'$ is a torus knot with possibly two additional components as described.

    If neither core curve is present, so we are in a torus knot complement, then there are no incompressible punctured spheres to play the role of the surfaces in $\bS'$ (\cite {Tsau}). In the case that $\alpha_1$ or $\alpha_2$ are present, the $(p,q)$-torus knot $K$ lies on a torus $T$ defined by the annulus $A$, with a solid torus or a cored solid torus to each side.
    
   
     Since the fundamental groups of a solid torus and cored solid torus are $\mathbf{Z}$ or $\mathbf{Z} \times \mathbf{Z}$, and a planar surface has fundamental group a free product of $\mathbf{Z}$'s, the only incompressible sub-surfaces of our punctured sphere properly embedded in either one are a disk, a once-punctured disk whose puncture corresponds with a puncture of $\Sigma$, and an annulus whose boundaries are not punctures of $\Sigma$. But note that the boundaries of any of these surfaces can intersect the punctures on $\Sigma$ since they may cross the $(p,q)$-torus knot on $T$. Thus, an incompressible punctured sphere $\Sigma \subset D^{2(n+1)}(\cT)$ can only intersect each side in a disjoint union of disks, once-punctured disks, or annuli. 
     In the case of the annulus, both boundaries are on $T$, and the annulus is boundary-parallel to $T$.
    
    If there is a disk $D$ that has boundary trivial on $T$, then it cuts a ball $B$ from that side of $T$. If there are no further components of $\Sigma \setminus (\Sigma \cap T)$ inside the ball, then either $\partial D$ intersects $L'$ or it does not. If not, we can isotope $\Sigma$ to decrease the number of intersections with $T$. If it does, then we can isotope an arc of $L'$ to $D$ while fixing it endpoints, implying $\Sigma$ is boundary-compressible and hence compressible.
    
    If there  is another component of $\Sigma \setminus (\Sigma \cap T)$ in $B$, we can take an innermost such. It cannot be a punctured disk. If it is a disk, the argument just given eliminates it. If it is an annulus $A'$, that annulus must have boundaries trivial on $T$, and that annulus cuts a solid torus $V$ off of that side of $T$. If $\partial A'$ does not intersect $L'$, we can isotope $A'$ to eliminate the intersections with $T$. If it does intersect $L'$ then there is an arc of $L'$ that can be isotoped fixing its endpoints into $A'$, once again contradicting incompressibility of $\Sigma$. 

    A similar argument eliminates all annuli in $\Sigma \setminus (\Sigma \cap T)$ with boundary trivial in $T$ by taking an innermost such and applying the argument already given.

    By taking an innermost curve of intersection of $\Sigma \cap T$ on $\Sigma$, there is a disk or punctured disk to one side of $T$, and it must have meridianal boundary in the solid torus or cored solid torus to that side of $T$.
    
    Hence, to the other side, there must be an annulus $A'$ with boundaries that are longitudinal in the solid torus or cored solid torus to that side. Such an annulus splits the solid torus or cored solid tori into two solid tori, at most one of which can contain the possibly missing core. To that side, take an innermost such annulus $A''$. That annulus is boundary-parallel to $T$, and if its boundary misses $L'$, we can eliminate the intersections. If its boundaries intersect $L'$, then we can isotope an arc of $L'$ into $A'$, contradicting incompressibility of $\Sigma$.

\end{proof}
\begin{remark}
  Though Theorem \ref{Saucer hyperbolicity theorem} may extend to other special classes of pieces, it is not true for all pieces.
  For instance, the restriction that saucer tangles have at least two endpoints on each face is necessary. If we allowed a saucer tangle that had only one endpoint on a face, we could still choose the tangle so that it is 2-hyperbolic. But it will never be $2m$-hyperbolic for any $m > 1$ as it will not yield a prime knot or link.
  
  
  Another example where Theorem \ref{Saucer hyperbolicity theorem} fails to extend comes from Lemma \ref{Eintersectstorusboundary}.
  Given a $4$-hyperbolic piece $P$ whose $4$-replicant has an intersection locus intersecting a torus boundary component, its $2m$-replicant also has an intersection locus intersecting a torus boundary component.
  Hence Lemma \ref{Eintersectstorusboundary} implies that $P$ is not $2m$-hyperbolic for any $m > 2$.

\end{remark}

We gain the following useful corollary.
\begin{corollary}\label{Saucer composition corollary}
    Suppose $\{\cT_j\}$ is a collection of $n$ $2m$-hyperbolic saucer tangles such that the composition $\cT_i \circ \cT_{i+1}$ is defined for all $i$.
    Then, $\cT_1 \circ \cT_2 \circ \cdots \circ \cT_n$ is $2m$-hyperbolic.
\end{corollary}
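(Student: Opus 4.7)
The plan is to unpack the definition of the $2m$-replicant of the composition $\cT := \cT_1 \circ \cT_2 \circ \cdots \circ \cT_n$ and recognize it as a bracelet link on $2mn$ saucer tangles, so that Theorem \ref{Bracelet theorem} applies. Concretely, by definition $\Dsphere{2m}{\cT}$ is the link obtained from the length-$2m$ cyclic sequence $(\cT,\cT^R,\cT,\cT^R,\ldots,\cT^R)$, and since reflection commutes with composition up to reversing the order of factors, each copy of $\cT^R$ is the composition $\cT_n^R \circ \cdots \circ \cT_1^R$. Thus $\Dsphere{2m}{\cT}$ is naturally a bracelet link on the cycle of $2mn$ saucer tangles
\[
    \prn{\cT_1,\cT_2,\dots,\cT_n,\ \cT_n^R,\cT_{n-1}^R,\dots,\cT_1^R,\ \cT_1,\dots,\cT_1^R}.
\]

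The first step will be to verify that each constituent tangle in this cycle is $2mn$-hyperbolic. By hypothesis, each $\cT_j$ is $2m$-hyperbolic, so Theorem \ref{Saucer hyperbolicity theorem} immediately upgrades this to $2m'$-hyperbolicity for every $m' \geq m$; in particular, each $\cT_j$ is $2mn$-hyperbolic. For the reflected tangles $\cT_j^R$, one observes that $\Dsphere{2mn}{\cT_j^R}$ and $\Dsphere{2mn}{\cT_j}$ are diffeomorphic, since both are built from the same alternating pattern of $\cT_j$'s and $\cT_j^R$'s around the axis (the starting element of the cycle merely shifts by one position). Hence each $\cT_j^R$ is likewise $2mn$-hyperbolic.

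With every tangle in the length-$2mn$ cyclic sequence now known to be $2mn$-hyperbolic, Theorem \ref{Bracelet theorem} applies directly and yields that the bracelet link $\Dsphere{2m}{\cT}$ is hyperbolic. By the definition of $2m$-hyperbolicity for saucer tangles, this is precisely the statement that $\cT = \cT_1 \circ \cdots \circ \cT_n$ is $2m$-hyperbolic, completing the proof.

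The only delicate point is the bookkeeping in the first step: checking that the cyclic pattern of $\cT$ and $\cT^R$, when expanded, really does produce a legitimate bracelet decomposition on $2mn$ saucer tangles (so that the face-endpoint counts between consecutive tangles agree and the starburst axis is shared). This is guaranteed by the definition of composition of saucer tangles as endpoint attachment together with the defining property that $\cT^R$ swaps the roles of the two faces, so that inside each copy of $\cT^R$ the factors reverse order and the whole cyclic sequence closes up consistently with the common axis of the bracelet. Once this combinatorial check is in place, no further geometric work is required.
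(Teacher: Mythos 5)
Your proof is correct and follows essentially the same route as the paper: both arguments first upgrade each factor to $2mn$-hyperbolicity via Theorem \ref{Saucer hyperbolicity theorem}, then recognize the $2m$-replicant of the composition as a cycle of $2mn$ pieces along a common starburst axis and invoke the main theorem. The paper packages the second step as Corollary \ref{General composition corollary} while you phrase it as an application of Theorem \ref{Bracelet theorem} to the expanded bracelet decomposition, but these are the same application of Theorem \ref{Main theorem separating}.
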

\begin{proof}
    Theorem \ref{Saucer hyperbolicity theorem} implies that each $\cT_j$ is $2mn$-hyperbolic, hence Corollary \ref{General composition corollary} implies that the composition is $2m$-hyperbolic.
\end{proof}

The analogy to tangles in $T \times I$ carries to the following theorems.
\begin{theorem}\label{Bracelet theorem2}
    Suppose $L$ is a bracelet link made of a cycle $(\cT_i)_{i=1}^{n}$ of $n \geq 2m$ saucer tangles such that each $\cT_i$ is $2m$-hyperbolic.
    Then, $L$ is hyperbolic.
    If $n = 2m$, then the volume satisfies
    \[
        \vol(L) \geq \sum_i \volsphere{2n}{\cT_i}.
    \]\qed
\end{theorem}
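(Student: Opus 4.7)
The plan is to reduce this theorem directly to Theorem \ref{Bracelet theorem} by first upgrading the hyperbolicity exponent of each constituent tangle, using Theorem \ref{Saucer hyperbolicity theorem}.

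Since $L$ is a bracelet link, its cycle length must be even, so I would write $n = 2k$; the hypothesis $n \geq 2m$ then translates to $k \geq m$. The first step is to apply Theorem \ref{Saucer hyperbolicity theorem} to each tangle $\cT_i$: since $\cT_i$ is $2m$-hyperbolic and $k \geq m$, this theorem guarantees that $\cT_i$ is also $2k$-hyperbolic, i.e., $n$-hyperbolic. Thus the cycle $(\cT_i)_{i=1}^{n}$ becomes a cycle of $n = 2k$ saucer tangles each of which is $2k$-hyperbolic, which is precisely the setup required by Theorem \ref{Bracelet theorem}.

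Next, I would invoke Theorem \ref{Bracelet theorem} with its cycle-length parameter set to $2k = n$. This immediately yields that $L$ is hyperbolic, with
\[
    \vol(L) \geq \sum_{i=1}^{n} \volsphere{n}{\cT_i}.
\]
Specializing to $n = 2m$ (so $k = m$) recovers exactly the stated inequality, since then the $n$-volume is the $2m$-volume.

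There is no substantive obstacle: the entire content of the argument is absorbed into Theorem \ref{Saucer hyperbolicity theorem}, which provides the single hyperbolicity upgrade needed. The reason the displayed volume bound is stated only for $n = 2m$ (rather than for all $n \geq 2m$) is that when $n > 2m$, the $n$-volume of a $2m$-hyperbolic saucer tangle is in general strictly larger than its $2m$-volume — a monotonicity implicit in the cut-and-paste step used to prove Theorem \ref{Saucer hyperbolicity theorem} — so that the bound with $n$-volumes, while valid, does not correspond to the natural invariants one associates to $\cT_i$ viewed merely as a $2m$-hyperbolic tangle.
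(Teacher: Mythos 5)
Your proposal is correct and is exactly the argument the paper intends (the theorem is stated with an immediate \qed precisely because it follows by upgrading each $\cT_i$ from $2m$-hyperbolic to $n$-hyperbolic via Theorem \ref{Saucer hyperbolicity theorem} and then invoking Theorem \ref{Bracelet theorem}). Your closing remark about why the volume bound is stated only for $n=2m$ is reasonable commentary but not needed for the proof.
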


\begin{theorem}\label{tetrahedral hyperbolicity theorem}
    Suppose $\cT$ is a tetrahedral tangle that is $(2m, 2n)$-hyperbolic for some $m, n \geq 1$.
    Then, $\cT$ is $(2r, 2s)$-hyperbolic for all $r \geq m$ and $s \geq n$.
\end{theorem}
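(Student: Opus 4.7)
The proposal is to deduce this from Theorem \ref{Saucer hyperbolicity theorem} by applying it successively in each of the two ``directions.'' A tetrahedral tangle $\cT$ is naturally a $2$-piece $(\cT,(V^1,V^2,V^3,V^4))$, where the pairs $(V^1,V^2)$ and $(V^3,V^4)$ correspond to the two pairs of tetrahedron-faces sharing the ``inside'' and ``outside'' opposite edges respectively, in the notation following Figure \ref{toruslattice}. By Proposition \ref{Replication commuting proposition}, one can realize $D^{(2m,2n)}(\cT)$ as the $2n$-replicant of the $1$-piece $\cT' := D^{2m}(\cT,(V^1,V^2))$ (with distinguished faces $D^{2m}(V^3),D^{2m}(V^4)$), or equivalently as the $2m$-replicant of $\cT'' := D^{2n}(\cT,(V^3,V^4))$, and this flexibility is what drives the argument.

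First I would verify that $\cT'$ is in fact a saucer tangle in the sense of Definition \ref{Bracelet tangle definition}, with its axis being the inside-edge $V^1 \cap V^2$ closed up into a circle. Geometrically, $2m$ tetrahedra glued cyclically around a common opposite-edge form a ball, the two families $\{V^3\}$ and $\{V^4\}$ glue to two disks meeting along the circle formed by the $2m$ outside-edges, and the assumption that at least one endpoint of $\cT$ lies on each edge of the cross-sectional square ensures that $\cT'$ has at least two endpoints on each face of the saucer. Under this identification, the hypothesis of $(2m,2n)$-hyperbolicity is exactly the statement that $\cT'$ is a $2n$-hyperbolic saucer tangle, so Theorem \ref{Saucer hyperbolicity theorem} gives that $\cT'$ is $2s$-hyperbolic for every $s \geq n$; unraveling the identification via Proposition \ref{Replication commuting proposition} yields that $\cT$ is $(2m,2s)$-hyperbolic for every $s \geq n$.

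To finish, fix any such $s$ and reverse the roles of the two pairs of surfaces. By the same argument, the $1$-piece $\cT''_s := D^{2s}(\cT,(V^3,V^4))$ is a saucer tangle, and $(2m,2s)$-hyperbolicity of $\cT$ is equivalent to $2m$-hyperbolicity of $\cT''_s$. A second application of Theorem \ref{Saucer hyperbolicity theorem}, now applied to $\cT''_s$, gives $2r$-hyperbolicity for all $r \geq m$, which translates via Proposition \ref{Replication commuting proposition} to $(2r,2s)$-hyperbolicity of $\cT$. Combining the two steps yields the conclusion for every pair $(r,s)$ with $r \geq m$ and $s \geq n$.

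The main technical obstacle is the verification in the first step that $\cT'$ is genuinely a saucer tangle in the sense required by Theorem \ref{Saucer hyperbolicity theorem}, namely a tangle in the saucer piece of Figure \ref{saucer} with at least two endpoints on each face. This is essentially a combinatorial check on the face-gluing pattern of $2m$ tetrahedra around an opposite-edge and on endpoint counts along each edge of the square; once it is in place, both appeals to Theorem \ref{Saucer hyperbolicity theorem} go through verbatim, and no new geometric input is needed.
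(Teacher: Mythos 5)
Your proposal is correct and follows essentially the same route as the paper: regroup the $(2m,2n)$-replicant as the $2n$-replicant of an intermediate saucer tangle built from $2m$ alternating copies of $\cT$ and $\cT^R$, apply Theorem \ref{Saucer hyperbolicity theorem}, then swap the roles of the two directions. The paper phrases the regrouping directly as a decomposition of the lattice into composite saucer tangles rather than invoking Proposition \ref{Replication commuting proposition} and the $\ell$-piece formalism, but the underlying argument — including the endpoint count that makes the intermediate piece a legitimate saucer tangle — is the same.
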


\begin{proof} The $(2m, 2n)$-replicant of $\cT$ can be decomposed into $2m$ saucer tangles, each consisting of $n$ copies of $\cT$ and $n$ copies of its reflection. But then the fact the $(2m, 2n)$-replicant of $\cT$ is hyperbolic is equivalent to the fact the $2n$-replicant of the saucer tangle is hyperbolic. But then we know the $2s$-replicant of the saucer tangle is hyperbolic by the theorem, so $\cT$ is  $(2m, 2s)$-hyperbolic. We can repeat the process, only this time decomposing into $2s$ saucer tangles, each consisting of $m$ copies of $\cT$ and $m$ copies of its reflection. Again the theorem implies that this saucer tangle is $2r$-hyperbolic, implying that $\cT$ is $(2r, 2s)$-hyperbolic.  
 
\end{proof}

\section{Explicit Tangles} \label{explicittangles}
In the case of bracelet links, $2$-hyperbolicity of some classes of saucer tangles have been previously studied. We consider one such class here.

\begin{figure}[htbp]
    \centering
    \includegraphics[width=.8\textwidth]{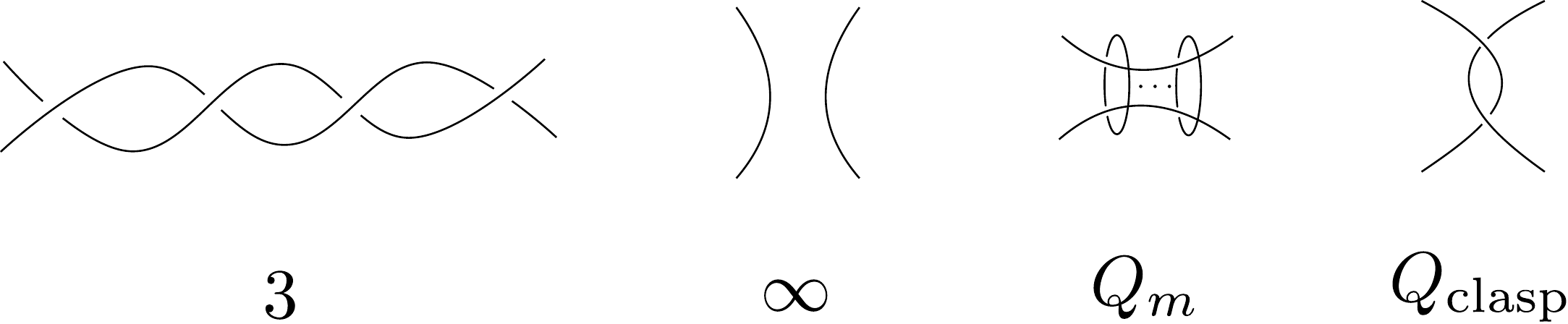}
    \caption{Pictured are some particular examples of arborescent tangles.
    The integer tangles are defined analogously to 3, with tangle $m$ having $m$ crossings.
    The tangle $Q_m$ has $m$ ``loops.''
    }
    \label{Arborescent examples figure}
\end{figure}
\begin{figure}[htbp]
    \centering
    \includegraphics[width=.8\textwidth]{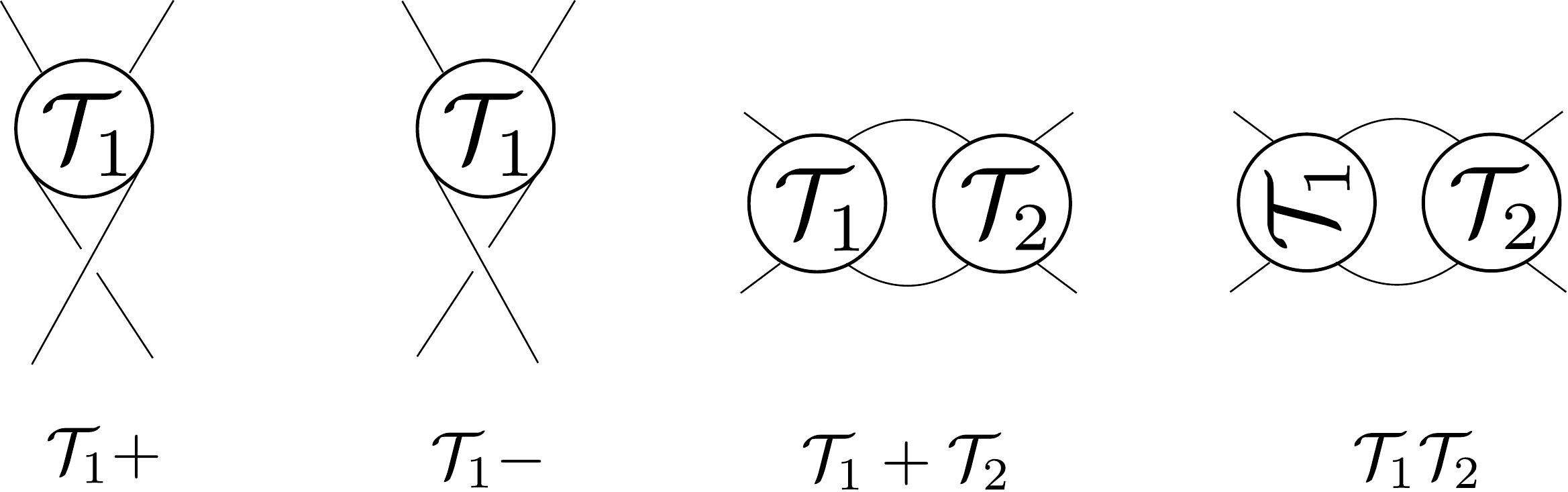}
    \caption{Pictured are the four operations defined by \cite{Conway} to generate arborescent tangles.
    We realize arborescent tangles as saucer tangles with the endpoints on the far left or far right, and hence composition of arborescent tangles corresponds with the sum operation.}
    \label{Arborescent operations figure}
\end{figure}

\subsection{Arborescent tangles}\label{Arborescent tangles subsection}
Rational tangles, as defined by Conway in \cite{Conway} are examples of saucer tangles. We can think of a rational tangle as constructed by starting with either the 0 tangle corresponding to two horizontal strands or the $\infty$ tangle corresponding to two vertical strands.Then we alternately twist the right two endpoints and the  the bottom two endpoints to create a link in the 3-ball. Note that when we start, there is a disk $D$ known as the \emph{defining disk} of the rational tangle, that is properly embedded in the ball and that separates the two strands. It continues to exist as we twist the endpoints, but its boundary becomes more complicated on the boundary of the ball. 

Given a rational tangle in a ball, we can choose  a disk $D'$ on the boundary of the ball that contains two of the endpoints of the tangle, called a \emph{composition disk}. Then given two such rational tangles with choices of disks on the boundary, we can compose the tangles by gluing the one disk to the other.

 A Montesinos tangle is obtained by composing rational tangles in a row, left to right, our composition disks either being the leftside disk or the rightside disk. Arborescent tangle are defined to include rational tangles and the compositions of  arborescent tangles along any composition disks.

We use previous results to determine the $2n$-hyperbolicity of all arborescent tangles.

Let $Q_m$ be the arborescent tangles defined in Figure \ref{Arborescent examples figure}.
We say that an arborescent tangle $\cT$ \emph{contains $Q_m$} if there is a 4-times punctured sphere in $D \times I \setminus \cT$, called a \emph{Conway sphere}, cutting $\cT$ into two tangles, one of which is $Q_m$.
We say that a link contains $Q_m$ if it possesses such a Conway sphere.

The following Theorem was proved in \cite[Thm. 3.22]{Volz}.
It is related to a theorem originally in an unpublished manuscript by Bonahon and Siebenmann (see \cite{Bonahon}), also proved in \cite{Volz} and \cite{Futer} by essentially different means.
\begin{theorem}[\cite{Volz}]\label{Arborescent theorem}
    Suppose $\cT$ is an arborescent tangle.
    Then, $\cT$ is $2$-hyperbolic if and only if none of the following hold:
    \begin{enumerate}
        \item $\cT$ is a rational tangle.
        \item $\cT = Q_m \circ \cT'$ or $\cT = \cT' \circ Q_m$ for some $m \geq 1$ and some arborescent tangle $\cT'$.
        \item $\cT$ contains $Q_m$ for some $m \geq 2$.
    \end{enumerate}\qed
\end{theorem}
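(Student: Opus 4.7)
The plan is to reduce $2$-hyperbolicity of an arborescent tangle to hyperbolicity of the associated doubled arborescent \emph{link}, and then invoke the Bonahon--Siebenmann classification of hyperbolic arborescent links. First I would verify that if $\cT$ is an arborescent tangle, then its $2$-replicant $\Dsphere{2}{\cT}$, obtained by gluing $\cT$ to its reflected copy $\cT^R$ across both faces of the saucer, is an arborescent link in $S^3$. This is because doubling respects Conway plumbing: the defining weighted tree of $\Dsphere{2}{\cT}$ is obtained by a symmetric join of two copies of the tree of $\cT$, and each of Conway's four tangle operations (sum, product, rotation, ramification) interacts predictably with reflection.

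Next I would appeal to the Bonahon--Siebenmann classification of hyperbolic arborescent links. It says that an arborescent link fails to be hyperbolic precisely when its Conway-sphere decomposition exhibits one of the following: a non-trivial Conway sphere yielding a reducing or boundary-reducing sphere, a pair of parallel Conway spheres enclosing a $Q_m$ building block (producing an essential torus), or a combinatorial pattern forcing Seifert-fibered structure on the complement (the rational and Montesinos cases).

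I would then match each of the three forbidden conditions in the statement with one of these obstructions applied to $\Dsphere{2}{\cT}$. If $\cT$ is rational, then $\Dsphere{2}{\cT}$ is a $2$-bridge link whose complement is either Seifert-fibered or contains an essential sphere coming from the doubled defining disk of the rational tangle; in either case it is not hyperbolic. If $\cT = Q_m \circ \cT'$ or $\cT' \circ Q_m$, then reflecting $\cT$ across the face opposite the $Q_m$ summand embeds a pair of Conway spheres in $\Dsphere{2}{\cT}$ that cobound an essential annulus running between meridians of the link components contributed by $Q_m$. If $\cT$ contains $Q_m$ for $m \geq 2$, then the Conway sphere cutting off $Q_m$ in $\cT$ has a mirror image under reflection, and the two together bound a regular neighborhood of $Q_m \sqcup Q_m^R$ whose frontier is an incompressible, non-boundary-parallel torus.

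Conversely, when none of the three conditions hold, the canonical Conway-sphere decomposition of $\cT$ (and hence of $\Dsphere{2}{\cT}$) has all building blocks rational, no outermost $Q_m$ summands, and no internal $Q_m$ of multiplicity $m \geq 2$; this is exactly the residual case of the Bonahon--Siebenmann classification in which the complement is hyperbolic. The main obstacle is this converse direction: I expect the bulk of the work to lie in verifying that no essential sphere, disk, annulus, or torus exists in $\Dsphere{2}{\cT}$ outside of the three exceptional cases, and that the complement is not Seifert-fibered. This requires a JSJ analysis of arborescent link complements controlled by the Conway-sphere tree, together with a careful tracking of how essential surfaces in the double must descend to essential surfaces or obstructions in $\cT$ itself, organized exactly as in the Bonahon--Siebenmann argument and its refinement in \cite{Volz}.
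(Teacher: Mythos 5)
The paper does not prove this theorem at all: it is stated with a \verb|\qed| and attributed to \cite[Thm.\ 3.22]{Volz}, with the remark that it descends from the unpublished Bonahon--Siebenmann classification and is also proved in \cite{Futer}. So there is no in-paper argument to compare against; the only question is whether your proposal would stand on its own as a proof, and as written it would not. Your strategy --- double the tangle, observe that the double is arborescent, and invoke the classification of hyperbolic arborescent links --- is indeed the standard route, and your forward direction is mostly sound: for a rational tangle the defining disk (which is disjoint from the strands) doubles to an essential separating sphere, which is exactly the observation the paper itself makes immediately after the theorem; and the $Q_m$ conditions do produce essential tori/annuli in the double, essentially because reflection juxtaposes $Q_m$ with $Q_m^R$ to create $Q_{2m}$-type configurations.

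The genuine gap is the converse. You acknowledge that ``the bulk of the work'' lies in showing that when none of conditions (1)--(3) hold, $\Dsphere{2}{\cT}$ contains no essential sphere, disk, annulus, or torus and is not Seifert fibered --- but you then discharge this by appealing to ``the Bonahon--Siebenmann classification and its refinement in \cite{Volz},'' which is precisely the theorem being proved (restricted to symmetric doubles). That makes the argument circular rather than incomplete in a fixable way: one must either (a) carry out the JSJ/Conway-sphere analysis of the double directly, tracking how an essential surface in $\Dsphere{2}{\cT}$ can be isotoped to respect the reflection surface and hence descends to an obstruction inside $\cT$ (this is the content of \cite{Volz}), or (b) cite the classification as a black box, which is what the paper does. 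A smaller inaccuracy: the double of a rational saucer tangle is not naturally a $2$-bridge link (whose complements are generically hyperbolic); the correct and simpler obstruction is the doubled defining disk, i.e.\ an essential sphere. If you intend this as a citation-level justification, say so explicitly; if you intend it as a proof, the converse direction must be supplied.
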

 
 Note that a rational tangle $\cT$ can never be 2-hyperbolic since the defining disk doubles to a separating sphere for the resulting link.

We will use this theorem and Corollary \ref{General composition corollary} to determine $2n$-hyperbolicity.
In order to state the result, we define a saucer tangle $\cT$ to be \emph{principally $2n$-hyperbolic} if it is $2n$ hyperbolic and either $n=1$ or $\cT$ is not $2(n-1)$-hyperbolic.
We say that a saucer tangle is \emph{entirely non-hyperbolic} if it is not $2n$-hyperbolic for any $n$.
\begin{corollary}\label{Arborescent corollary}
    Suppose $\cT$ is an arborescent tangle.
    Then, $\cT$ is entirely non-hyperbolic if and only if one of the following hold:
    \begin{enumerate}
        \item $\cT$ is an integer tangle.
        \item $\cT = Q_m \circ \cT'$ or $\cT = \cT'  \circ Q_m$ for some $m \geq 1$.
        \item $\cT$ contains $Q_m$ for some $m \geq 2$.
    \end{enumerate}
    Further, $\cT$ is principally 2-hyperbolic if and only if none of these hold and $\cT$ is non-rational.
    Moreover, $\cT$ is principally 4-hyperbolic if and only if it is a non-integer rational tangle other than $Q_{\text{clasp}}.$
    Last, $\cT$ is principally 6-hyperbolic if and only if it is equal to $Q_{\text{clasp}}.$
\end{corollary}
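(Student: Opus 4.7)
The plan is to combine Theorem \ref{Arborescent theorem} of Volz with the monotonicity of Theorem \ref{Saucer hyperbolicity theorem}: once $\cT$ is $2n$-hyperbolic it is $2m$-hyperbolic for every $m \geq n$, so it suffices to determine, for each arborescent $\cT$, the minimal $n$ (if any) at which $\cT$ is $2n$-hyperbolic. The corollary then amounts to moving non-integer rational tangles out of Volz's ``obstructed'' list by showing they become hyperbolic at level $4$ (or $6$ for the single exception $Q_{\text{clasp}}$).

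First I would establish the forward direction, showing that each of (1)--(3) forces $\cT$ to be entirely non-hyperbolic. For an integer tangle $k$, the $2n$-replicant alternates $k$-twist with reflected $(-k)$-twist around the bracelet, and the adjacent twist regions cancel in pairs to yield a $(2,\cdot)$-torus link (or a split union of unknots in degenerate cases), which is never hyperbolic. For condition (2), if $\cT = Q_m \circ \cT'$ then the Conway sphere separating the $Q_m$ summand from $\cT'$ replicates in $D^{2n}(\cT)$ to bound a cabled neighborhood of a ring of $Q_m$-copies; its boundary is an essential torus obstructing hyperbolicity at every level $n$. Condition (3) is handled analogously, using the Conway sphere realizing $\cT \supset Q_m$ for $m \geq 2$.

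For the reverse direction, assume none of (1)--(3) hold and split into three subcases. If $\cT$ is not rational, then Theorem \ref{Arborescent theorem} gives $2$-hyperbolicity directly, hence principal $2$-hyperbolicity. If $\cT$ is a non-integer rational tangle other than $Q_{\text{clasp}}$, then Theorem \ref{Arborescent theorem} rules out $2$-hyperbolicity, and I would establish $4$-hyperbolicity by identifying $D^4(\cT)$ as a length-$4$ Montesinos bracelet link with rational summands $(\cT, \cT^R, \cT, \cT^R)$, and invoking the Bonahon--Siebenmann--Oertel classification of hyperbolic Montesinos links to conclude hyperbolicity for every non-integer slope $p/q$ outside the excluded $Q_{\text{clasp}}$ slope. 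For $\cT = Q_{\text{clasp}}$, I would verify directly (for instance via SnapPy) that $D^4(Q_{\text{clasp}})$ admits a Seifert fibration while $D^6(Q_{\text{clasp}})$ is hyperbolic, giving principal $6$-hyperbolicity.

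The main obstacle is the uniform verification of $4$-hyperbolicity for arbitrary non-integer rationals: since their slopes $p/q$ form an infinite family, one cannot enumerate them, and instead must invoke the Montesinos classification together with a careful slope computation inside the $2n$-replicant to isolate precisely which slope yields the degenerate Seifert-fibered configuration associated with $Q_{\text{clasp}}$. A secondary care point arises in (2) and (3), where one must verify that the doubled Conway sphere descends to an \emph{essential} (neither compressible nor boundary-parallel) torus in every replicant; this uses the genuine cabled structure of $Q_m$ and fails without the hypothesis $m \geq 2$ or the presence of a non-trivial partner tangle $\cT'$.
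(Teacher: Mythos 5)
Your overall strategy---pin down, for each arborescent tangle, the minimal level of hyperbolicity using Volz's Theorem \ref{Arborescent theorem} together with the monotonicity of Theorem \ref{Saucer hyperbolicity theorem}---matches the paper's, but the mechanism at each step is genuinely different. The paper's proof is essentially tangle-sum algebra: since $D^{2n}(\cT)$ coincides with $D^{2}$ of the $n$-fold composition $\cT \circ \cT^R \circ \cdots \circ \widetilde\cT$ (equivalently, by the contrapositive of Corollary \ref{General composition corollary}), $2n$-hyperbolicity of $\cT$ reduces to $2$-hyperbolicity of that composition, to which Theorem \ref{Arborescent theorem} applies directly. The forward direction then follows from the facts that a sum of integer tangles is integer (hence rational), and that $Q_m \circ Q_m^R = Q_{2m}$, so conditions (2) and (3) propagate to the composition; the $4$-hyperbolicity of non-integer rationals follows because $\cT \circ \cT^R$ is a non-rational arborescent tangle with no closed components and is therefore $2$-hyperbolic by Volz. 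You instead construct the obstructions by hand (unlinks/torus links, essential tori) and invoke the Bonahon--Siebenmann--Oertel classification for the positive direction. Both routes are viable, and yours has the merit of making the $Q_{\text{clasp}}$ exception visible as the degenerate toroidal/Seifert-fibered Montesinos configuration, which the paper disposes of only by citation.

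That said, two steps of your version need genuine repair. First, your obstruction for conditions (2) and (3) is misstated: a replicated Conway sphere does not ``have boundary an essential torus,'' and for $m=1$ a single copy of $Q_1$ carries no essential torus on its own. What you need is that in $D^{2n}(\cT)$ each copy of $Q_m$ sitting at a composition face meets its mirror $Q_m^R$ in the adjacent saucer to form $Q_{2m}$; the $\geq 2$ parallel circle components of $Q_{2m}$ cobound an annulus whose regular neighborhood has boundary the essential torus, and one must still verify incompressibility to both sides in the full replicant. The copies of $Q_m$ do not form a single ring---they pair off into $n$ separate $Q_{2m}$'s separated by copies of $\cT'$. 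Second, the Montesinos step is the crux and is only sketched: you must compute the reduced form $K(e;\beta_1/\alpha_1,\dots,\beta_4/\alpha_4)$ of the bracelet $(\cT,\cT^R,\cT,\cT^R)$ for an arbitrary non-integer slope $p/q$ and check that only $p/q=\pm 1/2$, with the particular twist parameter produced by reflection doubling, lands in the exceptional non-hyperbolic family; note that a slope such as $3/2$ also reduces to $1/2 \bmod 1$, so the exclusion cannot be read off denominators alone. The paper's composition argument sidesteps both issues, at the cost of leaving the breakdown of its own argument at $Q_{\text{clasp}}$ to the external reference.
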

We use the fact (see  \cite{Kauffman_classification}), that a sum of two rational tangles is rational if and only if one of the two is an integer tangle.
\begin{proof}[Proof of Corollary \ref{Arborescent corollary}]
        We first assume that $\cT$ is an arborescent tangle satisfying one of (1)-(3), and we show that $\cT \circ \cT^R \cdots \widetilde \cT$ satisfies one of the conditions of Theorem \eqref{Arborescent theorem}, where $\widetilde \cT$ stands for either $\cT$ or $\cT^R$ depending on the parity.
    
    Suppose that $\cT$ is an integer tangle.
    Then, by the preceding discussion, we see inductively that 
    \[
        \cT \circ \cT^R \cdots \widetilde \cT =  \cT \circ \prn{\cT^R \cdots \widetilde \cT} 
    \]
    is a rational tangle (in fact an integer tangle).
    Hence Theorem \ref{Arborescent theorem} implies that no such tangles are 2-hyperbolic, so \ref{General composition corollary} implies that $\cT$ is entirely non-hyperbolic.
    
    Suppose instead that $\cT = Q_m \circ \cT'$  or $  \cT' \circ Q_m$ for some $m \geq 1$.
    Then, $\cT^R \circ \cT \cdots \widetilde \cT$ contains $Q_{2m}$, so we may assume that condition (3) holds.
    In fact, if condition (3) holds for $\cT$, then it holds for $\cT \circ \cT^R \cdots \widetilde \cT$, so Theorem \ref{Arborescent theorem} and Corollary \ref{General composition corollary} together imply that $\cT$ is entirely non-hyperbolic.
    
    The corollary for principal 2-hyperbolicity is a restatement of Theorem \ref{Arborescent theorem}.
    For 4-hyperbolicity, note that $\cT$ is not an integer tangle, so $\cT^R$ is also not an integer tangle.
    Hence $\cT \circ \cT^R$ is not a rational tangle.
    
    Since $Q_m$ contains more than two components, any rational tangle satisfies conditions (2) and (3).
    Further, conditions (2), (3)  together imply that $\cT$ neither contains $Q_m$ for $m \geq 2$ nor attains equality with $Q_m \circ T$ or $T \circ Q_m$ for $m \geq 1$.
    Hence $\cT \circ \cT^R$ is 2-hyperbolic and $\cT$ is not 2-hyperbolic.
    Hence $\cT$ is principally 4-hyperbolic.
    
    The corollary is well-known for $Q_{\text{clasp}}$ (see for instance \cite{NR}).
\end{proof}


\subsection{Alternating tangles} \label{alternatingtangles} 
We now explore a class of 2-hyperbolic solid cylindrical tangles related to alternating saucer tangles.


\begin{definition}
A saucer tangle $\cT$ is an \emph{alternating tangle} if it possesses a projection to a cross-sectional bigon of the saucer  such that the crossings alternate along every strand.
\end{definition}


Since they are topologically indistinguishable, a saucer tangle $\cT$ determines a canonical solid-cylinder tangle $\cT_{C}$.
We show the following:
\begin{proposition}\label{Hyperbolic alternating tangle proposition}
    Suppose $\cT$ is a $2m$-hyperbolic alternating saucer tangle which intersects each end at two points.
    Then, $\cT_{C}$ is a 2-hyperbolic solid-cylinder tangle with volume satisfying
    \[
        \volsolid{2}{\cT_C} >  \volsphere{2m}{\cT}.
    \]
\end{proposition}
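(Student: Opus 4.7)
The plan is to relate the two link complements via a cyclic covering map arising from the rotational symmetry of the bracelet. Let $\alpha \subset S^3$ denote the central axis of the $2m$-replicant bracelet, so $\alpha$ is an unknot disjoint from $D^{2m}(\cT)$, and $\tilde V := S^3 \setminus \alpha$ is an open solid torus containing $D^{2m}(\cT)$. There is a free action of $\ZZ/m$ on $\tilde V$ by rotations of $2\pi/m$ about $\alpha$ which preserves $D^{2m}(\cT)$ by advancing each tangle two places in the cyclic arrangement. The quotient $V := \tilde V/(\ZZ/m)$ is again an open solid torus, and the image of $D^{2m}(\cT)$ in $V$ is precisely $D^2(\cT_C)$ under the canonical identification of a saucer with a solid cylinder. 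This yields a natural $m$-fold cyclic covering
\[
    S^3 \setminus (D^{2m}(\cT) \cup \alpha) \longrightarrow V \setminus D^2(\cT_C).
\]

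Next I would prove that the augmented complement $S^3 \setminus (D^{2m}(\cT) \cup \alpha)$ admits a finite-volume hyperbolic structure whose volume strictly exceeds $\vol\prn{S^3 \setminus D^{2m}(\cT)} = 2m \cdot \volsphere{2m}{\cT}$. By $2m$-hyperbolicity, $S^3 \setminus D^{2m}(\cT)$ is hyperbolic, and it carries a topological order-$2m$ rotational symmetry about $\alpha$ that permutes the tangles cyclically. Mostow--Prasad rigidity realizes this symmetry as an isometric action, so by Theorem \ref{fixed surface lemma} its fixed set is totally geodesic; since this fixed set contains the simple closed curve $\alpha$, the curve $\alpha$ is a closed geodesic of the hyperbolic structure on $S^3 \setminus D^{2m}(\cT)$. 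The drilling theorem for closed geodesics then endows $S^3 \setminus (D^{2m}(\cT) \cup \alpha)$ with a finite-volume hyperbolic structure, and drilling a closed geodesic strictly increases hyperbolic volume, yielding
\[
    \vol\prn{S^3 \setminus (D^{2m}(\cT) \cup \alpha)} > 2m \cdot \volsphere{2m}{\cT}.
\]

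Finally, I would transfer the hyperbolic structure through the cover. The $\ZZ/m$-action is free on $S^3 \setminus (D^{2m}(\cT) \cup \alpha)$ since the fixed set $\alpha$ has been removed, and the quotient is $V \setminus D^2(\cT_C)$. A free finite quotient of a hyperbolic manifold by isometries is again hyperbolic, so $\cT_C$ is $2$-hyperbolic. Covering maps scale volume by degree, so
\[
    \vol\prn{V \setminus D^2(\cT_C)} = \frac{1}{m} \vol\prn{S^3 \setminus (D^{2m}(\cT) \cup \alpha)} > 2 \cdot \volsphere{2m}{\cT},
\]
and dividing by $2$ gives $\volsolid{2}{\cT_C} > \volsphere{2m}{\cT}$.

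The main obstacle will be the strict drilling step: one must rule out the possibility that $\alpha$ is peripheral in $S^3 \setminus D^{2m}(\cT)$ --- that is, parallel to a link component or freely homotopic into a cusp --- in which case drilling would act degenerately and the augmented complement would fail to be hyperbolic. The alternating hypothesis on $\cT$ and the requirement that $\cT$ meet each face in two points enter here, ensuring that $\alpha$ links the bracelet essentially and is not parallel to any component of $D^{2m}(\cT)$, so that the drilled manifold is genuinely hyperbolic of strictly larger volume.
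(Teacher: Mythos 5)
Your proof takes a genuinely different route from the paper's. The paper never drills the axis out of $D^{2m}(\cT)$ directly; instead it forms the \emph{modified} replicant $D^{2m}_{\text{mod}}(\cT)$ by switching all crossings in alternate tangles so that the resulting bracelet link is alternating, invokes the hyperbolicity of generalized augmented alternating links to adjoin the axis, identifies the augmented complement with the cyclic arrangement $(\cT_C,\widehat\cT_C^R,\dots)$ in the solid torus, and then cuts and reglues along the totally geodesic twice-punctured meridianal disks (thrice-punctured spheres) to isolate $\cT_C$. This is exactly where the alternating hypothesis does real work in the paper. Your route --- realize the axis $\alpha$ as a closed geodesic, drill it for a strict volume increase, then pass to the quotient by the order-$m$ rotation --- bypasses the alternating machinery entirely, and if completed would actually prove the statement without the alternating hypothesis.

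That said, as written there are two concrete gaps. First, the ``topological order-$2m$ rotational symmetry about $\alpha$ that permutes the tangles cyclically'' does not exist: rotation by $\pi/m$ carries the saucer containing $\cT$ onto the saucer containing the mirror tangle $\cT^R$, which is not $\cT$ in general. Only the order-$m$ rotation (by two saucers) preserves $D^{2m}(\cT)$, and this is the action you in fact use for the covering. The geodesic claim should therefore be justified differently: for $m\geq 2$, the reflections in the decomposing spheres are realized by isometries with totally geodesic fixed surfaces $\Sigma_i$, and $\alpha=\Sigma_i\cap\Sigma_j$ is the intersection of two totally geodesic surfaces, hence a geodesic. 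Second, the case $m=1$ is not covered by any rotational symmetry at all (there is a single sphere, and the composite of its reflection with itself is trivial); here you must instead argue that $\alpha$ is an essential, non--puncture-parallel separating curve on the totally geodesic $4$-punctured sphere $\Sigma_1$ (this is where the two-endpoints-per-face hypothesis enters), hence isotopic within $\Sigma_1$, and so within the ambient manifold, to a closed geodesic; the covering step is then vacuous and drilling alone gives the inequality. With those repairs, and with citations for the facts that finite group actions on finite-volume hyperbolic $3$-manifolds are conjugate to isometric ones and that the complement of a closed geodesic is hyperbolic of strictly larger volume, your argument goes through; your closing paragraph's worry about peripherality is then automatically resolved, since a curve isotopic to a closed geodesic cannot be parallel into a cusp.
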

We use the following construction.
\begin{definition}
    A saucer tangle $\cT$ determines another saucer tangle $\widehat \cT$ with a projection identical to that of $\cT$ except that all crossings are switched between under-crossings and over-crossings.
    Using this, the length $2m$ cycle $\prn{\cT,\widehat \cT^R,\dots,\widehat \cT^R}$ determines an alternating bracelet link $D^{2m}_{\text{mod}}(\cT)$, called the \emph{modified $2m$-replicant}.
\end{definition}
 If $\cT$ is $2m$-hyperbolic, then $\widehat \cT$ is as well, and by Theorem \ref{Bracelet theorem}, the link$D^{2m}_{\text{mod}}\prn{\cT}$ is also hyperbolic, with volume satisfying 
    \[
        \vol\prn{D^{2m}_{\text{mod}}{\cT}} > 2m \volsphere{2m}{\cT}
    \]

\begin{proof}[Proof of Proposition \ref{Hyperbolic alternating tangle proposition}]
    The link $D^{2m}_{\text{mod}}(\cT)$ is alternating by construction. Since it is hyperbolic, it must be nonsplit, prime  and it must not be a torus link.
    
    Using this, we may construct a \emph{generalized augmented alternating link} by adding in a single unknotted component along the central axis of the bracelet link $D^{2m}_{\text{mod}}(\cT)$.
    By \cite{Adams_generalized_augmented}, this link is hyperbolic, and by \cite[Theorem 3.4(c)]{Thurston_kleinian} , its volume is greater than the volume of $D^{2m}_{\text{mod}}(\cT)$.
    In fact, the complement of this link is homeomorphic to that of the link in the open solid torus determined by $\prn{\cT_C,\widehat \cT^R_C,\dots,\widehat \cT^R_C}$.
    Call this link $D^{2m}_{\text{mod}}(\cT_C)$.
    
    There is a collection of disjoint twice-punctured meridianal disks in the open solid torus which separate the copies of $\cT_C$ and $\widehat \cT^R_C$ in $D^{2m}_{\text{mod}}(\cT_C)$ from the rest of the link.
    By \cite{Adams_thrice_punctured}, these twice-punctured disks are totally geodesic, and cutting and then regluing along any subset of them will yield another hyperbolic manifold. In particular,  $\cT_C$ is 2-hyperbolic.
    Since $\volsolid{2}{\cT_C} = \volsolid{2}{\widehat \cT_C^R}$, Theorem \ref{Solid torus theorem} implies that $\vol\prn {D^{2m}_{\text{mod}}(\cT_C)} = 2m \volsolid{2}{\cT_C}$.
    We may summarize these inequalities as follows:
    \begin{align*}
        \volsphere{2m}{\cT}
        &= \frac{\vol\prn{D^{2m}(\cT)}}{2m} \\
        &\leq \frac{\vol\prn{D^{2m}_{\text{mod}}(\cT)}}{2m}\\
        & < \frac{\vol\prn{D^{2m}_{\text{mod}}(\cT_C)}}{2m}\\
        &= \volsolid{2}{\cT_C}.
    \end{align*}
\end{proof}

Note that because the twice-punctured meridianal disks are totally geodesic, the link $J$ obtained by taking a single copy of $\cT$, and gluing its left endpoints to its right endpoints (generating a ``bracelet link'' of length 1) and then removing a single unknotted component corresponding to the axis of the bracelet link is hyperbolic and its volume is equal to $\volsolid{2}{\cT_C}$. A  corollary applies to Montesinos tangles. 

\begin{corollary}
    Let $\cT_C$ be a non-integer Montesinos tangle in a solid cylinder.
    Then $\cT_C$ is 2-hyperbolic.
\end{corollary}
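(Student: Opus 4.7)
The plan is to derive this corollary as a direct application of Proposition \ref{Hyperbolic alternating tangle proposition} combined with the classification given in Corollary \ref{Arborescent corollary}. First I would unpack what a Montesinos tangle looks like as a saucer tangle $\cT$: by construction it is a composition $R_1 \circ R_2 \circ \cdots \circ R_k$ of rational tangles glued along side composition disks each containing two endpoints, so $\cT$ is arborescent and intersects each end of its saucer at exactly two points. I would then choose the standard alternating projection of each $R_i$ and argue (using the classical fact that a composition of rational tangles with compatibly chosen crossing signs is alternating) that $\cT$ admits an alternating projection to the cross-sectional bigon of its saucer.

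Next I would verify that $\cT$ is $2m$-hyperbolic for some $m$. This is exactly the content of Corollary \ref{Arborescent corollary}: since $\cT$ is assumed non-integer, condition (1) there fails; since each $R_i$ is rational, $\cT$ cannot equal $Q_m \circ \cT'$ or $\cT' \circ Q_m$ for any $m \geq 1$, and $\cT$ contains no Conway sphere exhibiting a $Q_m$ subtangle with $m \geq 2$, so conditions (2) and (3) also fail. Hence $\cT$ is not entirely non-hyperbolic, so one of principal 2-, 4-, or 6-hyperbolicity holds. In particular, $\cT$ is $2m$-hyperbolic for some $m \geq 1$.

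Now I would apply Proposition \ref{Hyperbolic alternating tangle proposition} to $\cT$: it gives that the associated solid-cylinder tangle, which is precisely $\cT_C$, is $2$-hyperbolic, with volume satisfying
\[
    \volsolid{2}{\cT_C} > \volsphere{2m}{\cT},
\]
which proves the corollary.

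The main obstacle I anticipate is bookkeeping around the case where $\cT$ is rational (i.e.\ the Montesinos tangle has a single non-integer rational block, or all but one block is integer) — here $\cT$ is not $2$-hyperbolic but is $4$- or $6$-hyperbolic, so one must be careful to invoke Proposition \ref{Hyperbolic alternating tangle proposition} with the appropriate $m$ rather than with $m = 1$. A secondary technical point is verifying that the standard projection of a non-integer Montesinos tangle really is alternating in the sense required (crossings alternating along every strand of the projection to the cross-sectional bigon); this is a standard fact about Montesinos projections, but it should be stated explicitly to justify the hypothesis of Proposition \ref{Hyperbolic alternating tangle proposition}.
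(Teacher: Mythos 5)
Your overall strategy---route through Proposition \ref{Hyperbolic alternating tangle proposition}---is the right one, but you apply it to the whole Montesinos tangle at once, and this is where the argument breaks. Proposition \ref{Hyperbolic alternating tangle proposition} requires the saucer tangle to be \emph{alternating}, and a general non-integer Montesinos tangle is not: it is a left-to-right composition of rational tangles whose fractions may have mixed signs, and in that case the standard projection is non-alternating and typically no alternating projection exists (many Montesinos links are genuinely non-alternating). What you call a ``standard fact about Montesinos projections'' is only true when all the constituent fractions have compatible sign, so your step establishing the hypothesis of the proposition fails in general. The $2m$-hyperbolicity of $\cT$ via Corollary \ref{Arborescent corollary} is fine, but without the alternating hypothesis the proposition cannot be invoked.

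The paper sidesteps this by working piecewise: decompose $\cT$ as a composition of \emph{non-integer rational} tangles (absorbing any integer blocks into their neighbors), note that each such piece is alternating by the classification of rational tangles \cite{Kauffman_classification} and is $2m$-hyperbolic for some $m$ by Corollary \ref{Arborescent corollary}, apply Proposition \ref{Hyperbolic alternating tangle proposition} to each piece separately to conclude that each corresponding solid-cylinder tangle is $2$-hyperbolic, and then use the composition machinery (Theorem \ref{Main theorem inductive}, together with $\cT_C \circ \cT'_C = (\cT \circ \cT')_C$) to conclude that $\cT_C$, being a composition of $2$-hyperbolic solid-cylinder tangles, is itself $2$-hyperbolic. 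To repair your proof you would need to either restrict to alternating Montesinos tangles or adopt this decompose-then-compose step; the global application of the proposition cannot be salvaged as written.
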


\begin{proof}
    Let $\cT$ be the corresponding Montesinos tangle in a saucer.
    Then, $\cT$ decomposes into a composition of non-integer rational tangles (by including an integer tangle on the tail end of the preceding rational tangle if necessary).
    By Corollary \ref{Arborescent corollary} these are $2m$-hyperbolic for some $m$, and by \cite{Kauffman_classification} they are alternating.
    Since $\cT_C \circ \cT'_C = (\cT \circ \cT')_C$, this implies that $\cT_C$ decomposes into $2$-hyperbolic solid cylindrical tangles.
    Then, Theorem \ref{Main theorem inductive} implies that $\cT_C$ is 2-hyperbolic, as desired.
\end{proof}


\section{Application to Other Tangle Reflection Graphs}\label{tanglereflectiongraphs}



Up to now, we have used the cyclic graph on $2n$ vertices or the $(2n, 2m)$-torus lattice graph to create links. We defined the replicants of a given tangle $\cT$ in terms of the  link obtained by alternately reflecting the tangle $\cT$ around the graph.

We now generalize the construction. Let $G$ be a finite connected $r$-valent bipartite graph with bipartition of the vertices into two sets $U$ and $V$ such that $G$ is embedded in an orientable 3-manifold $M$. Let $\bS$ a collection of surfaces in $M$, each intersecting $G$ transversely at a subset of the edges such that for each such surface $\Sigma$, there  is an orientation-reversing involution of $M$, which we call a reflection,  with fixed point set $\Sigma$. Further, assume that for each pair of vertices sharing an edge, there is a distinguished such reflection that switches the two vertices, sends $G$ back to itself, switches $U$ and $V$ and sends the collection $\bS$ back to itself. 
Further assume that in the finite group $\mathcal{G}$ generated by the collection of reflections, the only element that fixes a vertex is the trivial element.
Then we call $(M, G,\bS)$ a {\it reflection graph} of $M$.
The group $\mathcal{G}$ is fully determined as the group of isometries of $M$ generated by reflections about $\bS$.

Let $L$ be a link formed by inserting tangles $\cT_1, \cT_2, \dots, \cT_{2n}$ into the $2n$ vertices of $G$ such that the nonzero number of outgoing strands along each edge match up from the two vertices. For each such tangle $\cT$,  we can form the $G$-replicant $D^G(\cT)$ of the tangle by removing all tangles except for it and then using the reflections to reflect $\cT$ across each outgoing edge. Then we repeat the process with all subsequent edges. 

To see that the definition of the replicant is well-defined, we need to show that there are $r$ distinct edge classes at each vertex under the action of $\mathcal{G}$.  Suppose not, and that there is a nontrivial  element $\mu$ of $\mathcal{G}$ that identified two edges that share a vertex $y$. Then since $\mu$ cannot fix $y$, it must send the other endpoint of the first edge, call it $x$ to $y$ and $y$ to the other endpoint of the second edge, call it $z$. 
    
But because $y$ and $z$ share an edge, there must be a reflection $\lambda$ that sends $y$ to $z$ and $z$ to $y$. But then $\lambda \circ \mu(y) = y$. hence, $\lambda \circ \mu$ must be the identity so $\mu = \lambda^{-1} = \lambda$. But $\lambda$ sends $z$ to $y$ whereas $\mu$ sends $x$ to $y$, a contradiction.  
    
The fact there are exactly $r$ equivalence classes of edges follows by reflecting to get from one vertex to an adjacent one. Then the $r$ equivalence classes with representatives at the first vertex will generate representatives of the same classes at the next vertex. Repeating for adjacent vertices will eventually make this true at all vertices. 
    
Since the graph is bipartite, each cycle is even and this construction is well-defined. If the complement of the $G$-replicant of $\cT$ is hyperbolic, we say $\cT$ is a $G$-hyperbolic tangle with volume $\vol(D^G(\cT))/ |G|$.

If we have a combinatorial proof that a collection of $2n$ copies of the $G$-replicants for the tangles can be cut and reglued, always along totally geodesic surfaces, in order to obtain $2n$ copies of $M \setminus L$, then we know that the analog of Theorem \ref{Bracelet theorem} applies. That is to say that under these circumstances, if each tangle we insert at the vertices of the graph is $G$-hyperbolic, then the resulting link is hyperbolic with volume greater than the sum of $1/2n$ times the volumes of the $G$-replicas. Under these circumstances, we say that $(M, G, \bS)$ is a {\it tangle reflection graph} for $M$.    

\begin{theorem} \label{tangle reflection graph} Let $(M, G, \bS)$ be a tangle reflection graph with $2n$ vertices in a compact orientable 3-manifold $M$ with possible torus boundaries.
If there is a orientation-reversing symmetry $\phi$ of $M$ with fixed point set an orientable  surface $S$, such that $G$ lies in $S$ and $\phi$ preserves each surface in $\bS$,  then there exists a new tangle reflection graph $(M, G', \bS')$ for $M$, where $G'$ is the product of $G$ with the graph $P_1$ consisting of one edge and two vertices and $\bS'$ is the collection $\bS$ together with the surface $S$.
\end{theorem}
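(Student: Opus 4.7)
The plan is to verify separately that $(M, G', \bS')$ satisfies (i) the definition of a reflection graph of $M$, and (ii) the combinatorial cut-and-paste property required to be a tangle reflection graph. Most of the work for (i) is routine given the product structure $G' = G \times P_1$: connectedness, $(r+1)$-valency, and bipartiteness of $G'$ all follow from the corresponding facts for $G$, with bipartition given by $(U \times \{0\}) \cup (V \times \{1\})$ versus $(V \times \{0\}) \cup (U \times \{1\})$. For the distinguished reflection associated to each edge of $G'$, I would use $\sigma_e \in \mathcal{G}$ for the horizontal edges inherited from $G$, and $\phi$ itself for the new perpendicular edges. The hypothesis $\phi(\Sigma) = \Sigma$ for each $\Sigma \in \bS$ gives $\phi \sigma_\Sigma \phi^{-1} = \sigma_\Sigma$, since both sides are the reflection about the same surface, so $\mathcal{G}' = \mathcal{G} \times \langle \phi \rangle$. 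In particular, every $\sigma \in \mathcal{G}$ fixes $S$ setwise and so preserves the product decomposition $G' = G \times \{0, 1\}$. Any element $g \phi^k$ fixing a vertex $(v, i)$ must therefore have $k = 0$, otherwise the second coordinate flips, and $g$ must fix $v$, which by the hypothesis on $(M, G, \bS)$ forces $g$ to be trivial.

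For the combinatorial cut-and-paste requirement (ii), the plan is to stratify the scheme into two stages, exploiting the direct product $\mathcal{G}' = \mathcal{G} \times \langle \phi \rangle$. Given $4n$ tangles $\{\cT_{(v, i)}\}$ placed at the vertices of $G'$ to form a link $L' \subset M$, each of the $4n$ $G'$-replicants carries the full $\mathcal{G}'$-symmetry, so by Theorem \ref{fixed surface lemma} the surface $S$ is totally geodesic in each replicant. I would first cut each replicant along $S$ to produce $8n$ half-replicants, each a manifold with distinguished boundary $S$ that carries a $\mathcal{G}$-symmetric arrangement of a tangle along one copy of $G$. These halves are combinatorially of the same type, and I would invoke a version of the $(M, G, \bS)$ cut-and-paste scheme on each side of $S$ independently. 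After rearrangement, pairing the halves back across $S$ yields $4n$ copies of $M \setminus L'$.

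The principal obstacle will be verifying that the moves of the $(M, G, \bS)$ scheme performed above $S$ match those performed below $S$ at each step, so that the halves remain compatible along $S$ throughout and the combined scheme proceeds along mutually compatible totally geodesic surfaces. This rests on the commutation relation $\phi \sigma = \sigma \phi$ for $\sigma \in \mathcal{G}$, which allows the cut-and-paste below $S$ to be obtained from the one above $S$ via conjugation by $\phi$, preserving totally geodesic status at every intermediate stage.
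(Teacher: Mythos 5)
Your verification of part (i) is fine and somewhat more detailed than the paper's own treatment; the identification $\mathcal{G}' = \mathcal{G} \times \langle \phi \rangle$ and the fixed-vertex check are correct. The gap is in part (ii). The hypothesis that $(M,G,\bS)$ is a tangle reflection graph is a black box: it asserts only that \emph{some} sequence of cuts and regluings along totally geodesic surfaces converts the $2n$ $G$-replicants into $2n$ copies of $M \setminus L$, and it gives no control whatsoever over the intermediate manifolds of that scheme or over the position of its cut surfaces relative to $S$. Your plan --- cut along $S$ first, then run the scheme ``on each side of $S$ independently'' while keeping the halves compatible along $S$ at every stage --- needs exactly that missing control: you would have to know that the scheme's surfaces can be chosen transverse to $S$ and $\phi$-equivariantly, and that each intermediate step is a legitimate application of Theorem \ref{Cutting and pasting theorem} to manifolds with totally geodesic boundary (the halves are not closed-up copies of $M$, and the surfaces of $\bS$ meet $S$, so the restricted cuts are along surfaces with boundary on $S$). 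The relation $\phi\sigma = \sigma\phi$ lives in the reflection group and says nothing about the scheme's intermediate stages, so the ``principal obstacle'' you flag is not resolved by it; as written, this step fails.

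The paper avoids the problem by never cutting along $S$ until the very end. It forms the two $\phi$-symmetric auxiliary links $L''$ (the upper tangles $\cT_1,\dots,\cT_{2n}$ together with their $\phi$-reflections below $S$) and $L'''$ (the lower tangles together with their reflections above $S$). The $G'$-replicant of an upper tangle $\cT_i$ is precisely the $G$-replicant of the doubled tangle $(\cT_i,\phi(\cT_i))$ of $L''$, so the black-box scheme for $(M,G,\bS)$, applied to doubled tangles, converts the $2n$ upper $G'$-replicants into $2n$ copies of $M\setminus L''$, and likewise the lower ones into $2n$ copies of $M\setminus L'''$ --- all operations on whole copies of $M$. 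Only then is $S$ touched: it is totally geodesic in each of these manifolds by Theorem \ref{fixed surface lemma} applied to $\phi$, and a single final cut along $S$ followed by regluing upper halves of the $L''$-copies to lower halves of the $L'''$-copies produces $4n$ copies of $M\setminus L'$. If you want to keep your ``cut along $S$ first'' instinct, the repairable version is: cut each $G'$-replicant along $S$ and immediately cross-glue the halves to form two copies of the $G$-replicant of each doubled tangle $(\cT_i,\cT_{2n+i})$ of $L'$ itself, and only then apply the black-box scheme to these whole manifolds. In no version can the scheme be run on half-manifolds.
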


\begin{proof}
Push $G$ off $S$ in a choice of normal direction, and call that copy of $G$ the upper copy. Take a second copy of $G$ that is the reflection of $G$ by $\phi$, and call that the lower copy. Then we connect the corresponding vertices of the two copies of $G$ by edges that are perpendicular to $S$, which we call {\it vertical edges}. This yields an embedding of $G'$ in $M$ such that $(M, G', \bS')$ is a reflection graph.

It remains to show that $(M, G', \bS')$ is a tangle reflection graph. Let $L'$ be a link formed from $(G', \bS')$, with tangles $\cT_1, \dots \cT_{2n}$ in the upper copy of $G$ and tangles $\cT_{2n+1} , \dots, \cT_{4n}$ in the lower copy of $G$. 

We form two new links $L''$ and $L'''$ from $(G',\bS')$ such that the first consists of $\cT_1, \dots \cT_{2n}$ in the upper copy of $G$, and then their reflections across the vertical edges in the lower copy of $G$. The second consists of $\cT_{2n+1} , \dots, \cT_{4n}$ in the lower copy of $G$ and then their reflections across the vertical edges in the upper copy of $G$. Treating each pair of tangles along a vertical edge as a single tangle, we know that because $(G, \bS)$ is a tangle reflection graph, we can form the $G$-replicants of each of the now $2n$ tangles in $L''$ and then by cutting along totally geodesic surfaces and regluing, we can create $2n $ copies of $L''$ in $M$. Similarly, from the $2n$-replicants of the tangles in $L'''$ we can create $2n$ copies of $L'''$ in $M$. In each case, the sum of the volumes of the $2n$ copies of the complements of the links have volumes at least as great as the sum of the volumes of the $G$-replicants. But each of these links has a reflection surface such that reflecting across it switches the two copies of each $\cT_i$ within it. So we can cut the manifolds open along the totally geodesic surfaces and reglue to obtain $4n$ copies of $L$ in $M$. Hence the theorem follows.
\end{proof}

The following examples are immediate from the theorem. 
Our discussion of bracelet links showed that a cycle graph $C_{2n}$ with $2n$ vertices is a tangle reflection graph in $S^3$.
That graph can be isotoped to live on a sphere $S$ that is the fixed-point set for a reflection symmetry of the 3-sphere. Therefore Theorem \ref{tangle reflection graph} implies that we obtain a prism graph with $4n$ vertices that is a tangle reflection graph.  Note that each vertex in the graph is trivalent so our tangles have three bundles of strands exiting. 
We can reapply Theorem  \ref{tangle reflection graph} to the prism graph, since it also can be isotoped to live on a sphere. 
Thus, we generate a hyperprism graph of $8n$ vertices, as in Figure \ref{hyperprism}, which must also be a tangle reflection graph. Of course, this last example is also a $(2n, 4)$-torus lattice graph, so we already knew that it was a tangle reflection graph. 

\begin{figure}[htbp]
    \centering
    \includegraphics[width=.7\textwidth]{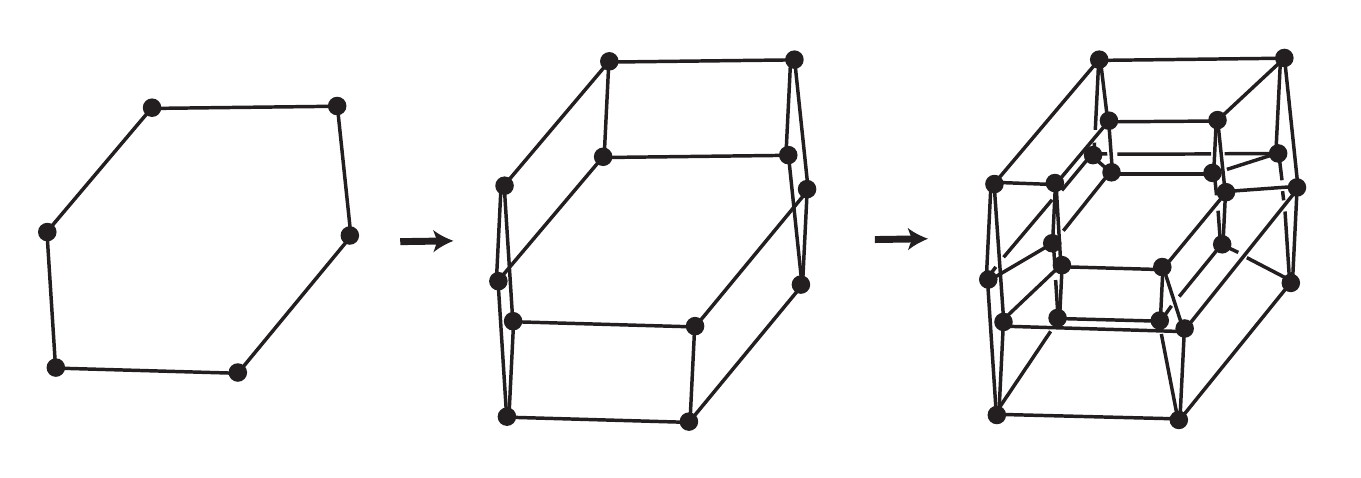}
    \caption{That the cycle graph is a tangle reflection graph in $S^3$ implies the prism graph is also, which implies the hyperprism graph is as well.}
    \label{hyperprism}
\end{figure}

Unlike what happened with the cycle graph, in these graphs, we can have any positive number of strands running along an edge, including a single strand.

Another example is generated by the $(2m, 2n)$-torus lattice graph, denoted $G$, which we know to be a tangle reflection graph. If we take that graph to lie in either the intermediate torus in $T \times (0,1)$  or to lie in the torus splitting $S^2 \times S^1$ into two solid tori, then there is a reflection of the manifold that preserves the torus and therefore the graph. Hence, Theorem \ref{tangle reflection graph} implies that the graph obtained as a product of $G$ with $P_1$ is also a tangle reflection graph in those manifolds.

As  further potential tangle reflection graphs, we can look to the 1-skeleta of the Archimedean solids.  
 Three of these solids are relevant to this construction since the only cycles in the 1-skeleta have even length, namely the truncated octahedron, truncated cuboctahedron, and truncated icosidodecahedron. 

We observe that in $S^3$, the graphs determined by the $1$-skeleta of these three solids are reflection graphs. We expect that volume bounds similar to those proven for simpler configurations would hold for links in $S^3$ with tangles at the vertices of these Archimedean solids, and connecting strands along the edges. However, the combinatorics quickly becomes difficult. 

But note that if these can be shown to be tangle reflection graphs, then Theorem \ref{tangle reflection graph} implies that their products with $P_1$ would also be tangle reflection graphs.

\section{Volume Calculations} \label{volumecalculations}

In this section we give tables of volumes of tangles living in various objects, all calculated using the SnapPy computer program \cite{SnapPy}.

A \emph{rational square tangle} is defined to be a rational tangle projected onto a square, having one endpoint at the midpoint of each edge of the square. Table \ref{rational square tangle} gives the volumes of some such tangles in a variety of contexts. The first column specifies the object in which the rational square tangle is embedded. The second column specifies the manifold $M$. The third column specifies the $m$ for the $(2, 2m)$-replicant we perform on the tangle when finding its volume.  The subsequent columns correspond to particular rational tangles given in Conway notation, where the first four are integer tangles.

Note that on the standardly embedded torus, $(2, 2m)$-replicant means $2$ columns and $2m$ rows.  We do not give the $(2m,2)$-replicant volumes as in $T \times I$, $\mbox{vol}_{T \times I}^{(2m, 2n)}(\cT)= \mbox{vol}_{T \times I}^{(2n, 2m)}(\cT)$. In  $D^2 \times S^1$, $\mbox{vol}_{D^2 \times S^1}^{(2m, 2)}(\cT) = m \mbox{vol}_{D^2 \times S^1}^{(2, 2)}(\cT)$. And in $S^3$, $\mbox{vol}_{S^3}^{(2m, 2n)}(\cT)$ gives the same volume as  $\mbox{vol}_{S^3}^{(2n, 2m)}(\cT')$, where $\cT'$ is obtained from $\cT$ by rotating $90^\circ$ and then changing all crossings. In the case of the tangles listed, this generates a tangle that is the reflection of the initial tangle, and hence the volumes are equal. 

As a note, caution should be exercised. For instance, in $S^3$ and $D^2 \times S^1$, both the integer 2-tangle and the integer 3-tangle are not $(2m,2)$-hyperbolic for any positive integer $m$ for one choice of crossings in the tangle and are $(2m,2)$-hyperbolic for all positive $m$ for the other choice of crossings. See Example \ref{tettangleexample} for the case where the 2-tangle is $(2m,2)$-hyperbolic for all positive $m$. In $S^2 \times S^1$, the integer 2-tangle and 3-tangle are not $(2m,2)$-hyperbolic for any choice of crossings.



\begin{table}[htbp]
  \centering
  \caption{$\mbox{vol}_M^{(2, 2m)}$ of rational square tangles}
    \begin{tabular}{|c|c|c||r|r|r|r|r|}
    \hline
    Embedded in a & Link is in $M$ & $m$ & \multicolumn{1}{|c|}{2} & \multicolumn{1}{|c|}{3} & \multicolumn{1}{|c|}{4} & \multicolumn{1}{|c|}{5} & \multicolumn{1}{|c|}{2 1} \\ \hline \hline
    
    cube & $T \times I$ & 1 & 4.3692852 & 6.26446133 & 7.37277416 & 8.05714498 & 7.48167784 \\ \hline
    
    \multirow{3}{*}{triangular prism} & \multirow{3}{*}{$D^2 \times S^1$} & 1 & 3.66386238 & 5.07470803 & 5.82605812 & 6.26446133 & 6.57223395 \\ 
    & & 2 & 4.20148819 & 5.97374285 & 6.98517306 & 7.59952104 & 7.26467498 \\
    & & 3 & 4.2953372 & 6.92143228 & 7.20028164 & 7.85266168 & 7.38599241 \\ \hline
    
    \multirow{3}{*}{tetrahedron} & \multirow{3}{*}{$S^3$} & 1 & 3.13223067 & 4.3062076 & 4.90649786 & 5.24442274 & 5.81283664 \\
    & & 2 & 3.538345513 & 4.89348601 & 5.60919297 & 6.02345588 & 6.39075686 \\
    & & 3 & 3.60861463 & 5.53828532 & 5.73062363 & 6.1583781 & 6.4921938 \\ \hline
    
    \end{tabular}
  \label{rational square tangle}
\end{table}

 We may define a \emph{rational thickened-cylinder tangle} to be obtained by having one endpoint of a rational tangle at the midpoint of each side of a square, thickening the square and then gluing the north and south  pair of opposite sides of the thickened square. See Table \ref{integer cylindrical tangles} for $\mbox{vol}_{T \times I}^2$ of example integer thickened-cylinder tangles (c.f. Definition \ref{Thickened torus tangle definition}). Note that when we have rational thickened-cylinder tangles, since the tangles only intersect the left and right boundaries of the thickened cylinder once, these left and right boundaries become once-punctured annuli. Therefore they are totally geodesic in the hyperbolic structure of the resulting link complement in the thickened torus. Hence, the 2-volumes of the constituent tangles add to give the volume of the resulting link exactly.

\begin{table}[htbp]
  \centering
  \caption{$\mbox{vol}_{T \times I}^2$ of integer cylindrical tangles}
    \begin{tabular}{|c|c|c|c|}
    \hline
    2 & 3 & 4 & 5 \\ \hline \hline
    5.33348957 & 7.32772475 & 8.35550215 & 8.92931782 \\ \hline
    \end{tabular}
  \label{integer cylindrical tangles}
\end{table}

We may also define a \emph{reciprocal integer saucer tangle} to be an integer tangle with NE and NW strands going out one side of a square cross section of a saucer, and SE and SW strands going out the other. 
Note that $D^{2m}(p/q) = D^{2m}(-p/q)$, so it suffices to consider these for nonnegative integers.
The reciprocal saucer tangle $1/0 = \infty$ and $1/1 = 1$ are entirely non-hyperbolic.
Further, Corollary \ref{Arborescent corollary} implies that $1/2 = Q_{\text{clasp}}$ is principally $6$-hyperbolic and $1/n$ is principally $4$-hyperbolic for all $n > 2$.

Table \ref{reciprocal integer saucer tangle} gives the beginning of a table of $\mbox{vol}^{2m}_{S^3}$ for integer reciprocal saucer tangles. 
Note that $\mbox{vol}^{2m}_{S^3}(\cT)$ of the  reciprocal integer saucer tangle $1/2$ corresponds to $\frac{1}{2n}$ the volume of the minimally twisted $2m$-chain link.

\begin{table}[htbp]
  \centering
  \caption{$\mbox{vol}^{2m}_{S^3}$ of reciprocal integer saucer tangles}
    \begin{tabular}{|c||r|r|r|r|}
    \hline
    $m$ & \multicolumn{1}{|c|}{1/2} & \multicolumn{1}{|c|}{1/3} & \multicolumn{1}{|c|}{1/4} & \multicolumn{1}{|c|}{1/5} \\ \hline \hline
   
    2 & 0 & 3.13223067 & 4.30620760 & 4.90649786 \\
    3 & 2.44257492 & 4.45025692 & 5.38411452 & 5.86524434 \\ 
    4 &  3.01152301 &  4.85098130 &   5.72360375 &  6.17292141\\
    5 &  3.25515403 & 5.28582982 &   5.87549250 & 6.31134186\\
    \hline
    \end{tabular}
  \label{reciprocal integer saucer tangle}
\end{table}

We note that for a reciprocal integer saucer tangle,  by Proposition \ref{Hyperbolic alternating tangle proposition},  $\vol\prn{D^{2m}(\cT)}$ is less than the volume $\volsolid{2}{\cT_C}$. In fact, as $m$ increases, the volumes will approach the upper bound from below. This follows because $\volsolid{2}{\cT_C} = \volsolid{2m}{\cT_C}$ since $D^{2m}(\cT_C)$ is an $m$-fold cyclic cover of $D^2(\cT_C)$. And $D^{2m}(\cT)$ comes from surgery on $D^{2m}(\cT_C)$, with surgery coefficients increasing with $m$.

Hence, in Table \ref{limitvolumes}, we include some of these limit volumes as well. So each column in Table \ref{reciprocal integer saucer tangle} has values approaching the corresponding entry in Table \ref{limitvolumes}.  Note that the limit volumes in Table \ref{limitvolumes} themselves limit to 7.32772474 from below, which is the volume of the Borromean rings. So all $2m$-replicants of reciprocal integer tangles have volume less than 7.32772474.

\begin{table}[htbp] 
  \centering
  \caption{$\volsolid{2}{\cT_C}$ for integer reciprocal tangles}
    \begin{tabular}{|c|c|c|c|}
    \hline
   1/2 & 1/3 & 1/4 & 1/5 \\ \hline
    3.66386237 & 5.33489567 & 6.13813879  & 6.5517432897 \\ \hline
    \end{tabular}
  \label{limitvolumes}
\end{table}

\printbibliography

@article {Adams_thrice_punctured,
    AUTHOR = {Adams, C.},
     TITLE = {Thrice-punctured spheres in hyperbolic {$3$}-manifolds},
   JOURNAL = {Trans. Amer. Math. Soc.},
  FJOURNAL = {Transactions of the American Mathematical Society},
    VOLUME = {287},
      YEAR = {1985},
    NUMBER = {2},
     PAGES = {645--656},
 %     ISSN = {0002-9947},
 %  MRCLASS = {57N10 (57M25)},
 % MRNUMBER = {768730},
  %     DOI = {10.2307/1999666},
  %     URL = {https://doi.org/10.2307/1999666},
}

@article {Adams_generalized_augmented,
    AUTHOR = {Adams, C.},
     TITLE = {Generalized augmented alternating links and hyperbolic
              volumes},
   JOURNAL = {Algebr. Geom. Topol.},
  FJOURNAL = {Algebraic \& Geometric Topology},
    VOLUME = {17},
      YEAR = {2017},
    NUMBER = {6},
     PAGES = {3375--3397},
%      ISSN = {1472-2747},
%   MRCLASS = {57M50 (57M25)},
%  MRNUMBER = {3709649},
%MRREVIEWER = {Mattia Mecchia},
 %      DOI = {10.2140/agt.2017.17.3375},
 %      URL = {https://doi.org/10.2140/agt.2017.17.3375},
}

@article {selectalternating,
AUTHOR = {SMALL2020},
TITLE = {Hyperbolicity of tangles},
Journal = {In preparation},
Year = {2022}
}

@article {Agol-Storm-Thurston,
    AUTHOR = {Agol, I. and Storm, P.. and Thurston, W.P.},
     TITLE = {Lower bounds on volumes of hyperbolic {H}aken 3-manifolds},
      NOTE = {With an appendix by Nathan Dunfield},
   JOURNAL = {J. Amer. Math. Soc.},
  FJOURNAL = {Journal of the American Mathematical Society},
    VOLUME = {20},
      YEAR = {2007},
    NUMBER = {4},
     PAGES = {1053--1077},
 %     ISSN = {0894-0347},
 %  MRCLASS = {53C44 (57M50 57N10)},
%  MRNUMBER = {2328715},
%MRREVIEWER = {Colin C. Adams},
 %      DOI = {10.1090/S0894-0347-07-00564-4},
 %      URL = {https://doi.org/10.1090/S0894-0347-07-00564-4},
}

@book{Bonahon,
    AUTHOR = {Bonahon, F.},
     TITLE = {Arborescent knots and links: Encyclopedia of Knot Theory},
     PUBLISHER = {CRC Press},
     CHAPTER = {17},
     YEAR = {2020},
     PAGES = {121--146}
}

@article {BM,
    AUTHOR = {Burde, G. and Murasugi, K.},
     TITLE = {Links and Seifert fibered spaces},
     JOURNAL = {Duke Math. J.},
     VOLUME = {37}, 
     Number ={1}, 
     YEAR = {1970},
     PAGES = {89--93}
}

@article {CFW,
    AUTHOR = {Calegari, D. and Freedman, M. and Walker, K.},
     TITLE = {Positivity of the universal pairing for 3-manifolds},
     JOURNAL = {Jour. A.M.S.},
     VOLUME = {23}, 
     Number ={1}, 
     YEAR = {2010},
     PAGES = {107--188}
}

@inproceedings {Conway,
    AUTHOR = {Conway, J. H.},
     TITLE = {An enumeration of knots and links, and some of their algebraic
              properties},
 BOOKTITLE = {Computational {P}roblems in {A}bstract {A}lgebra ({P}roc.
              {C}onf., {O}xford, 1967)},
     PAGES = {329--358},
 PUBLISHER = {Pergamon, Oxford},
      YEAR = {1970},
   MRCLASS = {55.20},
  MRNUMBER = {0258014},
MRREVIEWER = {H. E. Debrunner},
}

@article {FP,
    AUTHOR = {Finlinson, K. and Purcell, J.},
     TITLE = {Volumes of Montesinos links},
   JOURNAL = {Pac. J. Math.},
   VOLUME = {282},
      YEAR = {2016},
    NUMBER = {1},
     PAGES = {63--105},
}

@article {FKP,
    AUTHOR = {Futer, D. and Kalfagiaqnni, E. and Purcell, J.},
     TITLE = {Guts of surfaces and the colored Jones polynomial}, JOURNAL = {Lecture Notes in Mathematics},
     VOLUME = {2069}, 
     PUBLISHER = {Springer,Heidelberg}, 
     YEAR = {2013},
}

@article {FKP2,
    AUTHOR = {Futer, D. and Kalfagianni, E. and Purcell, J.},
     TITLE = {Dehn filling, volume and the Jones polynomial}, 
     JOURNAL = {J. Differential Geometry},
     VOLUME = {78}, 
     YEAR = {2008},
     PAGES = {429--464},
}

@article {FKP3,
    AUTHOR = {Futer, D. and Kalfagianni, E. and Purcell, J.},
     TITLE = {Jones polynomials, volume and essential knot surfaces: a survey}, 
     JOURNAL = {Knots in Poland. III. Part 1, Banach
Center Publ., Polish Acad. Sci. Inst. Math., Warsaw},
     VOLUME = {100}, 
     YEAR = {2014},
     PAGES = {51--77},
}

@article {Futer,
    AUTHOR = {Futer, D. and Gu\'{e}ritaud, F.},
     TITLE = {Angled decompositions of arborescent link complements},
   JOURNAL = {Proc. Lond. Math. Soc. (3)},
  FJOURNAL = {Proceedings of the London Mathematical Society. Third Series},
    VOLUME = {98},
      YEAR = {2009},
    NUMBER = {2},
     PAGES = {325--364},
 %     ISSN = {0024-6115},
 %  MRCLASS = {57M25 (57N10)},
 % MRNUMBER = {2481951},
%MRREVIEWER = {Mattia Mecchia},
%       DOI = {10.1112/plms/pdn033},
%       URL = {https://doi.org/10.1112/plms/pdn033},
}

@article {Giambrone,
    AUTHOR = {Giambrone, A.},
     TITLE = {Combinatorics of link diagrams and volume},
     JOURNAL = {J. Knot Theory Ramifications},
     VOLUME = {24}, 
     Number ={1}, 
     YEAR = {2015},
     PAGES = {1550001,21}
}

@book{Hatcher,
    AUTHOR = {Hatcher, A.},
     TITLE = {Notes on basic 3-manifold topology},
     JOURNAL = {},
     PUBLISHER = {https://pi.math.cornell.edu/~hatcher/3M/3M.pdf},
     CHAPTER = {},
     YEAR = {},
     PAGES = {}
}

@article {Kauffman_classification,
    AUTHOR = {Kauffman, L. and Lambropoulou, S.},
     TITLE = {On the classification of rational tangles},
   JOURNAL = {Adv. in Appl. Math.},
  FJOURNAL = {Advances in Applied Mathematics},
    VOLUME = {33},
      YEAR = {2004},
    NUMBER = {2},
     PAGES = {199--237},
 %     ISSN = {0196-8858},
 %  MRCLASS = {57M25},
%  MRNUMBER = {2074397},
%MRREVIEWER = {Eriko Hironaka},
%       DOI = {10.1016/j.aam.2003.06.002},
%       URL = {https://doi.org/10.1016/j.aam.2003.06.002},
}

@book {Klingenberg,
    AUTHOR = {Klingenberg, W.},
     TITLE = {Riemannian geometry},
    SERIES = {de Gruyter Studies in Mathematics},
    VOLUME = {1},
 PUBLISHER = {Walter de Gruyter \& Co., Berlin-New York},
      YEAR = {1982},
     PAGES = {x+396},
      ISBN = {3-11-008673-5},
   MRCLASS = {53-02 (53C22 58-02 58E10 58F17)},
  MRNUMBER = {666697},
MRREVIEWER = {Alan West},
}

@article {Lackenby,
    AUTHOR = {Lackenby, M.},
     TITLE = {The volume of hyperbolic alternating link complements},
      NOTE = {With an appendix by Agol, I. and Thurston, D.},
   JOURNAL = {Proc. London Math. Soc. (3)},
  FJOURNAL = {Proceedings of the London Mathematical Society. Third Series},
    VOLUME = {88},
      YEAR = {2004},
    NUMBER = {1},
     PAGES = {204--224},
  %    ISSN = {0024-6115},
  % MRCLASS = {57M25 (57M50 57N10)},
 %MRNUMBER = {2018964},
%MRREVIEWER = {Carlo Petronio},
%       DOI = {10.1112/S0024611503014291},
 %      URL = {https://doi.org/10.1112/S0024611503014291},
}

@article{NR,
       author = {W.D. Neumann and A. Reid},
       title = {Arithmetic of hyperbolic manifolds},
       journal = {TOPOLOGY '90, Proceedings of the Research Semester in Low Dimensional Topology at Ohio State University},
       publisher = {De Gruyter Verlag},
       year = {1992},
       pages = {273–310}}

@article{CKP2,
       author = {A. Champanerkar and I. Kofman and J. Purcell},
       title = {Right-angled polyhedra and alternating links},
        year={2019},
    eprint={1910.13131},
    archivePrefix={arXiv},
    primaryClass={math.GT}
}

@misc{SnapPy,
     author={Culler, M. and Dunfield, N. and Goerner,
     M. and Weeks, J.R.},
     title={Snap{P}y, a computer program for studying the geometry and topology of $3$-manifolds},
     howpublished={Available at \url{http://snappy.computop.org} (DD/MM/YYYY)},
}

@article {Thurston_kleinian,
    AUTHOR = {Thurston, W.P.},
     TITLE = {Three-dimensional manifolds, {K}leinian groups and hyperbolic
              geometry},
   JOURNAL = {Bull. Amer. Math. Soc. (N.S.)},
  FJOURNAL = {American Mathematical Society. Bulletin. New Series},
    VOLUME = {6},
      YEAR = {1982},
    NUMBER = {3},
     PAGES = {357--381},
 %     ISSN = {0273-0979},
 %  MRCLASS = {57N10 (20H15 30F40 57M35 57M40 57S17)},
 % MRNUMBER = {648524},
%MRREVIEWER = {Klaus Johannson},
 %      DOI = {10.1090/S0273-0979-1982-15003-0},
 %      URL = {https://doi.org/10.1090/S0273-0979-1982-15003-0},
}

@article {Tsau,
    AUTHOR = {Tsau, C.},
     TITLE = {Incompressible surfaces in the knot manifolds of torus knots},
     JOURNAL = {Topology},
     VOLUME = {33}, 
     Number ={1}, 
     YEAR = {1994},
     PAGES = {197--201}
}

@article {Volz,
    AUTHOR = {Reif Volz, K.},
     TITLE = {Hyperbolicity of arborescent tangles and arborescent links},
   JOURNAL = {Topology Appl.},
  FJOURNAL = {Topology and its Applications},
    VOLUME = {156},
      YEAR = {2009},
    NUMBER = {5},
     PAGES = {963--978},
 %     ISSN = {0166-8641},
  % MRCLASS = {57M25 (57M50)},
 % MRNUMBER = {2498930},
  %     DOI = {10.1016/j.topol.2008.11.014},
  %     URL = {https://doi.org/10.1016/j.topol.2008.11.014},
}

@article {Wu,
    AUTHOR = {Wu, Y.-Q.},
     TITLE = {Dehn surgery on arborescent knots},
     JOURNAL = {J. Differential Geom.},
     VOLUME = {43}, 
     Number ={1}, 
     YEAR = {1996},
     PAGES = {171--197}
}


\end{document}